\protected\def\ccell#1#{%
  \kern-\fboxsep
  \@ccell{#1}%
}
\def\@ccell#1#2#3{%
  \colorbox#1{#2}{#3}%
  \kern-\fboxsep
}
\numberwithin{equation}{section}
\numberwithin{table}{section}
\let\c@equation\c@table
\newtheorem{theorem}{Theorem}[section]
\newtheorem{corollary}[theorem]{Corollary}
\newtheorem{lemma}[theorem]{Lemma}
\newtheorem{proposition}[theorem]{Proposition}
\newtheorem{question}[theorem]{Question}
\theoremstyle{definition}
\newtheorem{definition}[theorem]{Definition}
\newtheorem{assumption}[theorem]{Assumption}
\newtheorem{remark}[theorem]{Remark}
\newtheorem{example}[theorem]{Example}
\DeclareMathOperator{\id}{id}
\DeclareMathOperator{\pr}{pr}
\DeclareMathOperator{\Aut}{Aut}
\DeclareMathOperator{\coker}{coker}
\DeclareMathOperator{\im}{im}
\DeclareMathOperator{\Spec}{Spec}
\DeclareMathOperator{\Hom}{Hom}
\DeclareMathOperator{\Ext}{Ext}
\DeclareMathOperator{\OHilb}{Hilb}
\DeclareMathOperator{\GL}{GL}
\DeclareMathOperator{\SL}{SL}
\DeclareMathOperator{\reg}{reg}
\DeclareMathOperator{\depth}{depth}
\DeclareMathOperator{\tanspace}{Tan}
\DeclareMathOperator{\charr}{char}
\DeclareMathOperator{\Sym}{S}%
\DeclareMathOperator{\DP}{\Gamma}%
\DeclareMathOperator{\Sch}{\mathbb{S}}%
\newcommand{\tensor}{\otimes}
\newcommand{\into}{\hookrightarrow}
\newcommand{\hook}{\,\lrcorner\,}
\newcommand{\kk}{\Bbbk}%
\newcommand{\KK}{\mathbb{K}}%
\newcommand{\spann}[1]{\left\langle #1 \right\rangle}
\newcommand{\xx}{\mathbf{x}}%
\newcommand{\yy}{\mathbf{y}}%
\renewcommand{\aa}{\mathbf{a}}%
\newcommand{\bb}{\mathbf{b}}%
\newcommand{\Gmult}{\mathbb{G}_{m}}%
\newcommand{\Gadd}{\mathbb{G}_{a}}%
\newcommand{\Gx}{\mathbb{G}_{x}}%
\newcommand{\Gy}{\mathbb{G}_{y}}%
\newcommand{\Gdiag}{\mathbb{G}_{xy}}%
\newcommand{\Gres}{\mathbb{G}_{\mathrm{res}}}%
\newcommand{\Gmultbar}{\overline{\mathbb{G}}_{m}}%
\newcommand{\Hshort}{\mathcal{H}}%
\newcommand{\Hplus}{\Hshort^+}%
\newcommand{\DS}{S}%
\newcommand{\DT}{T}%
\newcommand{\DI}{I}%
\newcommand{\DR}{R}%
\newcommand{\DJJ}{J}%
\newcommand{\OO}{\mathcal{O}}%
\newcommand{\OOhat}{\hat{\mathcal{O}}}%
\newcommand{\mm}{\mathfrak{m}}%
\newcommand{\nn}{\mathfrak{n}}%
\newcommand{\pp}{\mathfrak{p}}%
\newcommand{\ppx}{\mathfrak{p}_x}%
\newcommand{\ppy}{\mathfrak{p}_y}%
\newcommand{\Hshortkk}{\Hshort_{\kk}}%
\newcommand{\HshortZZ}{\Hshort_{\mathbb{Z}}}%
\newcommand{\HshortQQ}{\Hshort_{\mathbb{Q}}}%
\newcommand{\HshortFp}{\Hshort_{\mathbb{F}_p}}%
\newcommand{\HshortK}{\Hshort_{\KK}}%
\newcommand{\HshortZZflag}{\Hshort{\mathrm{flag}}_{\mathbb{Z}}}%
\newcommand{\Hpluskk}{\Hshort^+_{\kk}}%
\newcommand{\HplusZZ}{\Hshort^+_{\mathbb{Z}}}%
\newcommand{\BBname}{Bia{\l{}}ynicki-Birula}%
\newcommand{\DSplus}{\DS_+}%
\newcommand{\singula}{\mathfrak{S}}%
\newcommand{\TNTframe}{TNT frame}
\newcommand{\pts}{\mathrm{pts}}%
\newcommand{\Hilbsixteen}{\OHilb_{\pts}(\mathbb{A}^{16}_{\mathbb{Z}})}%
\newcommand{\Hilbn}{\OHilb_{\pts}(\mathbb{A}^{n}_{\mathbb{Z}})}%
\newcommand{\HilbnCC}{\OHilb_{\pts}(\mathbb{A}^{n}_{\mathbb{C}})}%
\newcommand{\Hilbnkk}{\OHilb_{\pts}(\mathbb{A}^{n}_{\kk})}%
\newcommand{\fixedptemb}{i}%
\begin{document}

\title{Pathologies on the Hilbert scheme of points}
\author{Joachim Jelisiejew}
\date{\today{}}
\thanks{Institute of Mathematics, Polish Academy of Sciences and Institute of Mathematics, Faculty of
    Mathematics, Informatics and Mechanics, University of
    Warsaw. Email:
    \url{jjelisiejew@mimuw.edu.pl}.  Partially supported by NCN grant
2017/26/D/ST1/00913.}

\subjclass[2010]{14C05, 14L30, 14B12, 13D10, 14B07}
\begin{abstract}
    We prove that the Hilbert scheme of points on a higher dimensional affine
    space is non-reduced and has components lying entirely in characteristic
    $p$ for all primes $p$.
    In fact, we show that Vakil's Murphy's Law holds up to retraction for this
    scheme. Our main tool is a generalized version of the \BBname{} decomposition.
\end{abstract}
\maketitle

\section{Introduction}
Vakil defined \emph{singularity type} as an equivalence
class of pointed schemes
under the relation generated by $(X, x) \sim (Y, y)$ if there is a
smooth morphism $(X, x)\to (Y, y)$. He defined that \emph{Murphy's Law
holds} for a moduli space if every singularity type of finite type over
$\mathbb{Z}$ appears on this space. In~\cite{Vakil_MurphyLaw} Vakil gave numerous
examples when this happens. A notable item missing in his list is the
Hilbert scheme of points. In fact little was known about its singularities:
for example the following classical questions raised by Fogarty and Hartshorne
were open.
\begin{question}[{\cite[p.~520]{fogarty}, \cite[Problem~1.25]{aimpl}, \cite{CEVV}}]\label{que:nonreducedness}
    Is $\Hilbn$ reduced for all $n$?
    Is $\HilbnCC$ reduced for all $n$?
\end{question}
\begin{question}[{\cite[p.~148]{HarDeform}}, {\cite[Problem~1.2]{aimpl}},
    \cite{Langer__zerodimensional}]\label{que:charp}
    Do all finite $\kk$-schemes, for a finite field $\kk$, lift to characteristic zero?
\end{question}
Question~\ref{que:nonreducedness} was completely open, and
$\OHilb_4(\mathbb{A}^3_{\mathbb{C}})$ is the only known reduced but singular Hilbert scheme of
points~\cite{roggero_albert, Bertone__double_generic}. It
was explicitly asked in~\cite[Problem~1.6]{aimpl} whether
$\OHilb_8(\mathbb{A}^4_{\mathbb{C}})$ is
reduced.
There was a bit of progress on Question~\ref{que:charp} in recent
years. Classically,
Berthelot and Ogus~\cite[\S3]{Ogus} note that the maximal ideal of the algebra
$\mathbb{F}_p[x_1, \ldots ,x_6]/(x_1^p, x_2^p, \ldots , x_6^p, x_1x_2 + x_3x_4
+x_5x_6)$ admits no divided power structure. Bhatt~\cite[3.16]{Bhatt1}
mentioned that
this algebra does not lift to $W_2(\mathbb{F}_p) = \mathbb{Z}/p^2$ and
Zdanowicz~\cite[\S3.2]{Zdanowicz_liftability_mod_p2} gave a short direct
proof of this fact.
Langer~\cite{Langer__zerodimensional} proved that for $p=2$ this algebra
\emph{does} lift to characteristic zero. He also refined Zdanowicz's method to
give, for every local Artin
ring $A$ with $pA\neq 0$ and residue characteristic $p$, a finite $\mathbb{F}_p$-scheme
nonliftable to $A$. The constructed scheme depends on $A$ and Langer writes
``in principle these schemes could be liftable to characteristic $0$ over some
more ramified rings but we are unable to check whether this really happens.''

In this paper we prove that the answers to
Questions~\ref{que:nonreducedness}-\ref{que:charp} above are negative; both pathologies occur as special
cases of a Murphy-type Law, which we now describe. We say that \emph{Murphy's Law holds up to retraction} for a space
$\mathcal{M}$ if for every singularity type $\singula$ there is a representative $(Y,
y)$ of $\singula$, an open subscheme $(X, x)$ of $\mathcal{M}$ and a retraction $(X, x) \to (Y, y)$.
Here, a \emph{retraction} $(X, x) \to (Y, y)$ is a morphism of pointed schemes
together with a section.
The aim of this paper is to prove the following theorem.
\begin{theorem}\label{ref:mainthm:thm}
    Murphy's Law holds up to retraction for $\Hilbsixteen$.
\end{theorem}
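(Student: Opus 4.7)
The plan relies on the generalized Białynicki-Birula (BB) decomposition built earlier in the paper, together with a clever choice of torus action on $\mathbb{A}^{16}_{\mathbb{Z}}$. The macros in the preamble ($\Gx, \Gy, \Gdiag, \Gres$) strongly suggest splitting the $16$ variables into two groups $\xx = (x_1, \ldots, x_8)$ and $\yy = (y_1, \ldots, y_8)$, with the diagonal torus $\Gdiag$ acting by strictly positive weights. I would therefore begin by applying the generalized BB decomposition for $\Gdiag$ to $\Hilbsixteen$, producing a retraction $\Hplus \to \Hshort^{\Gdiag}$, where $\Hplus$ is an open subscheme of $\Hilbsixteen$ (the attractor, consisting of subschemes that admit a flat $t \to 0$ limit) and $\Hshort^{\Gdiag}$ is the fixed-point locus, parametrizing bihomogeneous ideals in $\kk[\xx, \yy]$. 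The section of this retraction is simply the inclusion of the fixed locus.

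Next, I would isolate a component of $\Hshort^{\Gdiag}$ corresponding to a carefully chosen Hilbert function, so that the ideals in question are determined by linear-algebraic data---most plausibly subspaces of the ``mixed'' degree-$(1,1)$ quadrics, i.e.\ of $\spann{\xx} \otimes \spann{\yy}$, which is $64$-dimensional. By Macaulay inverse systems, or by direct analysis of the defining equations, such a component should embed into a Grassmannian of subspaces of quadrics and be cut out by incidence conditions of the kind studied in Vakil's original paper. Possibly one then further uses the residual torus $\Gres$ and an additional BB retraction inside $\Hshort^{\Gdiag}$ to reduce to such a pure linear-algebraic picture.

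Finally, I would invoke Vakil's Murphy's Law for incidence loci of subspaces in Grassmannians and compose the retractions to conclude. The hardest step, and the main obstacle, is the strengthening from Vakil's ``up to smooth morphism'' statement to the ``up to retraction'' statement needed here: a smooth morphism need not admit a section, so one must exhibit the identification of the Grassmannian incidence locus with a component of $\Hshort^{\Gdiag}$ as an actual morphism with a section, rather than merely a smooth correspondence. This is plausibly where the generalized BB decomposition does the heavy lifting, since the BB retraction $\Hplus \to \Hshort^{\Gdiag}$ automatically provides a section from the fixed locus. The specific count of $16$ variables is then presumably the minimum needed so that, under the chosen bigrading and Hilbert function, the resulting Grassmannian incidence locus is rich enough to encode \emph{every} singularity type of finite type over $\mathbb{Z}$.
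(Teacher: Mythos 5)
The single biggest gap in your plan is that you treat $\Hplus$ as if it were an open subscheme of $\Hilbsixteen$. It is not: the forgetful map $\theta_0\colon \Hplus\to\Hshort$ has image exactly the subschemes supported at the origin of $\mathbb{A}^{16}$, which is \emph{nowhere dense} in the Hilbert scheme; even after thickening to $\theta\colon \Gadd^{16}\times\Hplus\to\Hshort$ by translations, the map is an open immersion only near points satisfying the \emph{trivial negative tangents} (TNT) condition $\dim_\kk\Hom_T(I_R,T/I_R)_{<0}=\dim T$. Establishing TNT is the hard part of the argument, and nothing in your outline produces points with this property. This is precisely what the paper's \TNTframe{} construction $\DJJ=\DI\DT+\mmx^{a+1}+\mmy^2+(Q)$ (with $Q=\sum x_iy_i$) is engineered to achieve, via a chain of depth arguments (Section~\ref{ssec:negativeTangents}). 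Without TNT, the BB retraction $\pi\colon\Hplus\to\Hshort^{\Gdiag}$ tells you nothing about the local structure of $\Hilbsixteen$ itself, so the composition you describe does not close.

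The second gap is your proposed source of universality. You suggest encoding singularity types as incidence loci of subspaces of the $64$-dimensional space of $(1,1)$-quadrics in a Grassmannian and invoking Vakil directly. The paper instead starts from Vakil's general-type surfaces $Z\subset\mathbb{P}^4$ (whose deformations realize all singularity types), truncates $I(Z)_{\geq M}\subset\kk[x_1,\dots,x_5]$, \emph{pads} with three extra variables to force $\depth(\DS_+,\DS/\DI)\geq3$ (a hypothesis needed for the TNT estimates), and only then frames into $16$ variables. There is no route in the paper through Grassmannian incidence loci; and even if you could engineer such an embedding, you would again face the TNT obstruction before being able to descend to $\Hilbsixteen$ itself. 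You also need the chain of \emph{three} local retractions of Proposition~\ref{ref:retractionFromFrame:prop} ($\Hshort\to\Hshort^{\Gdiag}\to\Hshort^{\Gx\times\Gy}\to\OHilb^{\Gx}(\mathbb{A}^8)$), involving the extra unipotent group $G$ and the flag Hilbert scheme, plus Erman's comparison between deformations of $\DI$ and its truncation $\DI+\mmx^{a+1}$. Your correct observation --- that the BB fixed-point embedding supplies the section resolving the gap between ``up to smooth morphism'' and ``up to retraction'' --- is genuinely the right high-level idea, but it only does the heavy lifting once TNT, frame-like ideals, and the triple retraction are in place.
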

On the infinitesimal level, Theorem~\ref{ref:mainthm:thm} implies that for every singularity
type $\singula$, there exists a representative $(Y, y)$ of $\singula$ and a
point on $\Hilbsixteen$ with complete local ring $\OOhat_{Y, y}[[t_1, \ldots
,t_r]]/I$, for some $r$ and $I$ such that $\OOhat_{Y, y}\cap I = 0$. In particular, $\OOhat_{Y, y}$ is
a \emph{subring} of the complete local ring.

Murphy's Law up to
retraction holds also for the scheme
$\OHilb_{\pts}(\mathbb{P}^{16}_{\mathbb{Z}})$, as $\Hilbsixteen$ is its open
subscheme. More generally, the forgetful functor from embedded to
abstract deformations of a finite scheme is
smooth~\cite[p.~4]{artin_deform_of_sings}, hence the above pathologies appear
also for the abstract deformations of finite schemes and for Hilbert schemes
of points on every smooth quasi-projective variety of dimension at least
sixteen.

The negative answers to Questions~\ref{que:nonreducedness}-\ref{que:charp}
with $n=16$ follow from Theorem~\ref{ref:mainthm:thm} applied to
singularity types $[\Spec(\mathbb{Z}[u]/u^2), (u)]$ and $[\Spec(\mathbb{Z}/p),
0]$
respectively, see Section~\ref{sec:after}. More
precisely, for Question~\ref{que:charp} we obtain finite
schemes $R$ which do not lift to any ring $A$ with $pA \neq
0$. In Section~\ref{sec:after} we give explicit examples of this behavior,
for $\kk = \mathbb{Z}/p$, $p=2,3,5$. Using
the singularity type $[\Spec(\mathbb{Z}/p^{\nu}), (p)]_{{\nu}=2,3, \ldots }$ we also obtain
finite $(\mathbb{Z}/p)$-schemes $R$ that do lift to $W_{\nu}(\mathbb{Z}/p) =
\mathbb{Z}/p^{\nu}$
but do not lift to any $A$ with $p^{\nu}A \neq 0$ and so forth.

\smallskip
\newcommand{\varX}{X}%
\newcommand{\Xplus}{X^+}%
\newcommand{\embdim}{r}%
The main difficulty with analysing finite schemes is their lack of structure:
for example,
they admit no non-trivial line bundles. Our proof of
Theorem~\ref{ref:mainthm:thm} proceeds by a series of reductions from objects with
more structure, such as projective schemes. The main role is played by the generalized \BBname{}
decomposition, which we now recall.

Classically~\cite{BialynickiBirula__decomposition}, for a smooth variety $\varX$ with a $\Gmult$-action and
$\varX^{\Gmult} = \coprod_{i=1}^r Y_i$ with $Y_i$ connected, the
BB decomposition is a variety $\Xplus =
\coprod_{i=1}^r
\Xplus_i$ with a map $\theta_0\colon \Xplus \to \varX$ and a retraction
$\pi\colon \Xplus \to \varX^{\Gmult}$ which makes $\Xplus_i$ a locally trivial
affine fiber bundle over
$Y_i$.

The generalized \BBname{} decomposition~\cite{drinfeld,
Jelisiejew__Elementary, jelisiejew_sienkiewicz__BB} extends this construction to
$\Gmult$-schemes $\varX$, not necessarily smooth, normal or reduced. We apply it
to the scheme $\HshortZZ :=
\OHilb_{\pts}(\mathbb{A}^{\embdim}_{\mathbb{Z}})$ with the standard $\Gmult$-action
on $\mathbb{A}^{\embdim}_{\mathbb{Z}}$. We obtain a locally finite type
$\mathbb{Z}$-scheme $\HplusZZ = \OHilb^+_{\pts}(\mathbb{A}^{\embdim}_{\mathbb{Z}})$
together with a map $\theta_0\colon \HplusZZ \to
\HshortZZ$ and a retraction $\pi\colon \HplusZZ \to \HshortZZ^{\Gmult}$ with
section $i$ ($\HplusZZ$ is not in general a bundle over $\HshortZZ^{\Gmult}$).
The $\kk$-points in the image of $\theta_0$ are exactly the $\kk$-schemes
supported at the origin of $\mathbb{A}^{\embdim}_{\kk}$, so $\im(\theta_0)$ is nowhere
dense. To remedy this, we define $\theta\colon \Gadd^{\embdim} \times\HplusZZ \to \HshortZZ$
which maps $(v, [\DR])$ to the scheme $\theta_0([\DR])$ translated by $v$.
    The structure is summarized in the diagram below, see
    Section~\ref{sec:BBdecompositions} for details.
    \begin{equation}\label{eq:diagramHplus}
        \begin{tikzcd}
            \Gadd^{\embdim} \times \HplusZZ \arrow[d,
            "\pr_2"]\arrow[r, "\theta"]& \HshortZZ \\
            \HplusZZ \arrow[u, "0_{\Gadd^{\embdim}} \times \id", bend left]\arrow[ru, "\theta_0"']\arrow[d, "\pi"] & \\
            \HshortZZ^{\Gmult}\arrow[u, "\fixedptemb", bend left]
        \end{tikzcd}
    \end{equation}

    The maps $\theta_0$ and $\theta$ are injective on $\kk$-points for all
    fields $\kk$ and we identify the points of $\HplusZZ$ with their images in $\HshortZZ$.
    The tangent space to $[\DR]\in
    \Hshortkk(\kk)$ is equal to $\Hom_\DT(I_{\DR}, \DT/I_{\DR})$, where $\DT =
    H^0(\mathbb{A}^{\embdim}, \OO_{\mathbb{A}^{\embdim}_{\kk}})$. If $[\DR]$ is $\Gmult$-fixed, then
    this space is graded and in this case $T_{\Hpluskk, [\DR]} = \Hom_T(I_{\DR},
    \DT/I_{\DR})_{\geq 0}$. It follows that $d\theta_{[\DR]}$ is surjective if and only if
    \begin{equation}\label{ref:TNTcondition}
        \dim_{\kk} \Hom_{\DT}(I_{\DR}, \DT/I_{\DR})_{<0} = \dim \DT.
    \end{equation}
    If~\eqref{ref:TNTcondition} holds, we say that $\DR$ has \emph{trivial negative tangents}
    (\emph{TNT} for short). If $\DR$ has TNT, then $\theta$ is an open immersion on its
    neighbourhood, see~\cite{Jelisiejew__Elementary}. This property is
    important enough to give it a name: we say that \emph{$(X, x)$ locally
    retracts to $(Y, y)$} if
    there are open neighbourhoods $x\in U \subset X$ and $y\in V\subset Y$
    and a retraction $(U, x)\to (V, y)$. For example, if $\DR$ has TNT then
    $(\HshortZZ, [\DR])$ locally retracts to $(\HshortZZ^{\Gmult}, [\DR])$.

    Now we discuss the proof of Theorem~\ref{ref:mainthm:thm}.  We first
    present a natural but unsuccessful
    line of argument and then refine it to obtain a proof.
    Fix a singularity type $\singula$. Vakil~\cite{Vakil_MurphyLaw} proved
    that there is a smooth surface $Z \subset \mathbb{P}^4$ whose embedded
    deformations in $\mathbb{P}^4$ are of type $\singula$. For
    $M\gg 0$ the $\Gmult$-equivariant deformations of the
    cone $V(I(Z)_{\geq M}) \subset \mathbb{A}^5$ are also
    of singularity type $\singula$. Let $\nn$ be the ideal of the origin in
    $\mathbb{A}^5$.
    Then the
    $\Gmult$-equivariant deformations of the zero-dimensional truncation $R_0
    := V(I(Z)_{\geq M} + \nn^{M+2})$ are of singularity type $\singula$, see~\cite{erman_Murphys_law_for_punctual_Hilb}.
    In other words, $(\HshortZZ^{\Gmult}, [R_0])$ has type $\singula$.
    Thus, $(\HplusZZ, [R_0])$ has singularity type $\singula$ up to retraction.
    However, there is no reason for $\theta$ to be an open immersion around
    $[R_0]$ and we cannot say anything about the singularity type of $[R_0]\in
    \HshortZZ$.  To obtain a scheme such that that $\theta$ is an open
    immersion in its neighbourhood, we need a
    refinement that produces a scheme with TNT.

    The refinement is based on the concept of TNT frames.
    Let $\DI \subset \DS = \kk[x_1, \ldots ,x_n]$ be a homogeneous ideal and
    $a\geq 2$. A
    \emph{\TNTframe{} of size $a$ for $\DI$} is an ideal $\DJJ
    \subset \DT = \DS[y_1, \ldots , y_n] = \kk[x_1, \ldots ,x_n, y_1, \ldots ,y_n]$ given by
    \begin{equation}\label{eq:TNTframe}
        \DJJ := \DI\cdot \DT + (x_1, \ldots
        ,x_n)^{a+1} + (y_1, \ldots , y_n)^{2} + (x_1y_1 +  \ldots  +
        x_ny_n).
    \end{equation}
    Informally, the \TNTframe{} is a bifurcated reduction of $\DI$ to
    dimension zero. The quadric $Q = \sum x_iy_i$ is technically useful
    for the proof that $\Spec(\DT/\DJJ)$ has TNT because the deformations of $\DJJ$ inside $V(Q)$ do not admit
    any negative
    tangents under mild assumptions on $\DI$, see Corollary~\ref{ref:minusOneTangents:cor}.
    Since $\DJJ$ is homogeneous with respect to both $x_i$'s and $y_i$'s, the
    stabilizer of $[\DJJ]$ contains a two dimensional torus $\Gx \times \Gy$.
    Let $\Gdiag \subset \Gx \times \Gy$ be the torus acting diagonally.

The concept of \TNTframe{}s is geared towards the following result.
\begin{proposition}\label{ref:retractionFromFrame:prop}
    Let $\charr \kk\neq 2$ and let $\DI\subset \DS$ be a homogeneous ideal with $I_2 = 0$.
    Assume that $\depth(\DS_+, \DS/\DI) \geq 3$ and $\dim \DS \geq 3$. Then
    \begin{enumerate}
        \item\label{it:firstRetraction} $\Spec(\DT/\DJJ)$ has TNT, hence $(\HshortZZ, [\DJJ])$ locally retracts
            to $(\HshortZZ^{\Gdiag}, [\DJJ])$,
        \item\label{it:secondRetraction} $(\HshortZZ^{\Gdiag}, [\DJJ])$ locally retracts to
            $(\HshortZZ^{\Gx \times \Gy}, [\DJJ])$,
        \item\label{it:thirdRetraction} $(\HshortZZ^{\Gx \times \Gy}, [\DJJ])$
            locally retracts to $(\OHilb^{\Gx}(\mathbb{A}^n), [\DI + (x_1, \ldots
            ,x_n)^{a+1}])$.
    \end{enumerate}
\end{proposition}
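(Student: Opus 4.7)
My plan is to establish the three local retractions by applying the generalized \BBname{} decomposition with respect to appropriate $\Gmult$-actions, using the bigrading of $\Hom_\DT(\DJJ, \DT/\DJJ)$ induced by the $\Gx\times\Gy$-action, under which $\DJJ$ is bihomogeneous. Throughout, the key structural features will be the bound $(y_1,\ldots,y_n)^{2} \subset \DJJ$, which confines $\DT/\DJJ$ to $y$-degrees at most $1$, and the relation $Q \in \DJJ$, which ties $x$- and $y$-translations together.

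For part \eqref{it:firstRetraction}, I will verify the TNT condition \eqref{ref:TNTcondition}, that is, $\dim_\kk \Hom_\DT(\DJJ, \DT/\DJJ)_{<0} = 2n$ for the total ($\Gdiag$) grading. The $2n$ partial derivatives $\partial/\partial x_i$ and $\partial/\partial y_j$ furnish a $2n$-dimensional subspace of weight $-1$. Conversely, given any $\varphi \in \Hom_\DT(\DJJ, \DT/\DJJ)_{<0}$, a suitable combination of translations may be subtracted from $\varphi$ to arrange $\varphi(Q) = 0$ (using that the images $\partial Q/\partial x_i = y_i$ and $\partial Q/\partial y_j = x_j$ span the relevant degree of $\DT/\DJJ$ and checking syzygy compatibility). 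The adjusted map factors through $\DJJ/(Q)$ and represents a negative-degree tangent to the deformations of $V(\DJJ)$ inside the quadric $V(Q) \subset \mathbb{A}^{2n}$, which is ruled out by Corollary~\ref{ref:minusOneTangents:cor}. For part \eqref{it:secondRetraction}, I apply the BB decomposition to the residual torus $\Gres \cong (\Gx\times\Gy)/\Gdiag$ acting on $\Hshort_\mathbb{Z}^{\Gdiag}$, whose fixed locus is $\Hshort_\mathbb{Z}^{\Gx\times\Gy}$; the retraction follows once the negative $\Gres$-weight part of $\Hom_\DT(\DJJ, \DT/\DJJ)^{\Gdiag}$ vanishes. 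Writing $(d,e)$ for the $(\Gx,\Gy)$-bidegree, $\Gdiag$-fixed means $e=-d$ and $\Gres$-negative means $d<0$. Since $(y_1,\ldots,y_n)^{2} \subset \DJJ$, any such $\varphi$ must annihilate $Q$ and the generators of $(y_1,\ldots,y_n)^{2}$, and must vanish on $y$-degree-$0$ generators unless $d=-1$. The remaining case $d=-1$ amounts to showing that no nonzero $\DT$-linear map $\DI\cdot \DT + (x_1,\ldots,x_n)^{a+1} \to (\DT/\DJJ)_{y\text{-deg}=1}$ of $x$-degree $-1$ is compatible with the relation $Q \equiv 0$; I expect this to follow from the Koszul-type rigidity enforced by $\depth(\DS_+,\DS/\DI)\geq 3$.

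For part \eqref{it:thirdRetraction}, the retraction is constructed directly: the forward morphism $\Hshort_\mathbb{Z}^{\Gx\times\Gy} \to \OHilb^{\Gx}(\mathbb{A}^n)$ sends a bihomogeneous ideal $\DJJ'$ near $\DJJ$ to its $y$-degree-zero slice $\DJJ' \cap \DS$, flatness being automatic from the bigraded structure, while the section $\DI' \mapsto \DI'\cdot \DT + (x_1,\ldots,x_n)^{a+1} + (y_1,\ldots,y_n)^{2} + (Q)$ mimics the recipe defining $\DJJ$; these two maps manifestly compose to the identity on $\OHilb^{\Gx}(\mathbb{A}^n)$ near $[\DI + (x_1,\ldots,x_n)^{a+1}]$. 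I anticipate that parts \eqref{it:firstRetraction} and \eqref{it:thirdRetraction} will reduce to essentially mechanical bookkeeping once Corollary~\ref{ref:minusOneTangents:cor} is in hand, while the hardest step will be the bidegree $(-1,1)$ vanishing argument in part \eqref{it:secondRetraction}, where the depth hypothesis on $\DS/\DI$ enters in an essential way.
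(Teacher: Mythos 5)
Your overall plan to iterate the \BBname{} decomposition is the paper's, and your treatment of part~\eqref{it:firstRetraction} is on the right track but incomplete. Corollary~\ref{ref:minusOneTangents:cor} controls only bidegrees $(-1,0)$ and $(0,-1)$, whereas the TNT condition concerns every $(\alpha,\beta)$ with $\alpha+\beta<0$. You must also invoke Lemma~\ref{ref:minusminusTangents:lem} for $(-1,-1)$ and, crucially, Corollary~\ref{ref:verynegative:cor} for $\alpha\le -2$ or $\beta\le -2$; the latter is where $\depth(\DS_+,\DS/\DI)\ge 3$ is actually consumed. The translation trick (subtracting a derivation to force $\varphi(Q)=0$) is only available, and only needed, in bidegrees $(-1,0)$ and $(0,-1)$.

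Part~\eqref{it:secondRetraction} is where the proposal breaks down. The retraction does not follow because the negative $\Gres$-weight piece vanishes --- it does not vanish, in either orientation. The piece $\bigoplus_{\alpha\ge 1}\Hom_\DT(\DJJ,\DT/\DJJ)_{(\alpha,-\alpha)}$ (the $\Gy$-negative part, which is the one that must be controlled) is nonzero and is identified with the Lie algebra $\mathg$ of the unipotent group $G$ defined in~\eqref{eq:Gdef} (Corollaries~\ref{ref:verynegative:cor} and~\ref{ref:homogeneousTangentsForJ:cor}); the retraction comes from the open immersion $G\times(\HshortZZ^{\Gdiag})^{+,\Gy}\to\HshortZZ^{\Gdiag}$ of Proposition~\ref{ref:planesOnQuadric:prop}, exactly analogous to the role of $\Gadd^{2n}$ in part~\eqref{it:firstRetraction} --- a group you have dropped from the picture. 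You also have the orientation backward: the relevant hard bidegree is $(1,-1)$, not $(-1,1)$. By the built-in asymmetry of $\DJJ$ (namely $\mmxpow$ with $a\ge 2$ versus $\mmy^2$) these are genuinely different, and your bidegree $(-1,1)$ is likewise nonzero (the derivation $\sum_i y_i\partial_{x_i}$ sits there and kills $Q$) but harmless, as it lies in the tangent space of the BB-cell. Finally, the key $(1,-1)$ computation (Proposition~\ref{ref:degreeZeroMain:prop}, Corollary~\ref{ref:homogeneousTangentsForJ:cor}) is not a depth argument: it is a $\GL(\DV)$-equivariant linear-algebra count identifying the relevant kernel with $\Lambda^2\DV$, and this is exactly where $\charr\kk\ne2$ enters; the ``Koszul-type rigidity from depth'' you appeal to would not produce it.

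For part~\eqref{it:thirdRetraction} the spirit is right, but ``flatness is automatic'' conceals the substance. The paper goes through the flag Hilbert scheme (Propositions~\ref{ref:JToPair:prop} and~\ref{ref:IToPair:prop}); the forward direction requires lifting the syzygies of $\DI$, and the section $\DI'\mapsto\DI'\DT+\mmxpow+\mmy^2+(Q)$ is flat only because $Q$ is a non-zero-divisor on $\DT/\DI'\DT$, which is Lemma~\ref{ref:Qregular:lem} and once more draws on the depth hypothesis.
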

For $a\gg 0$ the $\Gx$-equivariant deformations of $\DI + (x_1, \ldots
,x_n)^{a+1} \subset \DS$ and $\DI \subset \DS$ are canonically
isomorphic~\cite{erman_Murphys_law_for_punctual_Hilb}. For such a number $a$, the
composition of the local retractions from
Proposition~\ref{ref:retractionFromFrame:prop} gives a local retraction of $(\HshortZZ, [\DJJ])$ to
$(\OHilb^{\Gx}(\mathbb{A}^n), [\DI])$.
The analogue of Proposition~\ref{ref:retractionFromFrame:prop} holds also in
$\charr \kk = 2$, for a slightly modified $\DJJ$, see
Section~\ref{ssec:characteristicTwo}.

We return to the proof of Theorem~\ref{ref:mainthm:thm}. Fix a singularity type $\singula$,
a surface $Z \subset \mathbb{P}^4$ and its truncation $V(I(Z)_{\geq M})
\subset \mathbb{A}^5$ as above. The ideal $I(Z)_{\geq M}$ does not satisfy the
depth assumption of Proposition~\ref{ref:retractionFromFrame:prop}, so fix a linear embedding $\mathbb{A}^5 \into
\mathbb{A}^8$ and consider the extended ideal $\DI := I(Z)_{\geq M}\cdot \DS$ in $\DS =
H^0(\mathbb{A}^8, \OO_{\mathbb{A}^8})$. Let $\DJJ$ be a \TNTframe{} for $\DI$
with $a\gg0$. Proposition~\ref{ref:retractionFromFrame:prop} implies that
the (not necessarily equivariant!) deformations of $[\DJJ]$ in
$\Spec(\DT) = \mathbb{A}^{16}$ locally retract to the $\Gx$-equivariant
deformations of $[\DI]$ in $\mathbb{A}^8$. These in turn retract to the equivariant deformations of
$[I(Z)_{\geq M}]$ in $\mathbb{A}^5$. Thus,
$(\Hilbsixteen, [\DJJ])$ locally retracts to a scheme
of singularity type $\singula$ and the proof is concluded.

In the course of the proof we give two side-results. First, in
Corollary~\ref{ref:TNTforJ:prop} we present a class of ideals, generalizing
\TNTframe{}s above, which have TNT.
By~\cite[Theorem~1.2]{Jelisiejew__Elementary}, each of those ideals lies on an
elementary component of the Hilbert scheme; thus we obtain a new, very large class of
elementary components. Second, in Corollary~\ref{ref:rigidity:cor} we prove that
thickenings of maximal linear subspaces of the quadric $V(Q)$ are rigid.


Theorem~\ref{ref:mainthm:thm} together with related combinatorics, in particular
\TNTframe{}s which subtly balance prescribed
homogeneous deformations and the TNT condition, is the main novelty of this paper. The \BBname{}
decomposition of $\Hilbnkk$, in the equicharacteristic setting, was introduced
in~\cite{Jelisiejew__Elementary}.
The ambient dimension $n = 16$ in Theorem~\ref{ref:mainthm:thm} is chosen to make the argument transparent and
is probably not optimal. Hazarding a guess, we would say that
it can be reduced to $n= 6$ or even $n=4$. In
any case, the case $n = 3$ is special because of the superpotential
description~\cite{Szendroi_Dimca, Bryan_Behrend_Szendroi}; it would be
interesting to know the answers to
Questions~\ref{que:nonreducedness}-\ref{que:charp} in this case.

The above results exhibit pathologies of the space of based rank $n$
algebras, see~\cite[\S4]{poonen_moduli_space}.
Hilbert schemes of points also appear prominently in the study of secant and
cactus varieties and in algebraic complexity~\cite{Landsberg__book}. The
non-reducedness of $\HilbnCC$ strongly suggests that the equations for these
varieties obtained in~\cite{nisiabu_jabu_cactus} are only set-theoretic, not
ideal-theoretic. To make this suggestion rigorous, one would need to know
that the Gorenstein locus of $\HilbnCC$ is non-reduced, but this remains open.
Another interesting open question is whether there are generically non-reduced components
of the Hilbert scheme of points.

The outline is as follows: in Section~\ref{sec:linearalgebra} we prove the
necessary prerequisite results on the tangent spaces and maps. In
Subsections~\ref{ssec:negativeTangents}-\ref{ssec:degreeZeroTangents} we deal
with $\charr\kk\neq 2$, while
in Subsection~\ref{ssec:characteristicTwo} we present the modified
construction for characteristic two.  Section~\ref{sec:BBdecompositions}
contains main ideas of the paper: we discuss \BBname{} decompositions, prove a
generalized version of Proposition~\ref{ref:retractionFromFrame:prop} and
finally prove Theorem~\ref{ref:mainthm:thm}. In Section~\ref{sec:after} we
discuss consequences of specific singularity types and give explicit examples
of non-reduced points on the Hilbert scheme and components lying in positive
characteristic.

\subsection*{Acknowledgements}

I am very grateful to Piotr Achinger, Jaros{\l{}}aw Buczy{\'n}ski and Maciek Zdanowicz for helpful and inspiring
conversations and insightful comments of the early versions of this paper. The paper was
prepared during the Simons Semester \emph{Varieties: Arithmetic and
Transformations} which is supported by the grant 346300 for IMPAN from the
Simons Foundation and the matching 2015-2019 Polish MNiSW fund.

\smallskip
\section{Pointed schemes and smooth equivalence}
A \emph{pointed scheme} $(X, x)$ is a scheme $X$ of finite type over $\mathbb{Z}$
together with a point $x$ of the underlying topological space of $X$. A
\emph{morphism of pointed schemes} $f\colon (X, x)\to (Y, y)$ is a morphism of
schemes $f'\colon X\to Y$ such that $f'(x) = y$. We say that $f$ is
\emph{smooth} if the underlying morphism $f'\colon X\to Y$ is smooth.
We say that pointed schemes $(X, x)$ and $(Y, y)$ are \emph{smoothly
equivalent} if there exists a pointed scheme $(Z, z)$ and smooth maps
\[
    \begin{tikzcd}
        & (Z, z)\arrow[ld, "\mathrm{smooth}"']\arrow[rd, "\mathrm{smooth}"] &\\
        (X, x) & & (Y, y)
    \end{tikzcd}
\]
\begin{lemma}\label{ref:equivRel:lem}
    Being smoothly equivalent is an equivalence relation of pointed schemes.
\end{lemma}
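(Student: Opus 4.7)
The plan is to verify the three properties of an equivalence relation in turn; reflexivity and symmetry are formal, so essentially all the content is in transitivity.

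For reflexivity, I would take $Z = X$ with $z = x$ and both legs equal to the identity morphism, which is smooth. For symmetry, given a span
\[
    (X,x) \xleftarrow{f} (Z,z) \xrightarrow{g} (Y,y)
\]
with $f,g$ smooth, the same span read in reverse witnesses $(Y,y) \sim (X,x)$.

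For transitivity, suppose $(X,x) \sim (Y,y)$ via a span $(X,x) \xleftarrow{f_1} (Z_1, z_1) \xrightarrow{g_1} (Y,y)$ and $(Y,y) \sim (W,w)$ via $(Y,y) \xleftarrow{f_2} (Z_2, z_2) \xrightarrow{g_2} (W,w)$, with all four morphisms smooth. I would form the fiber product $Z_3 := Z_1 \times_Y Z_2$ with projections $\pi_1 \colon Z_3 \to Z_1$ and $\pi_2 \colon Z_3 \to Z_2$. Smoothness is stable under base change, so $\pi_1$ (being the base change of $f_2$ along $g_1$) and $\pi_2$ (being the base change of $g_1$ along $f_2$) are smooth. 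Composing, the morphisms $f_1 \circ \pi_1 \colon Z_3 \to X$ and $g_2 \circ \pi_2 \colon Z_3 \to W$ are smooth by composition of smooth morphisms, so they give the desired span once we produce a compatible basepoint.

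The one genuinely non-formal step—and I expect it to be the only real obstacle—is exhibiting a point $z_3 \in Z_3$ with $\pi_1(z_3) = z_1$ and $\pi_2(z_3) = z_2$. Both $z_1$ and $z_2$ lie over the common point $y \in Y$, and the topological fiber of $Z_3 \to Z_1 \times Z_2$ over $(z_1, z_2)$ is the spectrum of $\kappa(z_1) \otimes_{\kappa(y)} \kappa(z_2)$. Since this tensor product of two field extensions of $\kappa(y)$ is nonzero, it has a prime ideal, which yields the required point $z_3$. Then $(Z_3, z_3)$ together with the two smooth morphisms above shows $(X,x) \sim (W,w)$, completing the verification. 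Finally, one should note that $Z_3$ is of finite type over $\mathbb{Z}$ as a fiber product of such schemes, so the resulting span stays within the category of pointed schemes as defined in this section.
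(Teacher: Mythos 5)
Your proof is correct and follows essentially the same route as the paper: form the fiber product over $Y$, use stability of smoothness under base change and composition, and produce the needed base point by observing that $\kappa(z_1) \otimes_{\kappa(y)} \kappa(z_2)$ is a nonzero ring and hence has a prime. The only cosmetic difference is that the paper does not bother spelling out why the two legs of the resulting span are smooth, while you do; the key non-formal step (existence of the base point) is identical.
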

\begin{proof}
    The relation is clearly reflexive and symmetric. To prove transitivity, suppose that the pairs $(X, x)$, $(Y, y)$ and $(Y, y)$, $(T,
            t)$ are smoothly equivalent. By definition there exist pointed schemes
            $(Z, z)$ and $(V, v)$ together with smooth maps forming the
            diagram
            \[
                \begin{tikzcd}
                    & (Z, z)\arrow[ld, "\mathrm{smooth}"']\arrow[rd,
                    "\mathrm{smooth}"] & & (V, v)\arrow[ld, "\mathrm{smooth}"]\arrow[rd,
                    "\mathrm{smooth}"]\\
                    (X, x) & & (Y, y) & & (T, t)
                \end{tikzcd}
            \]
            Let $W := Z \times_Y V$ and let $\kappa(-)$ denote the residue
            field. The algebra $\kappa(z)\tensor_{\kappa(y)} \kappa(v)$ is a
            tensor product of nonzero algebras over a field, hence it is
            nonzero. Therefore the scheme $\Spec(\kappa(z)\tensor_{\kappa(y)}
            \kappa(v)) \simeq \Spec(\kappa(z))\times_{\Spec(\kappa(y))}
            \Spec(\kappa(v))$ is nonempty.
            Choose a point $p$ of this scheme.
            The natural maps
            $\Spec(\kappa(z))\to Z$ and $\Spec(\kappa(v))\to V$ induce a map
            $\Spec(\kappa(p))\to W$. Let $w\in W$ be the image of $p$. The
            pointed scheme $(W, w)$
            comes with smooth maps to $(X, x)$ and $(T, t)$ and proves that
            those schemes are smoothly equivalent.
\end{proof}
Lemma~\ref{ref:equivRel:lem} shows in particular that the above definition of
smooth equivalence
agrees with Vakil's one given in the
introduction.
The
equivalence classes of smooth equivalence are called \emph{singularity types}. The singularity type
of $(X, x)$ is denoted by $[(X, x)]$ or simply $[X]$ if $X$ has only one
point. For example, the singularity type $[\Spec(\mathbb{F}_p)]$ consists of pointed
schemes $(X, x)$ over $\Spec(\mathbb{F}_p)$ and such that $x\in X$ is smooth.
A \emph{retraction} $(X, x)\to (Y, y)$ is a pair $(f, s)$ where $f\colon (X,
x)\to (Y, y)$ and $s\colon (Y, y)\to (X, x)$ are morphisms of pointed schemes such that $f\circ s =
\id_{Y}$. For each such retraction, the residue fields of $x$ and $y$ are isomorphic.
The retractions we encounter in this article come from diagrams similar
to~\eqref{eq:diagramHplus}.

\section{Tangent spaces}\label{sec:linearalgebra}
\newcommand{\tang}[1]{T_{[#1]}\Hshort}%
\newcommand{\mmx}{\mm_{x}}%
\newcommand{\mmy}{\mm_{y}}%
\newcommand{\adx}{\alpha}%
\newcommand{\bdx}{\beta}%
\newcommand{\mmxpow}{\mmx^{a+1}}%
\newcommand{\mmypow}{\mmy^{b+1}}%

    In this section we prove tangent-map-surjectivity lemmas, such as the TNT
    condition, needed for
    the proof of a generalized version of Proposition~\ref{ref:retractionFromFrame:prop}.
    Specifically, the aim is to prove Proposition~\ref{ref:TNTforJ:prop},
    Corollary~\ref{ref:degreeZeroMain:cor}
    (for characteristic two, respectively Proposition~\ref{ref:TNTforJChar2:prop},
    Corollary~\ref{ref:degreeZeroMainCharTwo:cor}),
    which are applied in Section~\ref{sec:BBdecompositions}.
    These results follow from the chain of quite technical
    partial results, obtained using linear algebra and representation theory. We encourage the reader to consult
    Section~\ref{sec:BBdecompositions} for motivation before diving into
    details.

    Throughout, let $\kk$ be field. Let $\DI\subset \DS= \kk[\xx]$ be a
    homogeneous ideal. Let $\DT = \DS[\yy] = \kk[x_1, \ldots ,x_n, y_1, \ldots
    , y_n]$.
    Fix $a\geq 2$, $b\geq 1$ and let $\mmx := (x_1, \ldots ,x_n) \subset
    \DT$, $\mmy := (y_1, \ldots ,y_n) \subset \DT$, $Q := \sum x_iy_i$ and
    \[
        \DJJ := \DI\cdot \DT + \mmxpow + \mmypow + (Q).
    \]
    The ideal $\DJJ$ is $\mathbb{N}^2$-graded by $(\deg \xx, \deg \yy)$.
    Throughout this section the
    word \emph{homogeneous} for elements of $\DT$ refers to this bi-grading.
    For elements of $\DS$, the word \emph{homogeneous} refers to the usual grading by
    the total degree (when viewing $\DS$ as subring of $\DT$, these agree).
    The ideal $\DJJ$ is generated by elements of degrees $(*, 0)$, $(0, b+1)$,
    and $(1, 1)$. The space $\Hom_{\DT}(\DJJ, \DT/\DJJ)$ is graded by
    \begin{equation}\label{eq:tangentJ}
        \Hom_{\DT}(\DJJ, \DT/\DJJ)_{(\adx, \bdx)} := \left\{ \varphi\colon
            \DJJ \to \DT/\DJJ\ |\ \varphi\left(\DJJ_{(\gamma, \delta)}\right) \subset
            (\DT/\DJJ)_{(\gamma+\adx, \delta+\bdx)}\mbox{ for all }\gamma,
        \delta \right\}.
    \end{equation}

    This section is devoted to showing that certain homogeneous pieces of
    $\Hom_{\DT}(\DJJ, \DT/\DJJ)$ vanish or are as small as possible.
    The following Table~\ref{eq:tableCharNonTwo} provides for each such piece a reference to the
    corresponding result. Stars denote pieces which are
    not of interest. The table is subtly asymmetric with respect to $\xx$ and
    $\yy$.
    \begin{table}[h]
        \begin{tabular}{@{} cc | cc cc cc cc @{}}
                $\Hom_{\DT}(\DJJ, \DT/\DJJ)_{(\adx, \bdx)}$                &&
                $\beta \leq -2$ && $\beta = -1$ && $\beta = 0$ && $\beta\geq
                1$\\
                \toprule
                $\alpha\leq -2$ &&
                Cor~\ref{ref:verynegative:cor}  && Cor~\ref{ref:verynegative:cor} && Cor~\ref{ref:verynegative:cor} && Cor~\ref{ref:verynegative:cor}\\
            $\alpha = -1$&& Cor~\ref{ref:verynegative:cor} &&
            Lem~\ref{ref:minusminusTangents:lem} &&
            Cor~\ref{ref:minusOneTangents:cor} && $\star$\\
            $\alpha = 0$&& Cor~\ref{ref:verynegative:cor} &&
            Cor~\ref{ref:minusOneTangents:cor} && $\star$ && $\star$\\
            $\alpha = 1$&& Cor~\ref{ref:verynegative:cor} &&
            Cor~\ref{ref:homogeneousTangentsForJ:cor} && $\star$&& $\star$\\
            $\alpha \geq 2$&& Cor~\ref{ref:verynegative:cor} &&
            $\star$ && $\star$&& $\star$\\
        \end{tabular}
        \caption{Reference for computations of pieces of
            $\Hom_{\DT}(\DJJ, \DT/\DJJ)_{(\adx, \bdx)}$, characteristic not
            equal to two. The pieces marked with
            Cor~\ref{ref:verynegative:cor} or
            Lem~\ref{ref:minusminusTangents:lem} vanish, others do not.}\label{eq:tableCharNonTwo}
    \end{table}

    Now we introduce two tricks which recur in our computation of the
    homogeneous components of the space~\eqref{eq:tangentJ}.
    Consider the canonical epimorphism $p\colon \DT/(\DI\cdot \DT +
    (Q)) \to\DT/\DJJ$ and note that there exists a unique homogeneous linear
    section of the map $p$:
    \[
        s\colon \frac{\DT}{\DJJ} \to \frac{\DT}{\DI\cdot \DT + (Q)}
    \]
    and that $s$ is zero in degrees $(*, \geq a+1)$, $(\geq b+1, *)$ and an
    isomorphism in degrees $(\leq a, \leq b)$.
    Our first trick is the following lemma.
    \newcommand{\gensFstIndx}{c_{1i}}%
    \newcommand{\gensSndIndx}{c_{2i}}%
    \newcommand{\relsFstIndx}{d_{1j}}%
    \newcommand{\relsSndIndx}{d_{2j}}%
    \begin{lemma}[lifting homogeneous homomorphisms]\label{ref:lifting:lem}
        Let $N$ be a graded $\DT$-module with a minimal presentation
        \[
            \begin{tikzcd}
                \bigoplus_j \DT(-\relsFstIndx, -\relsSndIndx) \arrow[r,
                "\delta_1"] & \bigoplus_i
                \DT(-\gensFstIndx, -\gensSndIndx) \arrow[r, "\delta_0"] & N \arrow[r] & 0
            \end{tikzcd}
        \]
        and $\psi$ be a homomorphism $\psi\colon N\to \DT/\DJJ$ of degree $(\adx, \bdx)$.
        Suppose that $\relsFstIndx + \adx \leq a$ and $\relsSndIndx + \bdx \leq b$ for all $j$.
        Then $\psi$ lifts to a homomorphism $N\to \DT/(\DI\cdot \DT + (Q))$ of
        degree $(\adx, \bdx)$.
    \end{lemma}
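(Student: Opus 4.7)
The plan is to construct a lift $\tilde{\psi}\colon N \to \DT/(\DI\cdot \DT + (Q))$ by prescribing lifts of $\psi$ on the generators of $N$ via the section $s$, and then to verify that this prescription automatically respects the relations because the relations land in a bidegree range where the projection $p$ is an isomorphism. This is a bookkeeping argument built around the two ``tricks'' the section just introduced.

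Concretely, I would let $e_i \in N$ denote the images of the basis elements of $\bigoplus_i \DT(-\gensFstIndx, -\gensSndIndx)$ under $\delta_0$, set $\tilde{\psi}(e_i) := s(\psi(e_i))$, which lies in bidegree $(\gensFstIndx + \adx, \gensSndIndx + \bdx)$, and extend $\DT$-linearly to a homogeneous map $\bigoplus_i \DT(-\gensFstIndx, -\gensSndIndx) \to \DT/(\DI\cdot \DT + (Q))$ of bidegree $(\adx, \bdx)$. Because $p \circ s = \id$ on $\DT/\DJJ$, the composite $p \circ \tilde{\psi}$ agrees with $\psi \circ \delta_0$ on each generator, hence on the whole free module.

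The only thing that is not automatic is that $\tilde{\psi}$ descends to $N = \coker(\delta_1)$, and this is where the degree hypothesis enters. For the $j$-th basis element $f_j$ of $\bigoplus_j \DT(-\relsFstIndx, -\relsSndIndx)$, the element $\tilde{\psi}(\delta_1(f_j))$ lives in bidegree $(\relsFstIndx + \adx, \relsSndIndx + \bdx)$. By hypothesis $\relsFstIndx + \adx \leq a$ and $\relsSndIndx + \bdx \leq b$, so in this bidegree neither $\mmxpow$ nor $\mmypow$ contributes to $\DJJ$, and consequently $p$ restricts to an isomorphism between the bidegree-$(\relsFstIndx + \adx, \relsSndIndx + \bdx)$ components of $\DT/(\DI\cdot\DT+(Q))$ and $\DT/\DJJ$. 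Since $p(\tilde{\psi}(\delta_1(f_j))) = \psi(\delta_0(\delta_1(f_j))) = 0$, injectivity of $p$ in this bidegree forces $\tilde{\psi}(\delta_1(f_j)) = 0$, so $\tilde{\psi}$ factors through $N$ and is the desired lift.

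The only subtlety worth flagging is that $s$ is merely a $\kk$-linear section, not a ring map, so the lift cannot be produced by any universal property; it must be defined on generators and extended $\DT$-linearly, and the compatibility with relations has to be checked by hand. The degree assumption on $(\relsFstIndx, \relsSndIndx)$ is exactly what makes the discrepancy between $\DT/(\DI\cdot\DT+(Q))$ and $\DT/\DJJ$ invisible on the image of $\delta_1$, so beyond this bookkeeping I do not anticipate any essential obstacle.
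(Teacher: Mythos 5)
Your proof is correct and is essentially the same argument as the paper's, just packaged differently: the paper lifts $\psi$ through the whole presentation and observes that the induced map $\rho$ into $K = \DJJ/(\DI\DT+(Q))$ vanishes because $K_{(\relsFstIndx+\adx,\,\relsSndIndx+\bdx)}=0$, whereas you build the lift explicitly via the section $s$ on generators and check descent to $N$ using the same vanishing (phrased as $p$ being an isomorphism in that bidegree). Both hinge on exactly the same observation about the bigraded structure, so there is no substantive difference.
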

    \begin{proof}
        Let $K := \frac{\DJJ}{\DI\DT+(Q)}$. Take a lifting of
        $\psi$ to a homogeneous chain complex map
        \[
            \begin{tikzcd}
                &\bigoplus \DT(-\relsFstIndx, -\relsSndIndx) \arrow[d,
                "\rho"]\arrow[r, "\delta_1"] & \bigoplus_i
                \DT(-\gensFstIndx, -\gensSndIndx) \arrow[r, "\delta_0"]\arrow[d] & N
                \arrow[r]\arrow[d, "\psi"] & 0\\
                0\arrow[r] & K \arrow[r] & \frac{\DT}{\DI\DT + (Q)} \arrow[r] &
                \frac{\DT}{\DJJ}\arrow[r] & 0
            \end{tikzcd}
        \]
        The map $\rho$ maps the generator of $\DT(-\relsFstIndx, -\relsSndIndx)$ into
        $K_{\relsFstIndx + \adx, \relsSndIndx + \bdx}$. But $\relsFstIndx +
        \adx\leq a$ and $\relsSndIndx + \bdx \leq b$, hence $K_{\relsFstIndx + \adx, \relsSndIndx + \bdx}
        = 0$. Therefore $\rho = 0$, which completes the claim.
    \end{proof}

    Our second trick is as follows: suppose that $K \subset \DT$ is an ideal
    and $M$ is a $\DT$-module with $\depth(K, M) \geq 2$. Then $\Ext^i(\DT/K,
    M) = 0$ for $i=0,1$, see~\cite[Proposition~18.4]{Eisenbud}, so $M =
    \Hom_{\DT}(\DT, M)\to \Hom_{\DT}(K, M)$ is an isomorphism. To make the
    trick applicable to $K = \DI\cdot \DT + (Q)$, we give a lower bound of the
    depth of ${\DT/(\DI\cdot \DT + (Q))}$, as follows.
    \begin{lemma}\label{ref:Qregular:lem}
        Let $d = \depth(\DS_+, \DS/\DI)$.
        Suppose that $d\geq 2$ and $f_1, \ldots
        ,f_{d-1}\in
        \DS_+$ is a regular sequence on $\DS/\DI$ consisting of homogeneous
        elements. Then $Q$ is a non-zero
        divisor on both $\DT/(\DI\cdot \DT)$ and $\DT/(\DI\cdot \DT + f_1\cdot
        \DT)$. Moreover, $f_1, \ldots
        ,f_{d-1}$ is a regular sequence on $\DT/(\DI\cdot \DT + (Q))$.
    \end{lemma}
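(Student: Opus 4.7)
The plan is to reduce all three claims to McCoy's theorem — a polynomial $g \in R[\yy]$ is a nonzerodivisor iff no nonzero element of $R$ annihilates every coefficient of $g$ — combined with the permutability of homogeneous regular sequences in a positively graded Noetherian ring.

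The key identification is
\[
    \DT/(\DI\cdot\DT + (f_1,\ldots,f_i)) \;\cong\; \bigl(\DS/(\DI+(f_1,\ldots,f_i))\bigr)[\yy],
\]
valid because the ideal on the left is generated by elements of $\DS$; here the base has $\DS_+$-depth equal to $d-i$. For $i \leq d-1$ this depth is at least $1$, so $\mathrm{Ann}(\DS_+)$ in the base vanishes. Items (1) and (2) then follow by McCoy applied to $Q = x_1y_1 + \cdots + x_ny_n$: its $\yy$-coefficients are the $x_j$, which together generate $\DS_+$, so their joint annihilator in the base is zero in both cases $i=0$ (giving (1)) and $i=1$ (giving (2)).

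For (3), I first establish that $f_1,\ldots,f_{d-1},Q$ is a regular sequence on $\DT/(\DI\cdot\DT)$. The initial segment is regular because a regular sequence on $\DS/\DI$ lifts coefficientwise to any polynomial ring over $\DS/\DI$. The terminal element $Q$ is a nonzerodivisor on the final quotient by the McCoy argument at $i = d-1$, where the base has $\DS_+$-depth exactly $1$. I then invoke permutability of homogeneous regular sequences of positive-degree elements in the positively graded Noetherian ring $\DT/(\DI\cdot\DT)$ to move $Q$ to the front; this rewriting says precisely that $f_1,\ldots,f_{d-1}$ is a regular sequence on $\DT/(\DI\cdot\DT+(Q))$.

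The main obstacle is the permutation step in (3): it relies on the fact that in a positively graded Noetherian ring (equivalently, after localization at the irrelevant maximal ideal, in a Noetherian local ring), any permutation of a regular sequence of homogeneous elements is again regular. This is classical, so the argument should go through cleanly.
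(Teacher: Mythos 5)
Your proof is correct, and it takes a genuinely different route from the paper's. The paper's proof base-changes to an algebraically closed field so that prime avoidance produces a quadric $f_d = \sum x_il_i$ regular on $\DS/(\DI+(f_1,\dots,f_{d-1}))$, then runs the longer homogeneous regular sequence $(y_1-l_1,\dots,y_n-l_n,f_1,\dots,f_{d-1},Q)$ on $\DT/(\DI\cdot\DT)$ and reads off all three conclusions from permutability. Your argument replaces the ad hoc auxiliary sequence with McCoy's theorem applied to $Q\in\bigl(\DS/(\DI+(f_1,\dots,f_i))\bigr)[\yy]$: since the $\yy$-coefficients of $Q$ are the images of $x_1,\dots,x_n$, which generate $\DS_+$, and depth $\geq 1$ forces $\mathrm{Ann}(\DS_+)=0$ in the Noetherian base, McCoy gives regularity of $Q$ directly, for $i=0,1,\dots,d-1$; the lift of $f_1,\dots,f_{d-1}$ through the faithfully flat extension $\DS/\DI\hookrightarrow(\DS/\DI)[\yy]$ and a single permutation (pulling $Q$ to the front) finish item (3). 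This removes the need for base change and for the existence of a specific regular quadric, trading the paper's one-step permutation of a bespoke longer sequence for a repeated, uniform application of a standard elementary fact; both routes ultimately lean on the same permutability lemma for homogeneous regular sequences.

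One small remark: depth of the base at stage $i=d-1$ is only known to be $\geq 1$, not necessarily exactly $1$ (equality fails if the original depth exceeds $d$ was taken loosely), but that is all your McCoy step needs, so the argument is unaffected.
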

    \begin{proof}
        By base change, we may assume that $\kk$ is algebraically closed.

        The quotient module $M = \DS/(\DI + (f_1, \ldots
        ,f_{d-1}))$ has
        $\DS_+$-depth at least one, hence there exists a quadric $f_d = \sum_{i=1}^n
        x_il_i\in \DS_2$ which is a regular element for $M$. Thus, the
        sequence
        \[
            (y_1 - l_1, y_2 - l_2, \ldots , y_n - l_n, f_1, \ldots ,f_{d-1}, Q)
        \]
        is regular for the $\DT$-module $\DT/(\DI\cdot \DT)$. This sequence
        consists of elements homogeneous with respect to
        the total degree, hence each of its permutations is also a regular
        sequence~\cite[Theorem~16.3]{Matsumura_CommRing}. In particular,
        the sequences $(f_1, Q, f_2, \ldots , f_{d-1}, y_1 -l_1, \ldots , y_n
        - l_n)$ and
        $(Q, f_1, \ldots , f_{d-1}, y_1 - l_1, \ldots , y_n - l_n)$ are regular for $\DT/(\DI\cdot
        \DT)$. The first one
        implies that $Q$ is regular on $\DT/(\DI\cdot \DT + (f_1))$ and the second that $Q$ is regular on
        $\DT/(\DI\cdot \DT)$ and that $f_1, \ldots , f_{d-1}$ is regular on $\DT/(\DI\cdot \DT +
        (Q))$.
    \end{proof}
    Now we begin direct computations of specific degrees of the tangent space
    at $\DJJ$.

    \subsection{Negative tangents}\label{ssec:negativeTangents}

    In this section we verify that $\Spec(\DT/\DJJ)$ has TNT.
    \begin{lemma}[$\DI$-ignoring lemma]\label{ref:Iignore:lem}
        Suppose that $\depth(\DSplus, \DS/\DI) \geq 2$.  Let
        $\adx$, $\bdx$ be integers, with $\adx \leq -1$. Let $\varphi\in
        \Hom(\DJJ, \DT/\DJJ)_{(\adx,
        \bdx)}$ be a homomorphism such that $\varphi(\mmxpow) = 0$. Then
        $\varphi(I) = 0$.
    \end{lemma}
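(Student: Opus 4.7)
The plan is to show $\varphi(g)=0$ for every homogeneous $g\in \DI$ of $x$-degree $d$; since $\DI\cdot \DT$ is generated by such elements as a $\DT$-module and $\varphi$ is $\DT$-linear, this is enough. Several cases are trivial: if $d\geq a+1$ then $g\in \mmxpow$ and $\varphi(g)=0$ by hypothesis, and if $(d+\adx,\bdx)$ lies outside $[0,a]\times [0,b]$ then the target piece $(\DT/\DJJ)_{(d+\adx,\bdx)}$ vanishes. So I may assume $1\leq d\leq a$, $d+\adx\geq 0$ and $0\leq \bdx\leq b$.

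In this bidegree the section $s$ lifts $\varphi(g)$ uniquely to an element $\tilde v \in (\DT/(\DI\cdot \DT+(Q)))_{(d+\adx,\bdx)}$. The key move combines $\varphi(\mmxpow)=0$ with $\DT$-linearity: for any $m\in \mmx^{a+1-d}$ we have $mg\in \mmxpow$, so $m\varphi(g)=\varphi(mg)=0$ in $\DT/\DJJ$. Applying $p\colon \DT/(\DI\cdot \DT+(Q))\to \DT/\DJJ$ to $m\tilde v$ gives $p(m\tilde v)=m\varphi(g)=0$; the element $m\tilde v$ sits in bidegree $(a+1+\adx,\bdx)$, and because $\adx\leq -1$ we have $a+1+\adx\leq a$, placing this bidegree in the range where $p$ is an isomorphism. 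Therefore $m\tilde v=0$ in $\DT/(\DI\cdot \DT+(Q))$, so $\mmx^{a+1-d}$ annihilates $\tilde v$.

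To conclude I would invoke Lemma~\ref{ref:Qregular:lem}: the depth hypothesis produces a homogeneous $f\in \DSplus$ that is regular on $\DS/\DI$, and the lemma promotes this to $f$ being a non-zero-divisor on $\DT/(\DI\cdot \DT+(Q))$. Choosing $N$ with $f^N\in \mmx^{a+1-d}$ forces $f^N\tilde v=0$, hence $\tilde v=0$ by regularity of $f^N$, and therefore $\varphi(g)=0$.

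I expect the only non-routine point to be the bidegree bookkeeping: one must verify that $m\tilde v$ still lives in the isomorphism range of $s$, which is exactly where the hypothesis $\adx\leq -1$ enters decisively. Were $\adx$ equal to $0$ the bidegree $(a+1,\bdx)$ would escape this range and the argument would collapse. Beyond this, the proof is a clean reduction to the non-zero-divisor assertion of Lemma~\ref{ref:Qregular:lem}.
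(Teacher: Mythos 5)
Your proof is correct and follows essentially the same route as the paper's: fix a homogeneous generator of $\DI$ of low degree, multiply by $\mmx^{a+1-d}$ to land in the degree range where $\DT/(\DI\cdot\DT+(Q)) \to \DT/\DJJ$ is an isomorphism (using $\adx\leq -1$), lift, and then use the non-zero-divisor/depth statement from Lemma~\ref{ref:Qregular:lem} to conclude the lift vanishes. The only cosmetic difference is that you spell out the degenerate bidegree cases explicitly, which the paper leaves implicit.
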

    \begin{proof}
        Choose any homogeneous $i\in \DI$ and let
        $a'$ be its degree. If $a'\geq
        a+1$, then $i\in \mmxpow$ and so $\varphi(i) = 0$. Otherwise, we have
        $\mmx^{a+1-a'}i \subset \mmxpow$, so
        \[
            0 = \varphi\left( \mmx^{a+1-a'}i \right) = \mmx^{a+1-a'}\varphi(i).
        \]
        But $\deg(\DS_{a+1-a'}\varphi(i)) = (a+1+\adx, \bdx)$ and $a+1+\adx <
        a+1$. Therefore,
        \[
            \left(\frac{\DT}{\DI\cdot \DT + (Q)}\right)_{(a+1+\adx,\, \bdx)} \to
            \left(\frac{\DT}{\DJJ}\right)_{(a+1+\adx,\, \bdx)}
        \]
        is an isomorphism. Hence, $\varphi(i)$ uniquely lifts to an element
        $F$ of $\DT/(\DI\cdot \DT + (Q))$ such that $\mmx^{a+1-a'} F = 0$.
        By Lemma~\ref{ref:Qregular:lem}, we have
        $\depth(\mmx, \DT/(\DI\cdot \DT + (Q))) > 0$, hence $F = 0$, so
        $\varphi(i) = 0$. As $i\in I$ was chosen arbitrarily, we have $\varphi(I) = 0$.
    \end{proof}

    \begin{lemma}[depth lemma]\label{ref:depth:lem}
        Suppose that $\depth(\DSplus, \DS/\DI) \geq 3$ and $n\geq 3$. Fix
        $(\adx, \bdx)$ so that $\adx < 0$ or $\bdx < 0$.
        Let $\ppx$, $\ppy$ be homogeneous ideals with radicals $\mmx$, $\mmy$
        respectively.
        Then
        \[
            \Hom\left(\ppx, \frac{\DT}{I\cdot \DT + (Q)}\right)_{(\adx,
            \bdx)} = 0
            \quad\mbox{and}\quad
            \Hom\left(\ppy, \frac{\DT}{I\cdot \DT + (Q)}\right)_{(\adx,
            \bdx)} = 0
        \]
    \end{lemma}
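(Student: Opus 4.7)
The plan is to apply the depth-Ext trick described in the paragraph just preceding the statement: if $K \subset \DT$ is an ideal and $M$ is a $\DT$-module with $\depth(K, M) \geq 2$, then the natural homothety map $M \to \Hom_{\DT}(K, M)$ sending $m$ to multiplication by $m$ is an isomorphism. This map is bigraded of degree $(0, 0)$, so it identifies $\Hom_{\DT}(K, M)_{(\alpha, \beta)}$ with $M_{(\alpha, \beta)}$. Take $M := \DT/(\DI \cdot \DT + (Q))$. Since $\DI \cdot \DT$ and $(Q)$ are generated by bihomogeneous elements of bidegrees $(\ast, 0)$ and $(1, 1)$ respectively, $M$ is concentrated in non-negative bidegrees, so $M_{(\alpha, \beta)} = 0$ whenever $\alpha < 0$ or $\beta < 0$. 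Thus the lemma reduces to the depth bounds $\depth(\ppx, M) \geq 2$ and $\depth(\ppy, M) \geq 2$; and since depth depends only on the radical, these are the same as $\depth(\mmx, M) \geq 2$ and $\depth(\mmy, M) \geq 2$.

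The bound on $\depth(\mmx, M)$ follows directly from Lemma~\ref{ref:Qregular:lem}. By assumption $d := \depth(\DSplus, \DS/\DI) \geq 3$, so there is a homogeneous regular sequence $f_1, \ldots, f_{d-1} \in \DSplus$ on $\DS/\DI$, and the cited lemma tells us $f_1, \ldots, f_{d-1}$ remains regular on $M$. Since $\DSplus \cdot \DT = \mmx$, at least two of these $f_i$ lie in $\mmx$, giving the required depth bound.

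The bound on $\depth(\mmy, M)$ uses a different route. Because $\DI$ does not involve the $y_i$, the quotient $\DT/(\DI \cdot \DT)$ equals the polynomial ring $(\DS/\DI)[y_1, \ldots, y_n]$, so $y_1, \ldots, y_n$ is a regular sequence on it, whence $\depth(\mmy, \DT/(\DI \cdot \DT)) \geq n \geq 3$. By Lemma~\ref{ref:Qregular:lem}, $Q$ is regular on $\DT/(\DI \cdot \DT)$, and $Q \in \mmy$, so reducing modulo $Q$ drops the $\mmy$-depth by exactly one, giving $\depth(\mmy, M) \geq n - 1 \geq 2$. The only point requiring care is the asymmetry between the two arguments: the $\mmx$-bound draws on the depth hypothesis on $\DS/\DI$, whereas the $\mmy$-bound draws on the free polynomial extension by the $y_i$ together with $n \geq 3$, which leaves room to cut by $Q$. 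Once both depth estimates are in place, the conclusion follows immediately from the Ext-vanishing formalism recalled above.
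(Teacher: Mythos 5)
Your proposal is correct and takes essentially the same approach as the paper: apply the depth-Ext trick via $\Hom_{\DT}(\DT,M)\to\Hom_{\DT}(\ppx,M)$ with $M=\DT/(\DI\cdot\DT+(Q))$, reduce to a depth bound, and invoke Lemma~\ref{ref:Qregular:lem}. The only difference is that for the $\ppy$-bound you spell out the cut-by-$Q$ step ($\depth(\mmy,\DT/\DI\cdot\DT)=n$, then drop one for $Q\in\mmy$), whereas the paper just asserts that $(y_1,y_2)$ is regular on $M$; your version is a bit more explicit but the underlying reasoning is the same.
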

    \begin{proof}
        Let $M =  \DT/(I\cdot \DT + (Q))$ viewed as a $\DT$-module.
        By Lemma~\ref{ref:Qregular:lem}, $\depth\left(\ppx, M\right) \geq 2$.
        Hence $\Ext^i(\DT/\ppx, M) = 0$ for $i = 0, 1$, so
        \[
            \Hom_{\DT}(\DT, M) \to \Hom_{\DT}(\ppx, M)
        \]
        is an isomorphism. But $\Hom_{\DT}(\DT, M) =M$ has no non-zero
        elements of degree $(\adx, \bdx)$ with $\adx < 0$ or $\bdx < 0$.  The
        same argument applies with $\ppy$ instead of $\ppx$; the depth assumption is
        satisfied as the sequence $(y_1, y_2)$ is regular.
    \end{proof}

    \begin{corollary}\label{ref:verynegative:cor}
        Suppose that $\depth(\DSplus, \DS/\DI) \geq 3$ and $n\geq 3$. Then
        $(\tang{\DJJ})_{(\adx, \bdx)} = 0$ for $\adx\leq -2$ or $\bdx\leq -2$.
    \end{corollary}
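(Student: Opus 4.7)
The plan is to take an arbitrary $\varphi \in \Hom_{\DT}(\DJJ, \DT/\DJJ)_{(\alpha, \beta)}$ with $\alpha \leq -2$ or $\beta \leq -2$ and to show that $\varphi$ kills each of the four generating families of $\DJJ$: elements of $\DI \cdot \DT$, of $\mmxpow$, of $\mmypow$, and the quadric $Q$. The main tool is the observation that $(\DT/\DJJ)_{(\gamma, \delta)} = 0$ outside the box $0 \leq \gamma \leq a$, $0 \leq \delta \leq b$, which will make most of these generators vanish for purely numerical reasons.

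In the case $\alpha \leq -2$, the images $\varphi(Q)$ and $\varphi(\mmypow)$ land in bidegrees $(\alpha + 1, \beta + 1)$ and $(\alpha, \beta + b + 1)$ respectively, so they vanish since $\alpha + 1 \leq -1$ and $\alpha \leq -2$. The nontrivial work is on $\varphi|_{\mmxpow}$. If $\beta \notin [0, b]$ this too vanishes trivially; otherwise I would apply Lemma~\ref{ref:lifting:lem} to $N = \mmxpow$: since $\mmxpow$ is the extension of scalars of the corresponding $\DS$-ideal, its first syzygies over $\DT$ sit in bidegree $(a+2, 0)$, and the lifting hypothesis $a+2+\alpha \leq a$ is exactly $\alpha \leq -2$. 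This produces a lift $\tilde\varphi\colon \mmxpow \to \DT/(\DI\cdot \DT + (Q))$ of bidegree $(\alpha, \beta)$, which must vanish by Lemma~\ref{ref:depth:lem} applied with $\ppx = \mmxpow$. Hence $\varphi(\mmxpow) = 0$, and the $\DI$-ignoring Lemma~\ref{ref:Iignore:lem} (applicable because $\alpha \leq -1$ and $\depth(\DSplus, \DS/\DI) \geq 3 \geq 2$) yields $\varphi(\DI) = 0$.

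The case $\beta \leq -2$ is symmetric, and in fact slightly easier: $\varphi(\DI)$, $\varphi(\mmxpow)$, and $\varphi(Q)$ all vanish automatically because their target bidegrees have second coordinate $\beta$, $\beta$, and $\beta + 1$ respectively, all of which are negative. Only $\varphi|_{\mmypow}$ requires argument, and this is handled by the same two-step lift: the first syzygies of $\mmypow$ live in bidegree $(0, b+2)$ so Lemma~\ref{ref:lifting:lem} applies thanks to $b+2+\beta \leq b$, after which Lemma~\ref{ref:depth:lem} with $\ppy = \mmypow$ concludes.

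I do not expect a serious obstacle here: Lemmas~\ref{ref:lifting:lem}, \ref{ref:depth:lem}, and \ref{ref:Iignore:lem} prepared earlier are calibrated precisely for negative bidegrees of this shape, and the corollary is essentially a bookkeeping exercise that orchestrates them. The only minor care needed is to split off the trivial subcases where the target bidegree already lies outside the support of $\DT/\DJJ$ so that no lifting is required.
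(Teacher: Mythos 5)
Your proof is correct and follows essentially the same route as the paper: first killing $Q$, $\mmypow$ (resp.\ $\DI$, $\mmxpow$, $Q$) by degree support, then lifting $\varphi$ on $\mmxpow$ (resp.\ $\mmypow$) via Lemma~\ref{ref:lifting:lem}, vanishing via the depth bound of Lemma~\ref{ref:depth:lem}, and finishing with the $\DI$-ignoring Lemma~\ref{ref:Iignore:lem} in the $\alpha\le -2$ case. Your explicit handling of the trivial subcases $\beta\notin[0,b]$ (and symmetrically $\alpha\notin[0,a]$) is a slight tidying-up of a point the paper leaves implicit, but the argument is the same.
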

    \begin{proof}
        Consider the case $\adx \leq -2$. Take an element $\varphi\in\Hom(\DJJ,
        \DT/\DJJ)_{(\adx, \bdx)}$. By degree reasons, $\varphi$ sends $\mmypow$ and $Q$ to zero.
        Consider $\varphi' = \varphi_{|\mmxpow}\colon \mmxpow \to
        \DT/\DJJ$.
        This map sends generators of $\mmxpow$ to elements of $(\DT/J)_{(\leq
        a-1, \leq b-1)}$ and the syzygies of $\mmxpow$ are linear, of degree
        $(1, 0)$. Hence, by Lemma~\ref{ref:lifting:lem}, the map $\varphi'$ lifts to a map
        \[
            \varphi''\colon \mmxpow \to \frac{\DT}{I\cdot \DT + (Q)},
        \]
        of degree $(\adx, \bdx)$ with $\adx \leq -2$. Such a map is zero by
        Lemma~\ref{ref:depth:lem} applied to $\ppx = \mmxpow$. Therefore, $\varphi(\mmxpow) = 0$.
        From Lemma~\ref{ref:Iignore:lem} it follows that $\varphi(I) = 0$,
        hence $\varphi = 0$. The case $\beta\leq -2$ is symmetric, minus the
        use of Lemma~\ref{ref:Iignore:lem}.
    \end{proof}

    Now we will analyse homomorphisms of degree $(\adx, \bdx)$ with $\adx +
    \bdx = -1$. These do exist (e.g.~the tangents corresponding to
    the $\Gadd^{2n}$-action by translations), so the depth considerations as above
    are not directly applicable.
    \begin{lemma}\label{ref:quadricx:lemma}
        Suppose that $\depth(\DSplus, \DS/\DI) \geq 2$.
        Then
        \[
            \Hom_{\DT}\left( \frac{\mmxpow}{\mmxpow \cap (Q)}, \frac{\DT}{\DJJ}
            \right)_{(-1, 0)} = 0.
        \]
    \end{lemma}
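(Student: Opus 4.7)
The plan is to reduce the claim to the vanishing of $\Hom_{\DS}(\mmx, \DS/\DI)_{-1}$, which follows immediately from the depth hypothesis. Represent $\varphi$ by the tuple $g_\gamma := \varphi(\overline{x^\gamma}) \in (\DT/\DJJ)_{(a,0)} = (\DS/\DI)_a$ indexed by $|\gamma| = a+1$. Since $\mmxpow \cap (Q)$ is generated as a $\DT$-module by the elements $x^\beta Q$ with $|\beta| = a$, the condition that $\varphi$ descends to $\mmxpow/(\mmxpow \cap (Q))$ amounts to $\sum_i y_i g_{\beta + e_i} = 0$ in $(\DT/\DJJ)_{(a,1)}$ for every such $\beta$. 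A direct computation gives $\DJJ_{(a,1)} = \bigoplus_i \DI_a y_i + Q \cdot \DS_{a-1}$, so this translates to the existence, for each $\beta$, of an element $f_\beta \in (\DS/\DI)_{a-1}$ with $g_{\beta + e_i} = x_i f_\beta$ in $(\DS/\DI)_a$ for every $i$. Uniqueness of $f_\beta$ modulo $\DI_{a-1}$ follows from $\DI : \mmx = \DI$, which holds because $\depth \geq 1$.

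Comparing the two factorisations $\gamma = \beta + e_i = (\beta + e_i - e_j) + e_j$ of a monomial of degree $a+1$ yields $x_i f_\beta = x_j f_{\beta + e_i - e_j}$ in $(\DS/\DI)_a$. By linearity, the map $F \colon \DS_a \to (\DS/\DI)_{a-1}$ defined by $F(x^\beta) := f_\beta$ satisfies the Koszul-type compatibility
\[
    x_i F(x_j h) = x_j F(x_i h) \qquad\text{for all } h \in \DS_{a-1} \text{ and all } i, j.
\]

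The core of the argument is to show $F \equiv 0$ using $\depth(\DSplus, \DS/\DI) \geq 2$. After base change we may assume $\kk$ is infinite, and after a simultaneous linear change of coordinates in $\mathbf{x}$ together with the dual change in $\mathbf{y}$ (which preserves $\mmxpow$, $\mmypow$ and $Q$, and replaces $\DI$ by an ideal with the same depth) we may assume $(x_1, x_2)$ is a regular sequence on $\DS/\DI$. The compatibility with $(i,j)=(2,1)$ gives $x_2 F(x_1 h) = x_1 F(x_2 h) \in (x_1)$, and since $x_2$ is a non-zero-divisor on $\DS/(\DI + (x_1))$ we conclude $F(x_1 h) \in x_1\, (\DS/\DI)$. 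As $x_1$ is a non-zero-divisor on $\DS/\DI$, we may define $F^{(1)}(h) := F(x_1 h)/x_1 \in (\DS/\DI)_{a-2}$; the compatibility then yields $F(x_j h) = x_j F^{(1)}(h)$ for every $j$, and a direct check shows $F^{(1)}$ inherits the same Koszul compatibility at one lower degree. Iterating produces maps $F^{(k)} \colon \DS_{a-k} \to (\DS/\DI)_{a-k-1}$ that patch into a single graded $\DS$-linear map $\widetilde F \colon \mmx \to \DS/\DI$ of degree $-1$ with $\widetilde F|_{\DS_a} = F$.

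Finally, apply $\Hom_{\DS}(-, \DS/\DI)$ to $0 \to \mmx \to \DS \to \kk \to 0$: since $\depth(\DSplus, \DS/\DI) \geq 2$ gives $\Ext^i_{\DS}(\kk, \DS/\DI) = 0$ for $i \leq 1$, we obtain $\Hom_{\DS}(\mmx, \DS/\DI) \cong \DS/\DI$, whose degree $-1$ part vanishes. Hence $\widetilde F = 0$, so $F = 0$, so $g_\gamma = x_i f_{\gamma - e_i} = 0$ for every $\gamma$, so $\varphi = 0$. The main obstacle is the patching step producing $\widetilde F$: at each iteration one must verify both that the Koszul compatibility is inherited by $F^{(k)}$ and that the graded pieces glue into an honest $\DS$-linear homomorphism, with the regular sequence $(x_1, x_2)$ entering every such verification.
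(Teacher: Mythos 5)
Your proposal is correct, and it coincides with the paper's proof in its first two-thirds: you and the paper both rewrite the hypothesis $\varphi(x^\beta Q)=0$ as the existence of $f_\beta\in(\DS/\DI)_{a-1}$ with $g_{\beta+e_i}=x_i f_\beta$, define the linear map $F\colon\DS_a\to(\DS/\DI)_{a-1}$, and verify the Koszul-type compatibility $x_iF(x_jh)=x_jF(x_ih)$. You diverge at the final step. The paper observes that the Koszul compatibilities are precisely the presentation of the $\DS$-module $\DS_{\geq a}=\mmx^a$ (which has a linear resolution), so $F$ extends directly to an $\DS$-linear $\psi'\colon\DS_{\geq a}\to\DS/\DI$ of degree $-1$; it then kills $\psi'$ by noting $\Hom(\DS_{\geq a},\DS/\DI)\cong\DS/\DI$, which is non-negatively graded, using $\Ext^{0,1}(\DS/\DS_{\geq a},\DS/\DI)=0$. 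You instead push $F$ \emph{downward} to an $\DS$-linear $\widetilde F\colon\mmx\to\DS/\DI$ by iterated division by $x_1$, which needs a base change to an infinite field and a generic change of coordinates so that $(x_1,x_2)$ is a regular sequence, and then conclude from $\Hom(\mmx,\DS/\DI)\cong\DS/\DI$. Both endings are valid, but yours is considerably more work than necessary: you re-derive by hand, through the division lemma, essentially what the $\Ext$-argument gives for free, and the base change and coordinate choice are extra moving parts. The upward extension to $\DS_{\geq a}$ (using only the Koszul presentation of a power of the maximal ideal) replaces the whole iteration.
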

    \begin{proof}
        Let
        \[
            \varphi\in \Hom_{\DT}\left( \frac{\mmxpow}{\mmxpow \cap (Q)}, \frac{\DT}{\DJJ}
            \right)_{(-1, 0)}
        \]
        Take the
        unique lift of $\varphi$ to a linear map $\varphi'\colon \DS_{a+1} \to
        (\DS/\DI)_a$.
        Let $m\in \DS_{a}$ be a monomial. Then $\deg(mQ) = (a+1, 1)$ and
        \[
            0 = \varphi(mQ) = \sum y_i\varphi(mx_i),
        \]
        hence there exists a form $n_m\in (\DS/I)_{a-1}$ such that
        \[
            \sum y_i\varphi'(mx_i) \equiv Qn_m = \sum y_i x_in_m \mod
            \DI\DT+\mmxpow,
        \]
        so $\varphi(mx_i) \equiv x_in_m\mod \DI\DT+\mmxpow$.
        We define a
        linear map $\psi\colon \DS_a \to (\DS/\DI)_{a-1}$ by $\psi(m) := n_m$.
        Suppose that $m_1, m_2\in \DS_{a}$ are monomials such that $x_im_1 =
        x_jm_2$ for some $i, j$. Then
        \begin{align*}
            x_i\psi(m_1) - x_j\psi(m_2) = x_in_{m_1} - x_jn_{m_2} &\equiv
            \varphi(x_im_1) - \varphi(x_jm_2) \\ &= \varphi(x_im_1 - x_jm_2) =
            \varphi(0) = 0\mod \DI\DT+\mmxpow.
        \end{align*}
        But $\deg(x_i\psi(m_1) - x_j\psi(m_2)) = a$, hence $x_i\psi(m_1) -
        x_j\psi(m_2)\in \DI\DT$
        and so $\psi$ extends to a $\DS$-module homomorphism $\psi'\colon
        \DS_{\geq a} \to \DS/\DI$ of degree $-1$. But $\depth(\DS_+, \DS/\DI)
        \geq 2$, so $\Ext^i(\DS/\DS_{\geq a}, \DS/\DI) = 0$ for $i=1,2$, hence
        $\Hom(\DS, \DS/\DI) \to \Hom(\DS_{\geq a}, \DS/\DI)$ is an
        isomorphism. In particular, there are no homomorphisms of negative
        degrees, so $\psi' = 0$. Accordingly, $\psi = 0$, hence $\varphi = 0$.
    \end{proof}

    \begin{lemma}\label{ref:quadricy:lemma}
        Let $n\geq 2$. Then
        \[
            \Hom_{\DT}\left( \frac{\mmypow}{\mmypow \cap (Q)}, \frac{\DT}{\DJJ}
            \right)_{(0, -1)} = 0.
        \]
    \end{lemma}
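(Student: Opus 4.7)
The plan is to mirror the proof of Lemma~\ref{ref:quadricx:lemma} with the roles of the $x$- and $y$-variables interchanged: the polynomial ring $\kk[\yy]$ (whose depth is $n$) takes the role played there by $\DS/\DI$, and the hypothesis $n \geq 2$ is the exact analogue of the depth-$2$ assumption of the $x$-case.

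First I would observe that $(\DT/\DJJ)_{(0, b)} = \kk[\yy]_b$, because in bidegree $(0, b)$ none of $\DI\cdot \DT$, $\mmxpow$, $\mmypow$, or $(Q)$ contribute to $\DJJ$. A homomorphism $\varphi \in \Hom_{\DT}(\mmypow/(\mmypow \cap (Q)), \DT/\DJJ)_{(0, -1)}$ is thus determined by a linear map $\varphi'\colon \kk[\yy]_{b+1} \to \kk[\yy]_b$ on the generators. For every monomial $m \in \kk[\yy]_b$ the product $Qm = \sum_i x_i(y_im)$ lies in $\mmypow \cap (Q)$, since each $y_im$ lies in $\mmypow$; hence $0 = \varphi(Qm) = \sum_i x_i\varphi'(y_im)$ in $(\DT/\DJJ)_{(1, b)}$. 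Computing $\DJJ_{(1, b)} = \DI_1 \otimes \kk[\yy]_b + Q \cdot \kk[\yy]_{b-1}$, this yields a form $n_m \in \kk[\yy]_{b-1}$ with
\[
\sum_i x_i\bigl(\varphi'(y_im) - y_in_m\bigr) \in \DI_1 \otimes \kk[\yy]_b,
\]
and matching coefficients of the $x_i$ modulo $\DI_1$ gives $\varphi'(y_im) = y_in_m$ for every $i$.

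Next I would define $\psi\colon \kk[\yy]_b \to \kk[\yy]_{b-1}$ by $\psi(m) := n_m$. The Koszul-type identities $y_im = y_jm'$ force $y_i\psi(m) = \varphi'(y_im) = \varphi'(y_jm') = y_j\psi(m')$, so $\psi$ extends to a $\kk[\yy]$-linear map from the ideal $(y_1, \ldots, y_n)^b \subset \kk[\yy]$ to $\kk[\yy]$ of degree $-1$. Since $\depth((y_1, \ldots, y_n), \kk[\yy]) = n \geq 2$, the groups $\Ext^i(\kk[\yy]/(y_1, \ldots, y_n)^b, \kk[\yy])$ vanish for $i = 0, 1$, so $\psi$ extends further to a degree $-1$ map $\kk[\yy] \to \kk[\yy]$; any such map is necessarily zero. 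Hence $\psi = 0$, $\varphi' = 0$, and $\varphi = 0$.

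The main obstacle is the coefficient-matching step in the second paragraph. In the $x$-case of Lemma~\ref{ref:quadricx:lemma} the $y_i$ are $\DS/\DI$-linearly independent, which makes the analogous step immediate; here the $x_i$ may satisfy nontrivial linear relations modulo $\DI_1$, so one must either use the remaining freedom in the choice of $n_m$ to absorb any $\DI_1$-contribution, or invoke the hypothesis met in the applications to Proposition~\ref{ref:retractionFromFrame:prop} (where $\DI_2 = 0$ and $\DI$ is generated in sufficiently high degrees) which forces $\DI_1 = 0$ and makes the matching trivial.
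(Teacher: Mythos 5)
Your approach is precisely what the paper intends by its one-line proof (``repeat Lemma~\ref{ref:quadricx:lemma} with $\xx$ interchanged with $\yy$ and $I=0$''), and the steps you give --- the identification $(\DT/\DJJ)_{(0,b)} = \kk[\yy]_b$, the lift $\varphi'$, the $Q$-syzygy $Qm = \sum_i x_i(y_im)$ giving $n_m \in \kk[\yy]_{b-1}$, the $\kk[\yy]$-linear extension $\psi$ to $\mmy^b$, and the depth argument $\depth(\mmy,\kk[\yy]) = n \geq 2$ --- are all correct and complete the proof when $\DI_1 = 0$.

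You are right that the coefficient-matching step is a genuine subtlety that the paper glosses over, and your diagnosis is accurate. When $\DI_1 \neq 0$ the inclusion $\sum_i x_i\bigl(\varphi'(y_im) - y_in_m\bigr) \in \DI_1 \otimes \kk[\yy]_b$ does not force each $\varphi'(y_im) - y_in_m$ to vanish, because the values live in $\kk[\yy]_b$ and cannot absorb the $\DI_1$-contribution the way $(\DS/\DI)_a$ does in the $x$-case. In fact the lemma as literally stated \emph{fails} for $\DI_1 \neq 0$: taking $n=4$, $\DI = (x_1)$, $b=1$, the map sending $y_1^2 \mapsto y_1$ and all other quadratic monomials in $\yy$ to zero respects all Koszul and $Q$-syzygies (since $x_1y_1 \in \DJJ_{(1,1)}$) and so is a nonzero element of the Hom space. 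Of your two proposed fixes, the second one is the right one: in every application $\DI_2 = 0$ is in force (Proposition~\ref{ref:retractionFromFrame:prop}, and in the proof of Theorem~\ref{ref:mainthm:thm} the ideal $\DI$ is generated in degree $\geq M \gg 0$), which forces $\DI_1 = 0$. Your first suggested fix --- using ``remaining freedom in $n_m$'' --- does not work: the element $n_m$ is already uniquely determined even when $\DI_1 \neq 0$, because $Q(n_m - n'_m) \in \DI_1 \otimes \kk[\yy]_b$ compels $n_m = n'_m$ on inspection of the $x_i$-coefficients for $x_i \notin \DI_1$. So there is no slack there to exploit.

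In short: same route as the paper, with an accurate and useful flag of a hypothesis ($\DI_1 = 0$) that the paper leaves implicit in the lemma statement but which holds throughout its use.
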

    \begin{proof}
        The proof of Lemma~\ref{ref:quadricx:lemma} can be repeated with $\xx$
        interchanged with $\yy$ and $I = 0$.
    \end{proof}

    Let $\partial_{x_1}, \ldots , \partial_{x_n}, \partial_{y_1}, \ldots
    ,\partial_{y_n}$ be the derivations with respect to variables of
    $\DT$. By Leibniz's rule, each
    such derivation $\partial$ induces a $\DT$-linear map $\DJJ\to \DT/\DJJ$.
    This map is an element of the tangent space $\tang{\DJJ}$. By slight abuse, we
    denote it by
    $\partial$. Geometrically, these
    elements arise from the action of $\Gadd^{2n}$ on $\mathbb{A}^{2n} =
    \Spec(\DT)$ by translation.
    \begin{corollary}\label{ref:minusOneTangents:cor}
        Suppose that $\depth(\DSplus, \DS/\DI) \geq 2$ and $n\geq 2$. Then
        \[
            (\tang{\DJJ})_{(-1, 0)} = \spann{\partial_{x_1}, \ldots ,
            \partial_{x_n}}\quad\mbox{and}\quad(\tang{\DJJ})_{(0, -1)} =
            \spann{\partial_{y_1}, \ldots , \partial_{y_n}}
        \]
    \end{corollary}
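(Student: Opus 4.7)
The plan is to show that the translation tangents $\partial_{x_i}$ (resp.\ $\partial_{y_j}$) span the relevant graded piece by a subtract-and-kill strategy: given an arbitrary $\varphi$ of degree $(-1,0)$, I will subtract off the translation tangents needed to make $\varphi(Q)=0$, and then show that the remainder must vanish on all generators of $\DJJ$.

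First I would verify that $\partial_{x_1},\ldots,\partial_{x_n}$ are linearly independent elements of $(\tang{\DJJ})_{(-1,0)}$: since $\partial_{x_j}(Q)=y_j$ and $(\DT/\DJJ)_{(0,1)}=\spann{y_1,\ldots,y_n}$ (because $\DJJ$ has no generators in degree $(0,1)$), any relation $\sum c_j\partial_{x_j}=0$ forces $\sum c_j y_j=0$ in $\DT/\DJJ$, hence all $c_j=0$. This computation also pins down the coefficients in the main step: for a general $\varphi\in\Hom_{\DT}(\DJJ,\DT/\DJJ)_{(-1,0)}$, the element $\varphi(Q)\in (\DT/\DJJ)_{(0,1)}$ can be written uniquely as $\sum c_j y_j$, and we set $\widetilde{\varphi}:=\varphi-\sum c_j\partial_{x_j}$, which satisfies $\widetilde{\varphi}(Q)=0$.

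It then remains to check that $\widetilde{\varphi}$ vanishes on the remaining generators of $\DJJ$. On $\mmypow$ this is automatic by degree, since the images would live in $(\DT/\DJJ)_{(-1,b+1)}=0$. On $(Q)$ it holds by $\DT$-linearity from $\widetilde{\varphi}(Q)=0$. For $\mmxpow$ the key observation is that $\widetilde{\varphi}|_{\mmxpow}$ annihilates $\mmxpow\cap (Q)$ and therefore factors through $\mmxpow/(\mmxpow\cap (Q))$; now Lemma~\ref{ref:quadricx:lemma} (which is the crucial ingredient and uses $\depth(\DS_+,\DS/\DI)\geq 2$) gives that this factored map is zero. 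Finally, having established $\widetilde{\varphi}(\mmxpow)=0$, Lemma~\ref{ref:Iignore:lem} (the $\DI$-ignoring lemma) yields $\widetilde{\varphi}(\DI\cdot\DT)=0$. Thus $\widetilde{\varphi}$ kills all generators of $\DJJ$ and $\varphi=\sum c_j\partial_{x_j}$.

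The second equality is handled by the mirrored argument, swapping the roles of $\xx$ and $\yy$: for $\varphi$ of degree $(0,-1)$, expand $\varphi(Q)=\sum c_j x_j$ in $(\DT/\DJJ)_{(1,0)}$, set $\widetilde{\varphi}:=\varphi-\sum c_j\partial_{y_j}$, check that $\widetilde{\varphi}$ vanishes on $\mmxpow$ and on $\DI\cdot\DT$ purely by degree (both land in a piece with negative $\yy$-degree), and invoke Lemma~\ref{ref:quadricy:lemma} for $\mmypow$ via the factorization through $\mmypow/(\mmypow\cap(Q))$. The main obstacle is exactly the $\mmxpow$ step of the first equality: this is the only piece where neither a depth argument nor a crude degree argument suffices, and it is precisely what Lemma~\ref{ref:quadricx:lemma} was designed to resolve.
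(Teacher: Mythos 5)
Your proof is correct and follows the paper's own argument almost verbatim: subtract off the unique linear combination of translation tangents to kill $\varphi(Q)$, then invoke Lemma~\ref{ref:quadricx:lemma} for $\mmxpow$, Lemma~\ref{ref:Iignore:lem} for $\DI$, and degree reasons for $\mmypow$, and mirror for the $(0,-1)$ case. The only differences are cosmetic: you spell out the linear independence of the $\partial_{x_j}$ and the factoring of $\widetilde{\varphi}|_{\mmxpow}$ through $\mmxpow/(\mmxpow\cap(Q))$, and you make explicit (where the paper merely says \emph{symmetric}) that in the $(0,-1)$ case $\DI\cdot\DT$ is killed purely by degree rather than via the $\DI$-ignoring lemma.
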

    \begin{proof}
        Choose any homomorphism $\varphi\colon \DJJ\to \DT/\DJJ$ of degree
        $(-1, 0)$. Then $\varphi(Q) \in \spann{y_1, \ldots ,y_n}$. But
        $\partial_{x_i}(Q) = y_i$, hence there is a unique linear combination
        $D$ of $\left\{ \partial_{x_i} \right\}$ such that $(\varphi-D)(Q) =
        0$. Replacing $\varphi$ by $\varphi-D$, we may assume $\varphi(Q) =
        0$. By Lemma~\ref{ref:quadricx:lemma}, we have $\varphi(\mmxpow) = 0$.
        By Lemma~\ref{ref:Iignore:lem}, we have $\varphi(I) = 0$. Finally,
        $\varphi(\mmypow) = 0$ by degree reasons, so $\varphi = 0$ and
        the claim follows for $(\tang{\DJJ})_{(-1, 0)}$. The argument for
        degree $(0, -1)$ is symmetric.
    \end{proof}

    Out of all tangents of negative degrees, there is a single degree left to
    consider: $(-1, -1)$.
    \begin{lemma}[$(-1, -1)$ tangents]\label{ref:minusminusTangents:lem}
        Suppose that $\mmx^{a} \not\subset \DI$.
        Then $(\tang{\DJJ})_{(-1, -1)} = 0$.
    \end{lemma}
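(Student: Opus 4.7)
The strategy is to exploit the bigrading to reduce $\varphi$ to a single scalar $c := \varphi(Q) \in \kk$, and then use the hypothesis $\mmx^a \not\subset \DI$ to force $c = 0$.

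First I would check that the bigrading annihilates $\varphi$ on every bihomogeneous generator of $\DJJ$ except $Q$: an element of $\DI$ has bidegree $(*, 0)$, so $\varphi$ sends it into bidegree $(*, -1)$ of $\DT/\DJJ$, which vanishes; the generators of $\mmxpow$ and $\mmypow$ land in bidegrees $(a, -1)$ and $(-1, b)$, both zero. Hence $\varphi$ is determined by $\varphi(Q)$, which lies in $(\DT/\DJJ)_{(0,0)} = \kk$; write $\varphi(Q) = c$.

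The central step is a single relation evaluated in two ways. For every monomial $m \in \mmx^a$, the element $mQ = \sum_i y_i (x_i m)$ lies in $\DJJ$ because each $x_i m \in \mmxpow$. By $\DT$-linearity, the factorization $m \cdot Q$ yields $\varphi(mQ) = cm$ in $\DT/\DJJ$, while the sum decomposition gives $\varphi(mQ) = \sum_i y_i \varphi(x_i m) = 0$, since each $\varphi(x_i m)$ sits in the zero piece of bidegree $(a, -1)$. Equating yields $c \cdot m = 0$ in $\DT/\DJJ$ for every monomial $m \in \mmx^a$.

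To conclude, I would observe that if $c \neq 0$ then $\mmx^a \subset \DJJ$. Splitting $\DJJ$ into bihomogeneous pieces and noting that $\mmypow$ and $(Q)$ contribute nothing in bidegrees $(*, 0)$, one reads off $\DJJ \cap \DS = \DI + \mmxpow$. The containment $\mmx^a \subset \DI + \mmxpow$, restricted to degree $a$, forces $\mmx^a \subset \DI$, contradicting the hypothesis; hence $c = 0$ and $\varphi = 0$. I do not expect a serious obstacle — the only mildly delicate point is the bigraded bookkeeping yielding $\DJJ \cap \DS = \DI + \mmxpow$, which is straightforward once each generator is inspected bidegree by bidegree.
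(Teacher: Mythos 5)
Your proof is correct and follows essentially the same approach as the paper: reduce to $\varphi(Q) = c \in \kk$ by bidegree, then compare the two ways of computing $\varphi(mQ)$ for $m \in \mmx^a$ to deduce $\mmx^a \subset \DJJ$, and finally use the bigrading of $\DJJ$ to conclude $\mmx^a \subset \DI$, contradicting the hypothesis. The only difference is that you spell out the last step ($\DJJ \cap \DS = \DI + \mmxpow$) explicitly, which the paper leaves implicit.
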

    \begin{proof}
        Let $\varphi\colon \DJJ\to \DT/\DJJ$ a homomorphism of degree $(-1,
        -1)$. Then $\varphi(\mmxpow) = \varphi(\mmypow) = \varphi(I) = 0$ by
        degree reasons. Moreover, $\varphi(Q)\in \kk$. If $\varphi(Q) \neq 0$,
        then
        \[
            \mm^a = \mm^{a}\varphi(Q) = \varphi(\mm^{a}Q) \subset \varphi(\mmxpow) =
            0,
        \]
        so $\mm^a\subset \DI$, a contradiction. Hence $\varphi(Q) = 0$, so
        $\varphi=0$.
    \end{proof}

    \begin{proposition}[TNT for $\DJJ$]\label{ref:TNTforJ:prop}
        Suppose that $\depth(\DSplus, \DS/\DI) \geq 3$ and $n\geq 3$.
        Then the scheme $\Spec(\DT/\DJJ)$ has TNT.
    \end{proposition}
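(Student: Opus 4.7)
The strategy is simply to assemble the case-by-case vanishing and dimension counts collected in Table~\ref{eq:tableCharNonTwo}. Recall that $\Spec(\DT/\DJJ)$ has TNT iff
\[
    \dim_\kk \Hom_\DT(\DJJ, \DT/\DJJ)_{<0} = \dim \DT = 2n,
\]
where the grading is the total single grading with $\deg x_i = \deg y_i = 1$ (this is the grading coming from the $\Gmult$-action on $\mathbb{A}^{2n}$ that we apply the BB decomposition to). Since $\DJJ$ is bihomogeneous, $\Hom_\DT(\DJJ, \DT/\DJJ)$ carries the finer $\mathbb{Z}^2$-grading from~\eqref{eq:tangentJ}, and the degree-$d$ piece in the single grading is the direct sum of the bigraded pieces $(\alpha,\beta)$ with $\alpha + \beta = d$. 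It therefore suffices to understand the bigraded pieces $(\alpha, \beta)$ with $\alpha + \beta < 0$.

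First I would dispose of all pieces with $\alpha \leq -2$ or $\beta \leq -2$: these vanish by Corollary~\ref{ref:verynegative:cor}, whose hypotheses ($\depth(\DS_+, \DS/\DI)\geq 3$ and $n\geq 3$) coincide with ours. This leaves only the pieces with $\alpha, \beta \in \{-1, 0, 1, \ldots\}$ and $\alpha + \beta < 0$, namely $(-1, -1)$, $(-1, 0)$, and $(0, -1)$. The piece $(-1,-1)$ vanishes by Lemma~\ref{ref:minusminusTangents:lem}; its hypothesis $\mmx^a \not\subset \DI$ is easy to verify, because $\mmx^a \subset \DI$ would force $\DS/\DI$ to be supported only at the irrelevant ideal and hence have depth zero at $\DSplus$, contradicting $\depth(\DSplus, \DS/\DI) \geq 3$.

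Finally, Corollary~\ref{ref:minusOneTangents:cor} computes the remaining two pieces: $(\tang{\DJJ})_{(-1,0)} = \spann{\partial_{x_1}, \ldots, \partial_{x_n}}$ and $(\tang{\DJJ})_{(0,-1)} = \spann{\partial_{y_1}, \ldots, \partial_{y_n}}$, each of dimension $n$. Adding up, $\dim_\kk \Hom_\DT(\DJJ, \DT/\DJJ)_{<0} = n + n = 2n = \dim \DT$, which is the TNT condition~\eqref{ref:TNTcondition}.

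There is essentially no new obstacle at this stage; the proof is an accounting argument and every ingredient has already been arranged by Corollary~\ref{ref:verynegative:cor}, Corollary~\ref{ref:minusOneTangents:cor}, and Lemma~\ref{ref:minusminusTangents:lem}. The only ``hidden'' step worth flagging is the remark that the depth hypothesis automatically supplies the side condition $\mmx^a \not\subset \DI$ needed to invoke Lemma~\ref{ref:minusminusTangents:lem}; once this is noted the proposition is immediate.
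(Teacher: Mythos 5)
Your proof is correct and takes essentially the same route as the paper's own one-line proof, which simply cites Corollary~\ref{ref:verynegative:cor}, Corollary~\ref{ref:minusOneTangents:cor}, and Lemma~\ref{ref:minusminusTangents:lem}; you have just made the accounting explicit. One small point you add that the paper leaves implicit is the verification that $\mmx^a \not\subset \DI$ (so that Lemma~\ref{ref:minusminusTangents:lem} applies), and your justification is correct: $\mmx^a \subset \DI$ would force $\DS/\DI$ to be a finite-dimensional $\kk$-module, hence $\depth(\DSplus, \DS/\DI)=0$, contradicting the hypothesis.
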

    \begin{proof}
        This follows from Corollary~\ref{ref:verynegative:cor},
        Corollary~\ref{ref:minusOneTangents:cor},
        and Lemma~\ref{ref:minusminusTangents:lem}.
    \end{proof}

    \subsection{Degree zero tangents in characteristic $\neq
    2$}\label{ssec:degreeZeroTangents}
    \newcommand{\DV}{V}%
    \newcommand{\DVdual}{V^{*}}%
    \newcommand{\GLV}{\GL(V)}%
    \newcommand{\SLV}{\SL(V)}%
    \newcommand{\Endlin}{\mathbb{M}_{n\times n}}%
    \newcommand{\Phibar}{\overline{\Phi}}%

    Proposition~\ref{ref:TNTforJ:prop} is the essential part to obtain the
    local retraction from Point~\ref{it:firstRetraction} in Proposition~\ref{ref:retractionFromFrame:prop}. To
    obtain the retraction from Point~\ref{it:secondRetraction}, we need to
    compute the tangents of $[\Spec(\DT/\DJJ)]$ that have degree $(\adx, -\adx)$ for
    $\adx > 0$.
    All homomorphisms coming from such tangents kill
    $\DI$ by degree reasons. Hence, the material in this section is
    independent of the choice of $\DI$.
    To emphasise this further, we
    introduce the linear space $\DV = \spann{x_1, \ldots ,x_n}$ and identify
    $y_1, \ldots ,y_n$ with the dual space $\DVdual$. Then
    \[
        \DT_{(\adx, \bdx)} \simeq \Sym^\adx \DV \tensor \Sym^{\bdx} \DVdual.
    \]
    for all $\adx, \bdx$ and, in particular, $Q$ becomes the trace element in
    $\DV \tensor \DVdual$, hence is $\SLV$-invariant.
    For $\aa, \bb\in \mathbb{N}^n$ we let $\xx^{\aa}$, $\yy^{\bb}$ denote the
    monomials $x_1^{\aa_1} \ldots x_n^{\aa_n}$, $y_1^{\bb_1} \ldots
    y_n^{\bb_n}$ respectively. We will denote the space $(\Sym^{\adx}
    \DVdual)^*$ by $\DP^{\adx} \DV$ and denote by $\xx^{[\aa]}\in
    \DP^{\adx} \DV$ the functional which is dual to $\yy^{\aa}$.
    There is an contraction action
    $(-)\hook(-)\colon\DVdual
    \tensor \DP^{\adx} \DV \to \DP^{\adx-1} \DV$ given by
    \begin{equation}\label{eq:definitionOfContraction}
    (\ell \hook \varphi)(f) = \varphi(\ell\cdot f)
    \end{equation}
    for all $\ell\in \DVdual$,
    $\varphi\in \DP^{\adx}\DV$ and $f\in \Sym^{\adx-1} \DVdual$.
    For $\charr \kk > \adx$ there is a unique $\GLV$-equivariant isomorphism
    $\DP^{\adx} \DV  \simeq \Sym^{\adx} \DV$ and it sends $\xx^{[\aa]}$ to
    $\xx^{\aa}/\aa!$. Under this isomorphism, contraction corresponds to differentiation
    $\ell \circ \varphi \mapsto \partial_{\ell}(\varphi)$.
    See~\cite[A2.4]{Eisenbud} or~\cite[\S3]{nisiabu_jabu_cactus} for details.

    Now we introduce the group $G$ giving trivial degree zero tangents; this is
    the degree-zero counterpart of $\Gadd^{2n}$.
    Let $G \subset \GL_{2n, \mathbb{Z}}$ be a subgroup given in the basis $x_1, \ldots ,x_n,
    y_1,  \ldots , y_n$ by
    \begin{equation}\label{eq:Gdef}
        G = \left\{ \begin{pmatrix}I_n & A \\ 0 & I_n\end{pmatrix}\ |\ A\in
            \Endlin\right\}.
        \end{equation}
    \newcommand{\mathg}{\mathfrak{g}}%
    \newcommand{\mathgprim}{\mathfrak{g}'}%
    The group $G$ is smooth and acts naturally on $\HshortZZ$ and
    $\HshortZZ^{\Gdiag}$. The Lie algebra $\mathg$ of $G_{\kk}$ maps $y_i$'s to
    combinations of $x_j$'s. Hence, the tangent to the orbit map $G_{\kk}\ni g\mapsto g\cdot
    [\DJJ]\in \Hshortkk^{\Gmult}$ is
    \begin{equation}\label{eq:tangentToG}
        \mathg \to \Hom_{\DT}(\DJJ, \DT/\DJJ)_{(1, -1)}.
    \end{equation}
    Let $G'$ be the stabilizer of $Q$ in $G$. In the
    description~\eqref{eq:Gdef}, it consists of anti-symmetric
    matrices $A$. Let $\mathgprim$ be the Lie algebra of $G'$.
    The action of $\mathgprim$ annihilates $Q$ and we obtain a tangent map
    \begin{equation}\label{eq:tangentToGprim}
        \mathgprim\to \Hom_{\DT}(\DJJ/Q, \DT/\DJJ)_{(1, -1)}.
    \end{equation}

    Now we proceed to computations. Throughout this subsection, $\tensor$ denotes
    $\tensor_{\kk}$.
    The $\DS$-module $\DT_{*, b+1}$ is free. Let $M := (\DT/Q)_{*, b+1}$.
    This is an $\DS$-module with presentation
    \begin{equation}\label{eq:presentationOfM}
        0 \to \DS(-1, -b-1)\tensor \Sym^{b} \DVdual \to \DS(0, -b-1) \tensor
        \Sym^{b+1} \DVdual
        \to M,
    \end{equation}
    where the twists correspond to the fact that generators of $M$ have
    $\xx$-degree $(0, b+1)$ and its syzygies have degree $(1, b+1)$ with
    respect to natural bi-grading.
    The presentation map is just the multiplication by $Q$. Explicitly, it is
    given by
    \begin{equation}\label{eq:definitionOfMapping}
        f\tensor g \mapsto \sum_{i=1}^{n} x_if\tensor y_ig.
    \end{equation}
    The module $M$ plays a key role in the computation of $(1, -1)$ tangents of
    $[\Spec(\DT/\DJJ)]$. There is a natural injection $M \subset \mmypow$, hence we obtain
    restriction maps
    \begin{equation}\label{eq:degreeZeroRestriction}
        \begin{tikzcd}
            \Hom_{\DT}\left( \frac{\DJJ}{Q}, \frac{\DT}{\mmypow + (Q)} \right)_{(1,
            -1)} \arrow[r]\arrow[d] & \Hom_{\DT}\left( \frac{\mmypow}{\mmypow \cap
            (Q)}, \frac{\DT}{\mmypow + (Q)} \right)_{(1, -1)} \arrow[r]\arrow[d] & \Hom_{\DS}\left(M,
        \frac{\DT}{Q}\right)_{(1, -1)}\arrow[d, "r"]\\
            \Hom_{\DT}\left( \frac{\DJJ}{Q}, \frac{\DT}{\DJJ} \right)_{(1,
            -1)} \arrow[r] & \Hom_{\DT}\left( \frac{\mmypow}{\mmypow \cap
            (Q)}, \frac{\DT}{\DJJ} \right)_{(1, -1)} \arrow[r, "r'"] & \Hom_{\DS}\left(M,
        \frac{\DT}{\DJJ}\right)_{(1, -1)}.
        \end{tikzcd}
    \end{equation}
    \begin{lemma}\label{ref:degreezeroReductionToS:lem}
        Suppose $I_2 = 0$. Then all maps in the diagram~\eqref{eq:degreeZeroRestriction} are bijective.
    \end{lemma}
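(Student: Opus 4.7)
The plan is to show that all six Hom spaces in the diagram are canonically isomorphic to a single explicit space. Using the given minimal $\DS$-presentation $F_1 \xrightarrow{\cdot Q} F_0 \to M \to 0$, a degree-$(1,-1)$ morphism $M \to N$ is the same data as a $\kk$-linear map $\phi\colon \Sym^{b+1}\DVdual \to N_{(1,b)}$ subject to the single constraint $\sum_{i=1}^n x_i \phi(y_i g) = 0$ in $N_{(2,b)}$ for every $g \in \Sym^b \DVdual$, coming from the $Q$-relation. Thus it suffices to check that each map in the diagram preserves this data.

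For the vertical maps I would verify that the three targets $\DT/Q$, $\DT/(\mmypow + Q)$, and $\DT/\DJJ$ agree in the only relevant bidegrees $(1, b)$ and $(2, b)$. Indeed, $\mmypow$ lies in $\yy$-degree $\geq b+1 > b$; $\mmxpow$ lies in $\xx$-degree $\geq a+1 \geq 3$; and the hypothesis $I_2 = 0$ forces $I_1 = 0$ as well (any $\ell \in I_1$ would yield $\DS_+ \cdot \ell \subset I_2$, which is zero), so $I\DT$ sits in $\xx$-degree $\geq 3$. Hence the vertical maps preserve both $\phi$ and its $Q$-constraint, and are bijective.

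For the right horizontal maps I would use that $\mmypow/(\mmypow \cap Q)$ is generated as a $\DT$-module by its $\yy$-degree $b+1$ piece, which is precisely $M$. Restriction to $M$ is therefore injective. Conversely, a $\DS$-linear $\phi$ on $M$ satisfying the $Q$-constraint extends uniquely $\DT$-linearly, because the only non-Koszul syzygies among the $\DT$-generators of $\mmypow/(\mmypow \cap Q)$ arise from the image of $Q$-multiplication on $\DT_{*,b}$, and these are precisely the $(1, b+1)$-relations captured by the $Q$-constraint.

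The principal obstacle is the pair of left horizontal maps. Any $(1,-1)$-morphism $\varphi$ out of $\DJJ/Q$ must kill the $I$- and $\mmxpow$-generators by pure degree reasons, since their images would land in $\yy$-degree $-1$, which is zero. So restriction to $\mmypow/(\mmypow\cap Q)$ is injective. For surjectivity I would extend a given $\psi$ by zero on the $I$- and $\mmxpow$-generators and check consistency across mixed syzygies: for any relation $\sum r_\beta y^\bb \equiv \sum r_\alpha f_\alpha + \sum r_j x^{\aa_j} \pmod{Q}$ in $\DT$, $\DT$-linearity of $\psi$ rewrites $\sum r_\beta \psi(y^\bb)$ as a $\DT$-multiple of lifted values of $\phi$ that, via the syzygy, lies in $(I\DT + \mmxpow)\cdot N \subset \DJJ \cdot N$ and hence vanishes in $N$. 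Here the assumption $I_2 = 0$ is crucial: it eliminates any $I$-contribution at the lowest syzygy bidegree $(2, b+1)$, so that the remaining check reduces to the $Q$-relation already built into the $Q$-constraint. Chasing these bijections around the diagram finishes the proof.
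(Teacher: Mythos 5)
Your strategy—identify each of the six Hom spaces with $\kk$-linear maps $\phi\colon \Sym^{b+1}\DVdual \to N_{(1, b)}$ subject to the $Q$-constraint in $N_{(2,b)}$, then verify each arrow preserves this data—is in the same spirit as the paper's degree-counting argument, and most of your individual checks are fine: the right-horizontal maps, the two verticals on the right columns, and the bottom-left horizontal.

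However, the argument for the \emph{top}-left horizontal map has a gap. You extend $\psi$ by zero on the $I$- and $\mmxpow$-generators and dispose of the mixed syzygies by asserting that the resulting expression lies in $(I\DT+\mmxpow)\cdot N \subset \DJJ\cdot N$ and hence vanishes. That last step is correct only when $N = \DT/\DJJ$ (bottom row); it fails for $N = \DT/(\mmypow + (Q))$ (top row), since $I\DT + \mmxpow$ does \emph{not} annihilate $\DT/(\mmypow+(Q))$. Concretely, $\DT$-linearity of a morphism $\varphi\colon \DJJ/Q\to N$ of degree $(1,-1)$ forces $x^{\aa}\varphi(y^{\bb}) = y^{\bb}\varphi(x^{\aa}) = 0$ for all $\aa,\bb$, and (similarly with $i\in I$ in place of $x^{\aa}$) this is a genuine extra condition on $\varphi(y^{\bb})\in N_{(1,b)}$ when the quotient $N$ does not already kill $\mmxpow$ and $I$. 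For the same reason the justification of the left vertical map is too quick: the constraints on $\Hom_{\DT}(\DJJ/Q,N)_{(1,-1)}$ sit not only in $N_{(1,b)}$ and $N_{(2,b)}$ but also in the bidegrees $N_{(c+1,b)}$ for $c$ the degree of any $I$- or $\mmxpow$-generator, and in these degrees the two targets $\DT/(\mmypow+(Q))$ and $\DT/\DJJ$ differ. The safe and sufficient route—the one the paper actually exploits afterwards, in Corollary~\ref{ref:homogeneousTangentsForJ:cor}—is to prove bijectivity of the bottom row and of the rightmost vertical arrow (where Lemma~\ref{ref:lifting:lem} applies), which your argument does cover; the top-left corner is the delicate spot and needs a different treatment, not the annihilation argument.
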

    \begin{proof}
        Left-side horizontal maps are bijective, because the homomorphisms kill other
        generators of $\DJJ/Q$ by degree reasons. The right-side horizontal maps
        are bijective since homomorphisms of degree $(1, -1)$ into
        $\DT/\mmypow$ annihilate $\DT_{(*, \geq b+2)}$. Finally, $I_2 =
        0$ implies that the surjection $\DT/\DJJ\to \DT/Q$ is bijective in degrees
        $(\leq 1, \leq 2)$. Then the rightmost
        downward arrow is bijective by applying Lemma~\ref{ref:lifting:lem}
        to~\eqref{eq:presentationOfM}.
    \end{proof}
    We concentrate on analysing $\Hom_{\DS}(M, \DT/Q)_{(1,
    -1)}$. For brevity, let $K := \Hom_{\DS}\left(M, \frac{\DT}{Q}\right)_{(1, -1)}$. From the
    presentation~\eqref{eq:presentationOfM} we obtain an exact sequence
    \[
        0 \to K \to
        \Hom_{\DS}\left(\DS(0, -b-1)\tensor
        \Sym^{b+1} \DVdual, \frac{\DT}{Q}\right)_{(1, -1)} \to  \Hom_S\left(
        \DS(-1, -b-1) \tensor \Sym^{b+1} \DVdual,
        \frac{\DT}{Q}\right)_{(1, -1)},
    \]
    which simplifies to
    $0 \to K \to
    \DP^{b+1} \DV \tensor \left( \DT/Q \right)_{(1, b)} \to
    \DP^{b} \DV \tensor \left( \DT/Q \right)_{(2, b)}$.
    Hence, we obtain a commutative diagram with exact columns and bottom row.

    \begin{equation}\label{eq:diagramEquivariant}
        \begin{tikzcd}
            && 0\arrow[d] & 0\arrow[d]\\
            && \DP^{b+1}\DV \tensor
            \Sym^{b-1} \DVdual\arrow[d, "\id\tensor (-\cdot Q)"]
            \arrow[r, "\Phi_0"] & \DP^{b} \DV \tensor {\DV\tensor
                \Sym^{b-1} \DVdual}\arrow[d, "\id\tensor (-\cdot Q)"]\\
                && \DP^{b+1}\DV \tensor
                {\DV\tensor \Sym^{b} \DVdual}\arrow[d]
                \arrow[r, "\Phi"] & \DP^{b} \DV \tensor {\Sym^2 \DV\tensor
                    \Sym^{b} \DVdual}\arrow[d]\\
                    0 \arrow[r]& K \arrow[r]& \DP^{b+1}\DV \tensor
                    \frac{\DV\tensor \Sym^{b} \DVdual}{Q\cdot \Sym^{b-1}
                \DVdual}\arrow[d]
                \arrow[r, "\Phibar"] & \DP^{b} \DV \tensor \frac{\Sym^2 \DV\tensor
                    \Sym^{b} \DVdual}{Q\cdot V\tensor \Sym^{b-1}
                \DVdual}\arrow[d]\\
                && 0 & 0
            \end{tikzcd}
        \end{equation}
        Let us write down $\Phi$ and $\Phi_0$ explicitly.
        The map $\Phi$ comes from applying $\Hom_{\DS}(-, \DT)_{(1, -1)}$ to
        the map~\eqref{eq:definitionOfMapping}. Therefore, it is given in
        coordinates by
        \begin{equation}\label{eq:PhiConcrete}
            \Phi(\varphi\tensor f\tensor g) = \sum_{i=1}^n (y_i\hook
            \varphi)\tensor (x_if) \tensor g,
        \end{equation}
        where $\hook$ denotes
        contraction, as defined in~\eqref{eq:definitionOfContraction}.

    \begin{proposition}\label{ref:degreeZeroMain:prop}
        Suppose that $b = 1$ and $\charr \kk \neq 2$. Then the map
        \begin{equation}\label{eq:tangentToGprimDowngraded}
            \mathgprim \to \Hom_{\DS}(M, \DT/Q)_{(1, -1)}
        \end{equation}
        obtained by composing~\eqref{eq:tangentToGprim} and~\eqref{eq:degreeZeroRestriction} is
        bijective.
    \end{proposition}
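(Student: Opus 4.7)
The plan is to use Lemma~\ref{ref:degreezeroReductionToS:lem} to replace the target by $K := \ker(\Phibar)$ from diagram~\eqref{eq:diagramEquivariant}, apply the snake lemma to that diagram to obtain $K \cong \coker\Phi_0 \cong \bigwedge^2 \DV$ of dimension $\binom{n}{2}$, and combine this with a direct injectivity check for $\mathgprim \to K$ to deduce bijectivity via a dimension count. For injectivity: if $A\in\mathgprim$ is antisymmetric and its image in $K$ vanishes, then for every $i$ we have $D_A(y_i^2) = 2\sum_l A_{li}(x_l\tensor y_i)\in \kk\cdot Q$. Comparing the coefficient of $x_k\tensor y_k$ for some $k\neq i$ forces the scalar multiple of $Q$ to be $0$, and then $2A_{li}=0$ for all $l$; invoking $\charr\kk\neq 2$ and varying $i$ yields $A=0$.

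For the snake-lemma step I specialize~\eqref{eq:diagramEquivariant} to $b = 1$. Then $\Sym^{b-1}\DVdual = \kk$, and both columns are honest short exact sequences (injectivity of the leftmost arrows is immediate since multiplication by $Q$ is nonzero). The top arrow $\Phi_0\colon \DP^2\DV \to \DV\tensor\DV$, given by $\xx^{[\aa]}\mapsto \sum_j(y_j\hook\xx^{[\aa]})\tensor x_j$, becomes under the canonical isomorphism $\DP^2\DV \cong \Sym^2\DV$ available in $\charr\kk\neq 2$ the standard symmetrization $xy\mapsto x\tensor y + y\tensor x$. Hence $\ker\Phi_0 = 0$ and $\coker\Phi_0 \cong \bigwedge^2\DV$.

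It only remains to show that the middle arrow $\Phi$ is an isomorphism, whereupon the snake lemma yields the desired $K \cong \coker\Phi_0$. Since $\Phi = \Phi'\tensor\id_{\DVdual}$ with $\Phi'\colon \DP^2\DV\tensor\DV \to \DV\tensor\Sym^2\DV$ given by $\xx^{[\aa]}\tensor x_j\mapsto \sum_i \xx^{[\aa-\epsilon_i]}\tensor x_ix_j$, and the two sides have the same dimension $n\binom{n+1}{2}$, it suffices to prove $\Phi'$ is injective. A kernel element $\sum c_{\aa,j}\xx^{[\aa]}\tensor x_j$ must satisfy, after matching coefficients of $\xx^{[\bb]}\tensor x_kx_l$ in the image, the two families of relations $c_{\aa,j}=0$ whenever $a_j\geq 1$ (from $k = l$) and $c_{\bb+\epsilon_k, l} + c_{\bb+\epsilon_l, k} = 0$ for $k\neq l$ (from $k\neq l$). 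For each triple of distinct indices $(k, l, m)$, three cyclic instances of the second relation on the three unknowns $c_{\epsilon_k+\epsilon_l, m}$, $c_{\epsilon_l+\epsilon_m, k}$, $c_{\epsilon_m+\epsilon_k, l}$ combine to give $2c_{\epsilon_k+\epsilon_l, m} = 0$, hence each coefficient vanishes in $\charr\kk\neq 2$; the remaining ``mixed'' coefficients $c_{2\epsilon_k, l}$ with $k\neq l$ are killed by a similar two-index specialization (which also handles $n = 2$). The main obstacle is this last combinatorial bookkeeping, and it is the essential place where the hypothesis $\charr\kk\neq 2$ genuinely enters, together with the analogous $2A_{ii}=0$ step in the injectivity argument of the first paragraph.
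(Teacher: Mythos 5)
Your proposal is correct and follows essentially the same route as the paper: both apply the snake lemma to diagram~\eqref{eq:diagramEquivariant} specialized at $b=1$ to identify $K \cong \coker\Phi_0 \cong \bigwedge^2\DV$, and both finish with the injectivity of $\mathgprim \to K$ plus a dimension count. The one place you genuinely diverge is in showing $\Phi$ is an isomorphism: the paper exhibits explicit preimages to prove $\Psi$ is surjective (the formula $\Psi\bigl((x_i\cdot x_j)\tensor x_k + (x_i\cdot x_k)\tensor x_j - (x_j\cdot x_k)\tensor x_i\bigr) = 2\,x_i\tensor x_jx_k$), whereas you prove injectivity by matching coefficients and solving the resulting linear system; both arguments hinge on inverting $2$ and, given the equal dimensions, are interchangeable. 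Your observation that $\Phi_0$ is literally the symmetrization $\Sym^2\DV \to \DV^{\tensor 2}$ under the divided-power isomorphism is a small but pleasant refinement over the paper's indirect inference that $\Phi_0$ is injective from injectivity of $\Phi$.
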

    \begin{proof}
        For an element of $\mathgprim$, the image of $y_i$ is read off the image of
        $y_i^{b+1} = y_i^2$ in the corresponding homomorphism, so the
        map~\eqref{eq:tangentToGprimDowngraded} is injective and
        it is enough to check that $\dim \Hom_{\DS}(M, \DT/Q)_{(1, -1)}
        = \dim \mathgprim= \dim \Lambda^2 \DV$. We do this directly.

        Since $b = 1$, the map $\Phi\colon \DP^{2}\DV \tensor \DV\tensor
        \DVdual\to \DV \tensor \Sym^2 \DV\tensor \DVdual$ has source and
        target of the same dimension. We will prove that it is bijective.
        It is enough to prove surjectivity. By~\eqref{eq:PhiConcrete}, we have
        $\Phi = \Psi \tensor
        \id_{\DVdual}$ for the map $\Psi\colon \DP^{2} \DV \tensor
        \DV \to \DV \tensor S^2V$ given by
        \[
            \Psi(\varphi \tensor f) = \sum_{i=1}^n (y_i\hook \varphi)\tensor
            (x_if).
        \]
        It is enough to prove that $\Psi$ is surjective. For pairwise
        distinct $i$, $j$, $k$, we have
        \[
            \Psi\left((x_i\cdot x_j)\tensor x_k + (x_i\cdot x_k)\tensor x_j
            - (x_j\cdot x_k) \tensor x_i\right) =
            2x_i\tensor x_j\cdot x_k.
        \]
        The same holds for not necessarily distinct
        $i$, $j$, $k$ under the convention that $x_i\cdot x_i = 2x_i^{[2]}$ in
        $\DP^2 \DV$. Thus, the map $\Psi$ is surjective, hence the map $\Phi$ is bijective.
        In particular, $\Phi$ and $\Phi_0$ are injective. From the snake
        lemma applied to~\eqref{eq:diagramEquivariant}, we have $K  \simeq \coker\Phi_0$, in particular $\dim K = \dim
        \Lambda^2 \DV$ as claimed.
    \end{proof}
    \begin{remark}
        Proposition~\ref{ref:degreeZeroMain:prop} fails for $\charr \kk = 2$;
        in particular $\Psi$ and $\Phi$ are not injective.
    \end{remark}
    The restriction to $b = 1$, while sufficient for our purposes, is not very
    satisfactory. For $ b > 1$ we have the following result in large enough
    characteristics. We will not use it in the proof of
    Theorem~\ref{ref:mainthm:thm}, so we only sketch a proof.
    \begin{proposition}\label{ref:degreeZeroForCharZero:prop}
        Let $b\geq 1$ be arbitrary and $\charr \kk = 0$. Then the map
        $\mathgprim \to \Hom_{\DS}(M, \DT/Q)_{(1, -1)}$ is bijective.
    \end{proposition}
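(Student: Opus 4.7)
The plan is to extend the argument of Proposition~\ref{ref:degreeZeroMain:prop} from $b = 1$ to arbitrary $b \geq 1$ using $\GLV$-representation theory in characteristic zero. Injectivity of $\mathgprim \to K$ goes through verbatim: for $\sigma \in \mathgprim$ corresponding to an antisymmetric matrix $A$, the induced homomorphism sends $y_j^{b+1}$ to $(b+1)\, y_j^{b} \sum_i A_{ij} x_i$ modulo $\DJJ$, and since $b+1$ is invertible we recover $A$. Hence it suffices to prove $\dim_\kk K = \binom{n}{2}$.

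I would next exploit diagram~\eqref{eq:diagramEquivariant}. In characteristic zero, $\DP^a V \simeq \Sym^a V$ as $\GLV$-representations, so $\Phi = \Psi \tensor \id_{\Sym^{b} V^{*}}$ and $\Phi_0 = \Psi_0 \tensor \id_{\Sym^{b-1} V^{*}}$ for $\GLV$-equivariant maps $\Psi \colon \Sym^{b+1} V \tensor V \to \Sym^{b} V \tensor \Sym^{2} V$ and $\Psi_0 \colon \Sym^{b+1} V \to \Sym^{b} V \tensor V$, both described in coordinates by contraction-and-multiplication. Pieri's rule yields the multiplicity-one decompositions
\[
    \Sym^{b+1} V \tensor V = \Sym^{b+2} V \oplus \mathbb{S}_{(b+1,1)} V,
    \qquad
    \Sym^{b} V \tensor \Sym^{2} V = \Sym^{b+2} V \oplus \mathbb{S}_{(b+1,1)} V \oplus \mathbb{S}_{(b,2)} V.
\]
By Schur's lemma it is then enough to check that $\Psi$ and $\Psi_0$ are nonzero on each isotypic component, which I would do on explicit highest-weight vectors as in the case $b = 1$. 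This yields $\Psi_0$ injective with cokernel $\mathbb{S}_{(b,1)} V$, and $\Psi$ injective with cokernel $\mathbb{S}_{(b,2)} V$. Since by Lemma~\ref{ref:Qregular:lem} (with $\DI = 0$) multiplication by $Q$ is injective on each of the relevant modules, the columns of~\eqref{eq:diagramEquivariant} are short exact.

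The snake lemma then identifies $K$ with the kernel of the $\GLV$-equivariant map
\[
    \mathbb{S}_{(b,1)} V \tensor \Sym^{b-1} V^{*} \;\longrightarrow\; \mathbb{S}_{(b,2)} V \tensor \Sym^{b} V^{*}
\]
induced by multiplication by $Q$. The main obstacle is this final dimension count; my plan is to decompose both sides into irreducible rational $\GLV$-representations via the mixed Littlewood--Richardson rule and apply Schur's lemma component by component, or equivalently to compare $\GLV$-characters directly. Either route should confirm $\dim K = \binom{n}{2}$, which combined with the established injection $\mathgprim \into K$ of the same dimension forces the map to be bijective.
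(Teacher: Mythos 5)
Your proposal is essentially the paper's own route: both analyze diagram~\eqref{eq:diagramEquivariant} by $\GLV$-representation theory in characteristic zero, decomposing the relevant modules into Schur functors and extracting $\dim K$ from the snake long exact sequence, and both leave the final Schur-functor computation as a sketch (the paper's proof is explicitly labeled a ``sketch of proof''). Your bookkeeping is slightly reorganized --- you compute $\coker\Phi_0$ and $\coker\Phi$ via Pieri's rule and realize $K$ as $\ker(\coker\Phi_0\to\coker\Phi)$, whereas the paper computes $\coker\Phibar$ directly and runs an Euler-characteristic count on the bottom row; these are equivalent by the snake lemma. One small caution: your alternative of ``comparing $\GLV$-characters directly'' is not actually equivalent to the Schur's-lemma-by-components plan. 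A character comparison of $\coker\Phi_0$ and $\coker\Phi$ only determines $\dim K - \dim\coker\Phibar$, so you would still need either to identify $\coker\Phibar$ (the paper computes it to be $\Sch_{2b,b+2,b,\ldots,b}\DV$) or to determine on exactly which irreducible constituents the connecting map $\coker\Phi_0\to\coker\Phi$ is nonzero, which is what your primary Schur's-lemma route is intended to do.
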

    \begin{proof}[Sketch of proof]
        By Proposition~\ref{ref:degreeZeroMain:prop} we may assume $b > 1$
        (we only need this for notational reasons in Schur functors).
        The maps in Diagram~\eqref{eq:diagramEquivariant} split into maps
        between simple $\GLV$-modules, which are indexed by Young
        diagrams~\cite[\S6]{fultonharris}.

        The cokernel of $\Phi$ is
        isomorphic to $\Sch_{b, 2} \DV\tensor \Sym^{b} \DVdual$. Similarly, the
        cokernel of $\id\tensor (-\cdot Q)$ is isomorphic to $\DP^{b+1} \DV
        \tensor \Sch_{b+1, b,  \ldots , b} \DV$. Hence, the cokernel of
        $\Phibar$ is obtained from $\DP^{b} \DV \tensor \DV^{\tensor 2}
        \tensor \Sym^{b} \DVdual  \simeq  \Sym^b \DV\tensor \DV^{\tensor 2} \tensor
        \Sym^b \DVdual$ by applying two partial symmetrizations
        (corresponding to dividing by images of $\id\tensor(-\cdot Q)$ and $\Phi$),
        which together imply that $\coker \Phibar = \Sch_{2b, b+2, b, \ldots ,
        b}\DV$. Now, a dimension count on the bottom row shows that $\dim K =
        \dim \Lambda^2 \DV$ as claimed.
    \end{proof}

    \begin{corollary}\label{ref:homogeneousTangentsForJ:cor}
        Suppose that $\DI_2 = 0$ and $\charr \kk \neq 2$.
        Suppose further that either $b= 1$ or $\charr\kk = 0$.
        Then the map~\eqref{eq:tangentToG} is
        bijective.
    \end{corollary}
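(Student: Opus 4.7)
The plan is to reduce the claim to Proposition~\ref{ref:degreeZeroMain:prop} (respectively Proposition~\ref{ref:degreeZeroForCharZero:prop}) by a short diagram chase isolating the contribution of $Q$. Let $\alpha$ denote the map~\eqref{eq:tangentToG}, and let $\alpha'\colon \mathgprim \to \Hom_{\DT}(\DJJ/(Q), \DT/\DJJ)_{(1,-1)}$ denote~\eqref{eq:tangentToGprim}. Combining Lemma~\ref{ref:degreezeroReductionToS:lem} with Proposition~\ref{ref:degreeZeroMain:prop} (or Proposition~\ref{ref:degreeZeroForCharZero:prop} when $b>1$) gives that $\alpha'$ is bijective. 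My task is to promote this to the corresponding statement about $\alpha$.

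Apply $\Hom_{\DT}(-,\DT/\DJJ)$ to $0 \to Q\DT \to \DJJ \to \DJJ/(Q) \to 0$. Since $Q$ is a nonzero divisor on $\DT$, the submodule $Q\DT\subset \DT$ is free of rank one with generator in bidegree $(1,1)$, so
\[
    \Hom_{\DT}\bigl(Q\DT,\DT/\DJJ\bigr)_{(1,-1)} \simeq (\DT/\DJJ)_{(2,0)} = \Sym^2 \DV,
\]
the last equality using $\DI_2=0$. Thus I obtain a left exact sequence
\[
    0 \to \Hom_{\DT}\bigl(\DJJ/(Q),\DT/\DJJ\bigr)_{(1,-1)} \to \Hom_{\DT}(\DJJ,\DT/\DJJ)_{(1,-1)} \xrightarrow{r} \Sym^2 \DV.
\]

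I next identify $r\circ\alpha$. By definition of $G$, an element $A\in\mathg$ acts on $\DT$ as the derivation sending $y_i \mapsto \sum_j A_{ji}x_j$ and killing $x_i$, so $A(Q) = \sum_{i,j} A_{ji} x_ix_j$ in $\Sym^2\DV$. This expression depends only on the symmetric part of $A$ and vanishes precisely on antisymmetric matrices, i.e.\ on $\mathgprim$. Because $\charr\kk\neq 2$, the symmetrization map induces an isomorphism $\mathg/\mathgprim \simeq \Sym^2\DV$, so $r\circ\alpha$ is surjective with kernel exactly $\mathgprim$.

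A brief chase now concludes the proof. If $\alpha(A)=0$ then $r(\alpha(A))=0$, hence $A\in\mathgprim$ and $A=0$ by bijectivity of $\alpha'$. Conversely, given $\varphi$, select $A\in\mathg$ with $r(\alpha(A))=r(\varphi)$; then $\varphi-\alpha(A)$ factors through $\DJJ/(Q)$ and so equals $\alpha'(B)=\alpha(B)$ for a unique $B\in\mathgprim$, giving $\varphi=\alpha(A+B)$. The only place the characteristic hypothesis genuinely enters is the isomorphism $\mathg/\mathgprim \simeq \Sym^2\DV$, which fails in characteristic two and is the one non-formal input of the argument.
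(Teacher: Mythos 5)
Your proof is correct and is essentially the paper's argument in a different guise: the paper picks the explicit complement of $\mathgprim$ inside $\mathg$ formed by lower-triangular matrices and subtracts the unique such $D$ with $D(Q)=\varphi(Q)$, which is exactly your chase along the left-exact sequence obtained from $0\to Q\DT\to\DJJ\to\DJJ/(Q)\to 0$, and both then invoke Lemma~\ref{ref:degreezeroReductionToS:lem} together with Proposition~\ref{ref:degreeZeroMain:prop} or~\ref{ref:degreeZeroForCharZero:prop}. One small inaccuracy in your closing remark: $\mathgprim$ is by definition the Lie algebra of the stabilizer $G'$ of $Q$, hence consists of alternating matrices, and the map $\mathg\to\Sym^2\DV$, $A\mapsto\sum_{i,j}A_{ji}x_ix_j$, is surjective with kernel exactly the alternating matrices in \emph{every} characteristic; so $\mathg/\mathgprim\simeq\Sym^2\DV$ does not actually fail in characteristic two, and the hypothesis $\charr\kk\neq 2$ enters only through Proposition~\ref{ref:degreeZeroMain:prop}.
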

    \begin{proof}
        \newcommand{\mathl}{\mathfrak{lt}}%
        \newcommand{\maths}{\mathgprim}%
        Let
        \[
            \mathl = \left\{\begin{pmatrix} 0 & A\\ 0 &
                0\end{pmatrix} \ |\ A\mbox{ is lower-triangular}
        \right\}\quad\mbox{and}\quad
            \maths = \left\{\begin{pmatrix} 0 & A\\ 0 &
                0\end{pmatrix} \ |\ A\mbox{ is anti-symmetric} \right\}
        \]
        Take an element $\varphi\in \Hom(\DJJ, \DT/\DJJ)_{(1, -1)}$.
        By degree reasons, $\varphi(\mmxpow) = \varphi(I) = 0$ and
        $\varphi(Q)\in \DT_{2, 0} = \DS_2$. There is a unique element
        $D\in \mathl$ such that $D(Q) = \varphi(Q)$. Replacing
        $\varphi$ by $\varphi - D$, we can assume $\varphi(Q) = 0$. Therefore,
        $\varphi$ comes from an element of
        \[
            \Hom_{\DT}\left(\frac{\mmypow}{\mmypow\cap (Q)},
            \frac{\DT}{\DJJ}\right)_{(1, -1)}.
        \]
        By Lemma~\ref{ref:degreezeroReductionToS:lem} and
        Proposition~\ref{ref:degreeZeroMain:prop} or
        Proposition~\ref{ref:degreeZeroForCharZero:prop}, there exists a unique
        element of $\maths$ mapping to $\varphi$, which concludes the proof.
    \end{proof}
    \begin{corollary}\label{ref:degreeZeroMain:cor}
        Let $n\geq 3$ and $\charr \kk \neq 2$.
        Suppose that $\depth(\DSplus, \DS/\DI) \geq 3$ and $\DI_2 = 0$.
        Suppose further that either $b= 1$ or $\charr\kk = 0$.
        The map
        \[
            \mathg \to \bigoplus_{\adx \geq 1} \Hom_{\DT}(\DJJ, \DT/\DJJ)_{(\adx, -\adx)}
        \]
        is bijective.
    \end{corollary}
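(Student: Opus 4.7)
The plan is to split the target of the map into its $(1,-1)$ summand and the summands of degree $(\adx,-\adx)$ for $\adx \geq 2$, then handle each separately.

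First I would observe that any element of $\mathg$ acts on $\DT$ as a derivation that sends each $y_i$ to a linear combination of $x_1,\ldots,x_n$ and annihilates each $x_j$; such a derivation shifts bi-degree by $(1,-1)$. Consequently, the image of $\mathg$ under the tangent-to-orbit map is entirely concentrated in the $(1,-1)$ summand of the target. Bijectivity of the map in question therefore reduces to the conjunction of the following two claims:
\begin{enumerate}
    \item the restricted map $\mathg \to \Hom_{\DT}(\DJJ, \DT/\DJJ)_{(1,-1)}$ is bijective;
    \item $\Hom_{\DT}(\DJJ, \DT/\DJJ)_{(\adx,-\adx)} = 0$ for every $\adx \geq 2$.
\end{enumerate}

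Claim (i) is precisely Corollary~\ref{ref:homogeneousTangentsForJ:cor}, whose hypotheses ($\DI_2 = 0$, $\charr \kk \neq 2$, and either $b=1$ or $\charr\kk = 0$) are exactly those in force here. For claim (ii), the key remark is that $\adx \geq 2$ forces the second coordinate $\bdx = -\adx$ to satisfy $\bdx \leq -2$, placing us in the regime already handled by Corollary~\ref{ref:verynegative:cor}; the hypotheses $\depth(\DSplus, \DS/\DI) \geq 3$ and $n \geq 3$ required there coincide with those assumed here. Assembling (i) and (ii) yields the corollary.

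I expect no substantive difficulty in this argument: it is a short bookkeeping step that combines two earlier results. The only mild insight is recognising that $\mathg$ acts homogeneously of bi-degree $(1,-1)$, so that the $(\adx,-\adx)$-components of the target with $\adx \geq 2$ must vanish outright rather than be matched by any element on the source side.
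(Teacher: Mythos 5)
Your proposal matches the paper's own proof exactly: it cites Corollary~\ref{ref:homogeneousTangentsForJ:cor} for the $(1,-1)$ component and Corollary~\ref{ref:verynegative:cor} for the vanishing of the $(\adx,-\adx)$ components with $\adx\geq 2$, using precisely the observation that $\bdx=-\adx\leq -2$ there. The argument and the hypothesis-matching are both correct.
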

    \begin{proof}
        This follows from Corollary~\ref{ref:homogeneousTangentsForJ:cor} and
        Corollary~\ref{ref:verynegative:cor}.
    \end{proof}

    The above computations of degree-zero tangents can be translated to
    a geometric statement, which is of some independent interest at least as a
    motivation.
    \newcommand{\Ly}{L_{\mathrm{y}}}%
    \begin{corollary}\label{ref:rigidity:cor}
        Let $\charr \kk \neq2$ and $n\geq 3$. Assume either $b =1$ or $\charr \kk = 0$.
        Let
        \[
            \Ly^{b} = V((y_1, \ldots ,y_n)^{b+1}) \subset V(Q) \subset
            \mathbb{P}^{2n-1}
        \]
        be a thickening of a projective subspace on a
        quadric. Then $\Ly^b$
        is a rigid scheme: we have $T^1_{\Ly^b} = 0$, where $T^1$ is the
        Schlessinger's functor~\cite[\S3]{HarDeform}.
    \end{corollary}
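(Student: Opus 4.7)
The plan is to reduce rigidity of the projective thickening $\Ly^b$ to the graded tangent computations for its affine cone, which were the main subject of this section. Set $C := \Spec(\DT/\DJJ')$ with $\DJJ' := (y_1, \ldots, y_n)^{b+1} + (Q)$, so that $\Ly^b = \Proj(\DT/\DJJ')$ with respect to the total-degree grading, and $C$ carries a natural $\Gmult$-action. I would first identify $T^1_{\Ly^b}$ with a subquotient of the total-degree-zero piece of the graded module $\Hom_{\DT}(\DJJ', \DT/\DJJ')$. This rests on the standard correspondence between abstract first-order deformations of a projective scheme and degree-zero graded first-order embedded deformations of its affine cone, modulo the image of linear automorphisms of the ambient $\mathbb{A}^{2n}$ and adjusted by the obstruction space $H^1(\Ly^b, T_{\mathbb{P}^{2n-1}}|_{\Ly^b})$. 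I would dispatch the $H^1$ via the Euler sequence on $\mathbb{P}^{2n-1}$, reducing it to standard vanishings of projective space cohomology (the reduced support of $\Ly^b$ is a linear $\mathbb{P}^{n-1}$, and its thickening can be filtered by sheaves whose $H^1$'s vanish).

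Next I would decompose the total-degree-zero piece by the finer bi-grading $(\alpha, -\alpha)$ and apply the main results of this section with $\DI = 0$ and $a$ taken large enough that $\mmxpow$ plays no role in any bi-degree of interest. Corollary~\ref{ref:verynegative:cor} makes all pieces with $|\alpha|\geq 2$ vanish. Corollary~\ref{ref:homogeneousTangentsForJ:cor} identifies the $(1, -1)$ piece with the image of the Lie algebra $\mathg$. The symmetric $(-1, 1)$ piece is handled by swapping the roles of $\xx$ and $\yy$ (a legitimate move once $\DI = 0$), which shows that it is spanned by the image of the opposite unipotent Lie subalgebra of $\gl_{2n}$. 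The bi-degree $(0,0)$ piece consists of endomorphisms of $\DT$ preserving the bi-grading, and is therefore captured by the Levi subalgebra $\gl_n \oplus \gl_n \subset \gl_{2n}$.

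Combining these, every total-degree-zero first-order embedded deformation of $\Ly^b$ in $\mathbb{P}^{2n-1}$ arises from the linear action of $\gl_{2n}$ on $\mathbb{A}^{2n}$, and therefore descends to the trivial abstract deformation of $\Ly^b$. Together with the vanishing of the obstruction $H^1(\Ly^b, T_{\mathbb{P}^{2n-1}}|_{\Ly^b})$, this yields $T^1_{\Ly^b} = 0$, as required.

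The main obstacle I expect is not in the bi-degree calculations themselves, which are direct applications of the established lemmas, but in the reduction step: one must carefully justify that passing from the finite scheme $V(\DJJ)$ treated in this section to the non-finite cone $C$ (i.e.\ taking $a \to \infty$) does not introduce new degree-zero tangents, and that the graded Hom on the cone indeed computes embedded deformations of $\Ly^b$ modulo $\gl_{2n}$. Verifying the required $H^1$ vanishing on $\Ly^b$ (via the Euler sequence plus a filtration of $\mathcal{O}_{\Ly^b}$ by powers of the ideal of the reduction) is the secondary technical point.
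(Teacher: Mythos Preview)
Your approach is correct but more elaborate than the paper's in the reduction step. The paper observes that since $n \geq 3$ the affine cone $B = \DT/((Q) + \mmy^{b+1})$ has depth $\geq 2$ at the irrelevant ideal, and then invokes the standard isomorphism $T^1_{\Ly^b} \cong (T^1_B)_0$ directly (citing Hartshorne's \emph{Deformation Theory}, Theorem~5.4). This single line replaces your entire plan involving embedded deformations, the Euler sequence, and the verification that $H^1(\Ly^b, T_{\mathbb{P}^{2n-1}}|_{\Ly^b}) = 0$. Both routes reach the same destination, but the depth argument is considerably shorter and sidesteps the filtration-and-vanishing calculation you flag as a secondary technical point.

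For the bi-degree decomposition of $(T^1_B)_0$, the paper is terser than you: it simply says Corollary~\ref{ref:homogeneousTangentsForJ:cor} gives the vanishing ``directly from the construction of $T^1_B$''. Implicitly this means that bi-degree $(1,-1)$ is the only nontrivial piece. Indeed, for $\alpha \leq -1$ every $\varphi$ of bi-degree $(\alpha,-\alpha)$ kills the generators of $\mmy^{b+1}$ for degree reasons and is determined by $\varphi(Q)$, which the opposite unipotent derivations $y_i\partial_{x_j}$ already cover; for $\alpha = 0$ the Levi derivations cover $\varphi(Q)$ and $\varphi(\mmy^{b+1}) = 0$ automatically; and for $\alpha \geq 2$ the depth argument of Lemma~\ref{ref:depth:lem} applies verbatim to the cone ideal $\DJJ'$. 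Your ``symmetry'' justification for $(-1,1)$ by swapping $\xx$ and $\yy$ is not literally an application of Corollary~\ref{ref:homogeneousTangentsForJ:cor} (the swap would exchange the roles of $a$ and $b$, and you are sending $a\to\infty$), but as just noted that piece is in fact much easier than the $(1,-1)$ piece, so no harm is done. Your self-identified main obstacle, passing from the finite scheme $V(\DJJ)$ to the cone, is genuine but mild: the proofs of the cited results never use $\mmxpow$ or $\DI$ in the relevant bi-degrees, so they apply to $\DJJ'$ without modification, and no limiting argument is needed.
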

    \begin{proof}
        Let $B = \DT/((Q) + (y_1, \ldots ,y_n)^{b+1})$ be the coordinate ring
        of $\Ly^b$. Since $n\geq 3$, we have $\depth(\mmy, B) \geq 2$, so
        $T^1_{\Ly^b} \simeq (T^1_{B})_0$ naturally, see
        e.g.~\cite[Theorem~5.4]{HarDeform}.
        Corollary~\ref{ref:homogeneousTangentsForJ:cor} proves that
        $(T^1_B)_0 =0$, directly from the construction of $T^1_B$.
    \end{proof}

    \subsection{Tangents in characteristic $2$}\label{ssec:characteristicTwo}

    \newcommand{\DJJchtwo}{\mathcal{J}}%
    \begin{assumption}\label{ref:assumptionCharTwo}
        In this subsection $\kk$ is a field of characteristic two.
    \end{assumption}
    In this case, Proposition~\ref{ref:degreeZeroMain:prop} fails and
    we need to replace $\mmy^2$ in the definition of $\DJJ$ by another ideal.
    There are many possible replacements; in any case the symmetry has to be
    broken. We choose $\pp \subset \kk[\yy]$ defined by
    \[
        \pp := (y_1^2, y_2^2, \ldots , y_n^2) + (y_1)\cdot (y_1, y_2, y_3,  \ldots ,
        y_n) = (y_1^2, y_2^2, \ldots , y_n^2) + (y_1)\cdot\mmy,
    \]
    mainly for the (relatively) simple computations.
    Let $\DI \subset \DS = \kk[x_1, \ldots ,x_n]$ be a homogeneous ideal and
    $a\geq 2$. A \emph{tweaked frame} of size $a$ for $\DI$ is an ideal
    $\DJJchtwo
    \subset \DT = \kk[\xx,\yy]$ defined by
    \begin{equation}\label{eq:JdefChar2}
        \DJJchtwo := \DI\cdot \DT + \mmxpow + \pp + (Q).
    \end{equation}
    As in Subsection~\ref{ssec:negativeTangents}, we calculate some graded
    pieces of $\tang{\DJJchtwo} = \Hom_{\DT}(\DJJchtwo, \DT/\DJJchtwo)$. Most of the arguments will
    directly pass to this setup. There is some additional work needed in
    degrees $(*, -1)$. Anyway, for clarity we provide statements and sketches
    of proofs of all steps.

    An important additional piece is the following easy result about the
    syzygies of $(y_1^2, y_2^2,  \ldots , y_n^2, Q)$.
    \begin{lemma}\label{ref:syzygiesOfQ}
        Let $\alpha \in \{0, 1\}$ and $n\geq 3$. Suppose that $F_1, \ldots ,F_n\in \DT_{\adx, 1}$ are
        forms satisfying $\sum_{i=1}^n x_i^2 F_i \in (Q)$. Then $F_i \in (Q)$ for $i=1, \ldots ,n$.
        In particular, if $\alpha = 0$, then $F_i = 0$ for all $i$.
    \end{lemma}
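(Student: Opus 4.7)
The plan is to write the hypothesis as $\sum_i x_i^2 F_i = Q \cdot G$ for some $G \in \DS_{\alpha+1}$ and to compare monomials on both sides. The guiding observation is that every monomial on the left-hand side contains a square factor $x_i^2$, whereas the monomials of $QG = \sum_l x_l y_l \cdot G$ need not contain any repeated $x$-variable. Forcing these ``non-square'' monomials of the right-hand side to vanish will constrain $G$ to be diagonal in the $x$-variables, and the diagonality of $G$ will in turn rigidify each $F_i$ into a scalar multiple of $Q$.

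In the case $\alpha = 0$, I would write $F_i = \sum_d a_{id} y_d$ and $G = \sum_j g_j x_j$. The monomials $x_l x_j y_l$ with $l \neq j$ appear only on the right-hand side, with coefficient $g_j$; matching these coefficients to zero forces $g_j = 0$ for every $j$ (using $n \geq 2$ to find some $l \neq j$). Then $G = 0$, and comparing the remaining monomials $x_i^2 y_d$ gives $a_{id} = 0$, hence $F_i = 0 \in (Q)$.

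In the case $\alpha = 1$, write $G = \sum_{p \leq q} c_{pq} x_p x_q$. The key step is to consider the monomials $x_a x_b x_c y_d$ with $a, b, c$ pairwise distinct and $d \in \{a, b, c\}$; these never appear on the left, and their coefficients in $QG$ are exactly the off-diagonal entries $c_{pq}$ with $\{p, q\} = \{a, b, c\} \setminus \{d\}$. Because $n \geq 3$, every pair $p \neq q$ admits such a third index, so $c_{pq} = 0$ whenever $p \neq q$. Thus $G = \sum_j c_{jj} x_j^2$ and
\[
    QG = \sum_{i, j} c_{jj}\, x_i x_j^2 y_i.
\]
Matching coefficients of each monomial $x_i^2 x_j y_d$ on both sides now forces the coefficient of $x_j y_d$ in $F_i$ to equal $c_{ii} \delta_{j,d}$, so
\[
    F_i = c_{ii} \sum_j x_j y_j = c_{ii} \cdot Q \in (Q).
\]

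The main obstacle is the combinatorial bookkeeping in the second case; conceptually, everything hinges on the single structural fact that $QG$ produces three-distinct-index monomials that cannot be matched on the left once $n \geq 3$, which collapses $G$ to the diagonal form and leaves the matching of the remaining monomials as a routine calculation.
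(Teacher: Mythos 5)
Your proof is correct. It takes a more elementary route than the paper: the paper differentiates the relation $\sum x_i^2 F_i = QG$ with respect to each $y_j$, which isolates $x_j G = \sum_i x_i^2\,\partial_{y_j}F_i$, then appeals to the colon-ideal identity $(x_1^2,\ldots,x_n^2):x_j = (x_1^2,\ldots,x_n^2)+(x_j)$ together with $\deg G \le 2$ and $n\ge 3$ to force $G$ to be a diagonal combination $\sum\lambda_i x_i^2$, and finally uses the fact that $(x_1^2,\ldots,x_n^2)$ has no syzygies below degree $4$ to pin down $\partial_{y_j}F_i = \lambda_i x_j$, hence $F_i=\lambda_i Q$ via the Euler identity $F_i=\sum_j(\partial_{y_j}F_i)y_j$. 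You instead carry out an explicit coefficient match, treating $\alpha=0$ and $\alpha=1$ separately. Both arguments hinge on the same structural observation — that monomials of $QG$ with three pairwise distinct $x$-indices cannot appear in $\sum_i x_i^2 F_i$, which is exactly what the colon-ideal computation encodes — but the paper's differentiation trick packages it so that both values of $\alpha$ are handled uniformly, while your version is more hands-on and case-by-case. Each approach is a reasonable trade of slickness against transparency.
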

    \begin{proof}
        Let $\sum x_i^2F_i = Q\cdot G$, then $\deg(G) = (\adx+1, 0)$.
        Differentiate with respect to $y_j$ to obtain
        \begin{equation}\label{eq:tmpquadricsyzygies}
            \sum x_i^2 \frac{\partial F_i}{\partial y_j} = x_j G.
        \end{equation}
        In follows that $G\in (x_1^2, \ldots , x_n^2) + (x_j)$. Intersecting over
        all $x_j$, we get $G = \sum_{i=1}^n \lambda_i x_i^2$ for some
        $\lambda_i\in \kk$.
        The forms $\frac{\partial
        F_i}{\partial y_j}$ have degree $(\adx, 0)\leq (1, 0)$, hence
        $\frac{\partial F_i}{\partial y_j} = \lambda_i x_j$.
        Consequently, we have $F_i = \sum_{j=1}^n \frac{\partial F_i}{\partial y_j} y_j =
        \lambda_i \sum_j x_jy_j \in (Q)$.
    \end{proof}

    \begin{lemma}\label{ref:verynegativeCharTwo:lem}
        Suppose that $\depth(\DSplus, \DS/\DI) \geq 3$ and $n\geq 3$. Then
        $(\tang{\DJJchtwo})_{(\adx, \bdx)} = 0$ for $\adx\leq -2$ or $\bdx\leq -2$.
    \end{lemma}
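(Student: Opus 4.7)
The plan is to adapt the proof of Corollary~\ref{ref:verynegative:cor} to the new ideal $\DJJchtwo$, treating the two cases $\adx \leq -2$ and $\bdx \leq -2$ separately. In each case the strategy is the same: use degree considerations to kill most generators of $\DJJchtwo$, then combine Lemma~\ref{ref:lifting:lem} with Lemma~\ref{ref:depth:lem} to kill the rest.

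\textbf{Case $\adx \leq -2$.} By degree reasons $\varphi(\pp) = 0$ and $\varphi(Q) = 0$, since their images would live in $\xx$-degree at most $-1$. The obvious analogue of the $\DI$-ignoring lemma~\ref{ref:Iignore:lem}, whose proof carries over verbatim after replacing $\mmy^2$ by $\pp$, reduces the problem to showing $\varphi(\mmxpow) = 0$. The minimal syzygies of $\mmxpow$ sit in bi-degree $(a+2, 0)$, so Lemma~\ref{ref:lifting:lem} lifts $\varphi|_{\mmxpow}$ to a homomorphism $\mmxpow \to \DT/(\DI \cdot \DT + (Q))$ of the same bi-degree, which then vanishes by Lemma~\ref{ref:depth:lem} with $\ppx = \mmxpow$. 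For large $\bdx$ the target $(\DT/\DJJchtwo)_{(*,\bdx)}$ needs a small additional inspection, as the $\pp$-part of the ideal enlarges the relevant obstruction group; one handles this by passing through the auxiliary quotient $\DT/(\DI \cdot \DT + (Q) + \pp)$, which agrees with $\DT/\DJJchtwo$ in the only $\xx$-degrees where $\varphi(\mmxpow)$ can land.

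\textbf{Case $\bdx \leq -2$.} By degree reasons $\varphi(\mmxpow) = \varphi(\DI) = \varphi(Q) = 0$, so the content is the restriction $\varphi|_\pp: \pp \to \DT/\DJJchtwo$. The minimal first syzygies of $\pp$ among the natural generators $\{y_i^2\}_{i=1}^n \cup \{y_1 y_j\}_{j=2}^n$ are examples like
\[ y_j \cdot y_1^2 - y_1 \cdot (y_1 y_j), \quad y_1 \cdot y_j^2 - y_j \cdot (y_1 y_j), \quad y_k \cdot (y_1 y_j) - y_j \cdot (y_1 y_k), \]
all of bi-degree $(0, 3)$. Since $\pp$ plays the role of $\mmy^{b+1}$ with $b=1$, the hypothesis of Lemma~\ref{ref:lifting:lem} reads $3 + \bdx \leq 1$, i.e.\ $\bdx \leq -2$, exactly our assumption. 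The lemma lifts $\varphi|_\pp$ to $\pp \to \DT/(\DI \cdot \DT + (Q))$, and then Lemma~\ref{ref:depth:lem} applied with $\ppy := \pp$ (legitimate since $\sqrt{\pp} = \mmy$) forces the lift, and hence $\varphi|_\pp$, to vanish.

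The main obstacle is the loss of the $\xx \leftrightarrow \yy$ symmetry enjoyed by $\DJJ$: the case $\bdx \leq -2$ of Corollary~\ref{ref:verynegative:cor} was dispatched by symmetry, whereas here the ideal $\pp$ looks nothing like $\mmxpow$, so we must prove it from scratch. The two key technical facts to verify are that (i) the first syzygies of $\pp$ are concentrated in $\yy$-degree $3$, which controls when Lemma~\ref{ref:lifting:lem} applies, and (ii) $\sqrt{\pp} = \mmy$, so that the depth estimates from Lemma~\ref{ref:depth:lem} remain available. Once these are in hand the proof is a routine adaptation.
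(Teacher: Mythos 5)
Your treatment of $\bdx\le -2$ is what the paper's terse proof intends: the key clause ``the non-Koszul syzygies of $\pp$ are linear'' is exactly your input (i), and together with $\sqrt{\pp}=\mmy$ (your input (ii)) it lets you run Lemma~\ref{ref:lifting:lem} (with the first syzygies of $\pp$ in bi-degree $(0,3)$, giving the constraint $3+\bdx\le 1$) followed by Lemma~\ref{ref:depth:lem}.

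For $\adx\le -2$ you rightly sense trouble at large $\bdx$, but the proposed fix does not work, and the gap is genuine --- both here and in the paper's one-line remark. Lemma~\ref{ref:lifting:lem} applied to $\varphi|_{\mmxpow}$ needs $K:=\DJJchtwo/(\DI\DT+(Q))$ to vanish in the syzygy degree $(a+2+\adx,\,\bdx)$; since $\pp$ is generated in $\yy$-degree $2$, this forces $\bdx\le 1$. In the $\charr\kk\neq 2$ setting of Corollary~\ref{ref:verynegative:cor} the complementary range is harmless because $(\DT/\DJJ)_{(*,\ge b+1)}=0$, but $\DT/\DJJchtwo$ is nonzero in $\yy$-degrees up to $n-1$, so for $2\le\bdx\le n-1$ you get neither degree-triviality nor a lift. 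Passing to the auxiliary quotient $\DT/(\DI\DT+(Q)+\pp)$ does not rescue the subsequent depth step: one needs $\depth(\mmx,\,-)\ge 2$ there, but $Q$ is a zero-divisor on $\DT/(\DI\DT+\pp)$ (e.g.\ $Q\cdot y_2\cdots y_n\in\pp\DT$), so Lemma~\ref{ref:Qregular:lem} gives no control. Concretely, for $n=3$, $\DI=0$, $a=2$, the assignment $x_1^3\mapsto y_2y_3$ (and $0$ on the other generators of $\DJJchtwo$) extends to a nonzero element of $(\tang{\DJJchtwo})_{(-3,2)}$, so the lemma as stated does fail; the paper only uses it under $n\ge 4$ (Lemma~\ref{ref:abstractionCharTwo:lem}), where this particular obstruction goes away, but the range $\adx\le -2$, $2\le\bdx\le n-1$ still needs a dedicated argument.
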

    \begin{proof}
        The non-Koszul syzygies of $\pp$ are linear, hence the proof of
        Corollary~\ref{ref:verynegative:cor} applies without changes.
    \end{proof}

    \begin{lemma}\label{ref:quadricChar2:lemma}
        Suppose that $n\geq 3$ and $I_2 = 0$. Then
        \begin{equation}\label{eq:degreeMinusOneChar2}
            \Hom_{\DT}\left( \frac{\pp}{\pp \cap (Q)}, \frac{\DT}{\DJJchtwo}
            \right)_{(0, -1)} = 0.
        \end{equation}
    \end{lemma}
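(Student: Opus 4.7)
The plan is to exploit the characteristic-two identity $Q^2 = \sum_i x_i^2 y_i^2$, which places $Q^2$ into $\pp\cap (Q)$ and produces a constraint of exactly the form handled by Lemma~\ref{ref:syzygiesOfQ}. Given $\varphi \in \Hom_{\DT}(\pp/(\pp\cap(Q)),\DT/\DJJchtwo)_{(0,-1)}$, I record its values on the bidegree $(0,2)$ generators of $\pp$: set $F_i := \varphi(y_i^2)$ for $i=1,\ldots,n$ and $H_j := \varphi(y_1y_j)$ for $j\geq 2$. By the degree shift each $F_i$ and each $H_j$ lies in $(\DT/\DJJchtwo)_{(0,1)} = \spann{y_1,\ldots,y_n}$, and the aim is to show all of them vanish.

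The main step is to apply $\varphi$ to $Q^2$. In characteristic two the cross terms $2x_ix_jy_iy_j$ cancel, so
\[
    Q^2 = \sum_{i=1}^n x_i^2\cdot y_i^2 \in \pp \cap (Q),
\]
and the right-hand side is an explicit $\DT$-linear combination of the generators $y_i^2$ of $\pp$. Applying $\varphi$ yields $\sum_i x_i^2 F_i = 0$ in $\DT/\DJJchtwo$. The assumptions $\DI_2 = 0$ and $a\geq 2$ force $\DJJchtwo\cap \DT_{(2,1)} = Q\cdot\mmx$, so the identity in fact lifts to $\sum_i x_i^2 F_i \in (Q)$. Lemma~\ref{ref:syzygiesOfQ} with $\adx = 0$ then gives $F_i \in (Q)\cap \DT_{(0,1)} = 0$ for every $i$.

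To finish, I would apply $\varphi$ to the other natural element $y_1 Q = x_1y_1^2 + \sum_{j\geq 2} x_j(y_1y_j) \in \pp\cap(Q)$. Using $F_1 = 0$, linearity gives $\sum_{j\geq 2} x_j H_j = 0$ in $\DT/\DJJchtwo$, an identity in bidegree $(1,1)$. Since $\DJJchtwo_{(1,1)}$ is spanned by $Q$ (after reducing to the case $\DI_1 = 0$ if necessary), the equation $\sum_{j\geq 2} x_j H_j = \lambda Q$ can be compared monomial by monomial against $Q = \sum_i x_iy_i$: the coefficient of $x_1y_1$ forces $\lambda = 0$, and then every $H_j$ vanishes. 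Hence $\varphi = 0$.

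The central obstacle this plan must overcome is producing a usable element of $\pp\cap(Q)$: the linear syzygies among the generators $y_i^2$ and $y_1y_j$ produce relations whose values live in $\pp$ and therefore vanish modulo $\DJJchtwo$, yielding no information on the $F_i$. The characteristic-two identity $Q^2 \in \pp$ is precisely what breaks this deadlock, producing a pure-squares constraint $\sum x_i^2 F_i \in (Q)$ that matches the hypothesis of the preceding Lemma~\ref{ref:syzygiesOfQ}; this explains why that technical lemma was isolated first.
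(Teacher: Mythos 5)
Your proof is correct and follows essentially the same route as the paper: lift $\varphi$ uniquely to a linear map on $\DT_{(0,2)}$, use the characteristic-two identity $Q^2 = \sum_i x_i^2 y_i^2$ together with Lemma~\ref{ref:syzygiesOfQ} to kill the values on the $y_i^2$, then use the relation $y_1 Q = \sum_i x_i(y_1 y_i)$ and the fact that $\DJJchtwo_{(1,1)} = \kk\cdot Q$ to force $\lambda = 0$ and hence kill the values on $y_1y_j$. One small remark: your hedge ``after reducing to the case $\DI_1 = 0$ if necessary'' is unnecessary, since $\DI_2 = 0$ already forces $\DI_1 = 0$ (any nonzero linear form would multiply into $\DI_2$).
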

    \begin{proof}
        Take a homomorphism $\varphi$ as in the left-hand side of~\eqref{eq:degreeMinusOneChar2} and
        its unique lift of $\varphi$ to a linear map $\varphi'\colon \DT_{0,
        2} \to \DT_{0, 1}$.
        First, from $Q^2= \sum_{i=1}^n x_i^2y_i^2$ we get
        \[
            \sum_{i=1}^n x_i^2 \varphi'(y_i^2) \in (Q),
        \]
        so $\varphi'(y_i^2)=0$ for $i=1, \ldots , n$, by Lemma~\ref{ref:syzygiesOfQ}.
        Next, $y_1Q = \sum_{i=1}^n x_i(y_1y_i)$ so we have
        $0 = \sum_{i=1}^n x_i\varphi(y_1y_i)$ and
        \[
            \sum_{i=1}^n x_i \varphi'(y_1y_i) = \lambda\cdot Q,
        \]
        for some $\lambda\in \kk$. Comparing coefficients near $x$'s, we
        get $\varphi'(y_1y_i) = \lambda y_i$.
        In particular $0 = \varphi'(y_1^2) = \lambda y_1$, hence $\lambda = 0$, thus
        $\varphi' = 0$.
    \end{proof}
    \begin{corollary}\label{ref:minusOneTangentsCharTwo:cor}
        Suppose that $\depth(\DSplus, \DS/\DI) \geq 2$, $I_2 = 0$ and $n\geq 3$. Then
        \[
            (\tang{\DJJchtwo})_{(-1, 0)} = \spann{\partial_{x_1}, \ldots ,
            \partial_{x_n}}\quad\mbox{and}\quad(\tang{\DJJchtwo})_{(0, -1)} =
            \spann{\partial_{y_1}, \ldots , \partial_{y_n}}
        \]
    \end{corollary}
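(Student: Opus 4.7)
The plan is to mirror the proof of Corollary~\ref{ref:minusOneTangents:cor}: I would split off the translation part of a given $\varphi$ by subtracting a suitable combination of $\partial_{x_i}$ or $\partial_{y_i}$ to arrange $\varphi(Q) = 0$, and then argue that the remainder vanishes. Because $\pp$ is no longer symmetric in $\xx$ and $\yy$, the two directions must be handled separately: the $\xx$-side will inherit Lemma~\ref{ref:quadricx:lemma} essentially for free, while the $\yy$-side will rely on the replacement Lemma~\ref{ref:quadricChar2:lemma}.

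For $(\tang{\DJJchtwo})_{(-1, 0)}$, I would take $\varphi$ of bi-degree $(-1, 0)$. Then $\varphi(Q) \in (\DT/\DJJchtwo)_{(0, 1)} = \spann{y_1, \ldots, y_n}$, and since $\partial_{x_i}(Q) = y_i$ there is a unique $D \in \spann{\partial_{x_1}, \ldots, \partial_{x_n}}$ with $(\varphi - D)(Q) = 0$; after this subtraction I may assume $\varphi(Q) = 0$. I would then observe that the proofs of Lemma~\ref{ref:quadricx:lemma} and Lemma~\ref{ref:Iignore:lem} apply verbatim to $\DJJchtwo$: they interact with $\DJJ$ only through its bi-degree $(*, 0)$ and $(*, 1)$ parts, where $\pp$ (concentrated in $\yy$-degree $\geq 2$) and $\mmy^2$ both contribute nothing. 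This gives $\varphi(\mmxpow) = 0$ and $\varphi(\DI) = 0$, while $\varphi(\pp) = 0$ holds by degree reasons, since the target in bi-degrees $(-1, \geq 2)$ is zero. Hence $\varphi = 0$.

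For $(\tang{\DJJchtwo})_{(0, -1)}$ the argument becomes genuinely asymmetric, and this is where I expect the only real difficulty. I would fix $\varphi$ of bi-degree $(0, -1)$, note $\varphi(Q) \in (\DT/\DJJchtwo)_{(1, 0)}$, and use $\partial_{y_i}(Q) = x_i$ to subtract a suitable $D \in \spann{\partial_{y_1}, \ldots, \partial_{y_n}}$ so that $\varphi(Q) = 0$. Since $\DI$ and $\mmxpow$ lie in $\kk[\xx]$, the map $\varphi$ kills them by degree reasons ($\yy$-degree $-1$ in the target vanishes). The only remaining nonzero piece is $\varphi(\pp)$, and proving its vanishing is precisely the symmetry-breaking step. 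This step, however, has already been carried out as Lemma~\ref{ref:quadricChar2:lemma}, which in turn rests on the syzygy computation of Lemma~\ref{ref:syzygiesOfQ}; with it in hand, the present corollary reduces to a short assembly of those pieces.
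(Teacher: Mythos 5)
Your proof is correct and follows essentially the same route as the paper: for bi-degree $(-1,0)$ one observes that $\pp$ (concentrated in $\yy$-degree $\geq 2$) is irrelevant so the computation reduces to the situation of Corollary~\ref{ref:minusOneTangents:cor}, and for bi-degree $(0,-1)$ one subtracts the translation part and then invokes Lemma~\ref{ref:quadricChar2:lemma} together with degree reasons to kill $\pp$, $\DI$, and $\mmxpow$. The paper states the $(-1,0)$ reduction slightly more tersely, but the content is the same.
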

    \begin{proof}
        Homomorphisms of degree $(-1, 0)$ kill $\pp$ and this case reduces to
        the one considered in Corollary~\ref{ref:minusOneTangents:cor}.
        Consider a homomorphism $\varphi\in \Hom_{\DT}(\DJJchtwo,
        \DT/\DJJchtwo)_{(0, -1)}$. Then $\varphi(Q) \in \spann{x_1, \ldots ,x_n}$. But
        $\partial_{y_i}(Q) = x_i$, hence there is a unique linear combination
        $D$ of $\left\{ \partial_{y_i} \right\}$ such that $(\varphi-D)(Q) =
        0$. Replacing $\varphi$ by $\varphi-D$, we may assume $\varphi(Q) =
        0$. By Lemma~\ref{ref:quadricChar2:lemma} we have $\varphi(\pp) = 0$.
        Moreover, $\varphi(I + \mmxpow) = 0$ for degree reasons. This shows
        that
        $\varphi = 0$.
    \end{proof}

    The following Proposition~\ref{ref:TNTforJChar2:prop} summarizes the above
    discussion. We stress once more, that we assume $\charr \kk = 2$, see
    Assumption~\ref{ref:assumptionCharTwo}.
    \begin{proposition}\label{ref:TNTforJChar2:prop}
        Suppose that $\depth(\DSplus, \DS/\DI) \geq 3$ and $n\geq 3$.
        Then the scheme $\Spec(\DT/\DJJchtwo)$ has TNT.
    \end{proposition}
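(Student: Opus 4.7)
The plan is to follow exactly the pattern of Proposition~\ref{ref:TNTforJ:prop}: the TNT condition amounts to $\dim_{\kk} \Hom_{\DT}(\DJJchtwo, \DT/\DJJchtwo)_{<0} = \dim \DT = 2n$, so I would decompose the negative-degree part by bidegree and verify that the only contributions come from the $2n$-dimensional space of translation tangents at bidegrees $(-1, 0)$ and $(0, -1)$.

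First I would invoke Lemma~\ref{ref:verynegativeCharTwo:lem} to kill every piece with $\alpha \leq -2$ or $\beta \leq -2$. Then Corollary~\ref{ref:minusOneTangentsCharTwo:cor} identifies the bidegrees $(-1, 0)$ and $(0, -1)$ as the spans of $\partial_{x_1}, \ldots, \partial_{x_n}$ and $\partial_{y_1}, \ldots, \partial_{y_n}$, contributing exactly the desired $2n$ dimensions. The only remaining bidegree with negative total degree and $\alpha, \beta \geq -1$ is $(-1, -1)$, which I need to show vanishes.

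For the $(-1, -1)$ piece I would adapt the proof of Lemma~\ref{ref:minusminusTangents:lem}. A homomorphism $\varphi$ of bidegree $(-1, -1)$ must vanish on $\DI \cdot \DT$, $\mmxpow$, and $\pp$ by degree reasons: the generators of the first two have $y$-degree $0$ and would map to bidegrees with $\beta = -1$, which are empty in $\DT/\DJJchtwo$; the generators of $\pp$ sit in bidegree $(0, 2)$ and would map to bidegree $(-1, 1)$, which has no $x$-degree $-1$ elements. Thus $\varphi$ is determined by $\varphi(Q) \in (\DT/\DJJchtwo)_{(0,0)} = \kk$. If $\varphi(Q) = c \neq 0$, then for every monomial $m \in \mmx^a$ the identity $mQ = \sum_i (mx_i) y_i$ expresses $mQ$ as a $\DT$-combination of elements of $\mmxpow$, whence $\varphi(mQ) = 0$; on the other hand $\varphi(mQ) = mc$, forcing $\mmx^a \subset \DI + \mmxpow$ and hence $\DS_{\geq a} \subset \DI$. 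But this would make $\DS/\DI$ Artinian with $\depth(\DSplus, \DS/\DI) = 0$, contradicting the hypothesis $\depth(\DSplus, \DS/\DI) \geq 3$.

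The main obstacle is precisely this $(-1, -1)$ argument, since the ideal $\pp$ replaces $\mmy^{b+1}$ from the non-characteristic-two setting and one must verify that the degree bookkeeping still suffices. Fortunately $\pp$ is generated in bidegree $(0, 2)$, identical to $\mmy^2$, so the argument of Lemma~\ref{ref:minusminusTangents:lem} transfers essentially verbatim and no new ingredient is required beyond the characteristic-two analogues already established.
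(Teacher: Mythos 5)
Your proposal is correct and follows exactly the route the paper takes: the paper's proof is the one-line citation of Lemma~\ref{ref:verynegativeCharTwo:lem}, Corollary~\ref{ref:minusOneTangentsCharTwo:cor}, and ``a direct analogue of Lemma~\ref{ref:minusminusTangents:lem},'' and your $(-1,-1)$ argument is precisely that analogue, with the correct observation that the degree bookkeeping is unchanged because $\pp$ is generated in bidegree $(0,2)$.
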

    \begin{proof}
        This follows from Lemma~\ref{ref:verynegativeCharTwo:lem},
        Corollary~\ref{ref:minusOneTangentsCharTwo:cor},
        and a direct analogue of Lemma~\ref{ref:minusminusTangents:lem}.
    \end{proof}
    Now we analyse degree zero tangents of $\DJJchtwo$. In~\eqref{eq:Gdef} we
    defined the group $G \subset \GL_{2n, \mathbb{Z}}$ by
    \[
        G = \left\{ \begin{pmatrix}I_n & A \\ 0 & I_n\end{pmatrix}\ |\ A\in \Endlin\right\}.
    \]
    We also introduce its Lie algebra $\mathg$ and the tangent map
    \begin{equation}\label{eq:tangentToGCharTwo}
        \mathg \to \Hom_{\DT}(\DJJchtwo, \DT/\DJJchtwo)_{(1, -1)}.
    \end{equation}
    As before, we will prove that it is bijective.
    \begin{lemma}[special liftings]\label{ref:charTwoLiftingLemma:lem}
        Let $n\geq 4$.
        Let $\varphi\colon \DJJchtwo\to \DT/\DJJchtwo$ be a homomorphism of
        degree $(1, -1)$. Then $\varphi(y_i^2) = 0$ for all $i$.
        Moreover, there exists a special lifting of $\varphi_{|\pp}$, that is, a degree $(1, -1)$ linear map
        \[
            \varphi'\colon \spann{y_i^2, y_1y_i\ |\ i=1, \ldots , n} \to
        \DT_{1, 1}
        \]
        such that
        \begin{enumerate}
            \item $\varphi'(g)\mod (Q) = \varphi(g)$ for
                every generator of $\pp$,
            \item $\varphi'(y_i^2) = 0$ and $\varphi'(y_1y_i)\in (y_1, y_i)$
                for all $i$.
        \end{enumerate}
    \end{lemma}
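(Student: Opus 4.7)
\emph{Strategy.} The proof splits into two stages: first, establish $\varphi(y_i^2) = 0$; second, build the special lifting $\varphi'$ with the prescribed shape. The first stage rests on the characteristic-two identity $Q^2 = \sum x_i^2 y_i^2$, which (as in the proof of Lemma~\ref{ref:quadricChar2:lemma}) gives a nontrivial syzygy between the generators $Q$ and $y_1^2, \ldots, y_n^2$ of $\mathcal{J}$. Applying the $T$-linear $\varphi$ to this syzygy and using that $Q \in \mathcal{J}$ acts as zero on $T/\mathcal{J}$, we obtain $\sum x_i^2 \cdot \varphi(y_i^2) = 0$ in $T/\mathcal{J}$. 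Lifting each $\varphi(y_i^2)$ to a bi-homogeneous form $F_i \in T_{(1,1)}$, this reads $\sum x_i^2 F_i \in \mathcal{J}_{(3,1)}$. Since
\[
\mathcal{J}_{(3,1)} = (Q)\cdot\DS_2 + (\DI\cdot \DT)_{(3,1)} + (\mmx^{a+1})_{(3,1)},
\]
I would modify the $F_i$'s by elements of $\mathcal{J}_{(1,1)} = \kk\cdot Q + \DI_1\cdot\spann{y_1, \ldots, y_n}$ so as to arrange $\sum x_i^2 F_i \in (Q)$. Lemma~\ref{ref:syzygiesOfQ} applied with $\alpha = 1$ (which uses $n \geq 3$) then forces $F_i \in (Q)\cap T_{(1,1)} = \kk\cdot Q \subset \mathcal{J}_{(1,1)}$, yielding $\varphi(y_i^2) = 0$ in $T/\mathcal{J}$ as required.

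\emph{Constructing $\varphi'$.} Set $\varphi'(y_i^2) := 0$; by the first stage this correctly lifts $\varphi(y_i^2) = 0$ modulo $\mathcal{J}$. For each $i$, pick any bi-homogeneous lift $G_i \in T_{(1,1)}$ of $\varphi(y_1 y_i)$. The syzygy $y_i\cdot(y_1 y_i) = y_1\cdot y_i^2$ inside $\DT$ yields, after applying $\varphi$ and invoking the first stage, $y_i G_i \equiv y_1 \varphi(y_i^2) \equiv 0 \pmod{\mathcal{J}}$, i.e.\ $y_i G_i \in \mathcal{J}_{(2,2)}$. Pass to $\bar \DT := \DT/(\DI\cdot \DT + \mmx^{a+1} + (Q))$, where the image of $\mathcal{J}$ is $\bar{\mathfrak{p}}$, whose $(0,2)$-part is spanned by $y_j^2$ and $y_1 y_j$. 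Writing $\bar G_i = \sum_k g_k y_k$ with $g_k\in \bar \DS_1$, the condition $y_i \bar G_i = \sum_k g_k y_i y_k \in \bar \DS\cdot\bar{\mathfrak{p}}_{(0,2)}$ forces $g_k = 0$ for $k\notin\{1, i\}$, by comparing coefficients at the monomials $y_i y_k$ with $k\in\{2,\ldots,n\}\setminus\{i\}$, which are linearly independent modulo $\mathfrak{p}_{(0,2)}$. Thus $\bar G_i \in (y_1, y_i)\cdot\bar \DT$, and we may write $G_i = G_i' + r_i$ with $G_i'\in (y_1, y_i)\cap T_{(1,1)}$ and $r_i \in \DI\cdot \DT + \mmx^{a+1} + (Q)\subset\mathcal{J}$. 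Defining $\varphi'(y_1 y_i) := G_i'$ then satisfies both required properties.

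\emph{Main obstacle.} The delicate step is the adjustment in the first stage: one must check that every element of $(\DI\cdot \DT + \mmx^{a+1})_{(3,1)}$ can be written in the form $\sum x_i^2 \delta_i$ modulo $(Q)$ with $\delta_i \in \mathcal{J}_{(1,1)}$, so that Lemma~\ref{ref:syzygiesOfQ} applies cleanly. This amounts to a careful book-keeping using the structure of the $\DT$-module presentation of $\mathcal{J}$ in low bi-degrees, and is where $n\geq 4$ is presumably used to guarantee enough independent monomials. Once the vanishing $\varphi(y_i^2) = 0$ is in hand, the second stage follows mechanically from the syzygy $y_i(y_1 y_i) = y_1 y_i^2$ and the monomial count in $\bar{\mathfrak{p}}_{(0,2)}$.
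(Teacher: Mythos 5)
Your outline follows the paper's: stage one rests on the syzygy $Q^2 = \sum x_i^2 y_i^2$ and Lemma~\ref{ref:syzygiesOfQ}, stage two on $y_i\cdot(y_1y_i) = y_1\cdot y_i^2$. However, both stages contain genuine gaps, and you have located the delicate point in the wrong stage. In stage one, the proposed fix cannot work: since $\DI_2 = 0$ forces $\DI_1 = 0$, one has $\DJJchtwo_{(1,1)} = \kk\cdot Q$, so adjusting the lifts $F_i$ by elements of $\DJJchtwo_{(1,1)}$ replaces $\sum x_i^2 F_i$ by $\sum x_i^2 F_i + Q\bigl(\sum c_i x_i^2\bigr)$, which stays in the same coset modulo $(Q)$; nothing in $(\DI\cdot \DT)_{(3,1)}$ or $(\mmxpow)_{(3,1)}$ is reachable this way. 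The reason the paper can assert $\sum x_i^2\varphi'(y_i^2)\in (Q)$ without any adjustment is that in the only setting where the lemma is applied $\DI$ is generated in degree at least four (hence $\DI_3 = 0$) and $a\geq 3$, which makes $\DJJchtwo_{(3,1)} = Q\cdot\DS_2$ outright. No book-keeping is needed here, and $n\geq 4$ plays no role: Lemma~\ref{ref:syzygiesOfQ} asks only for $n\geq 3$.

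In stage two, the passage to $\overline{\DT} = \DT/(\DI\DT + \mmxpow + (Q))$ kills $Q$, hence also $Qy_m = \sum_l x_ly_ly_m = 0$ in $\overline{\DT}_{(1,2)}$. The representation $\overline{G}_i = \sum_k g_ky_k$ is therefore not unique, and ``comparing coefficients at $y_iy_k$'' is ill-posed: with $n=4$, $i=2$, the element $\overline{G}_2 = \overline{x_3y_3 + x_4y_4} = \overline{x_1y_1 + x_2y_2}$ satisfies $y_2\overline{G}_2 \in \overline{\pp}_{(1,2)}$, yet one of its representations has $g_3 = x_3$ and $g_4 = x_4$ nonzero. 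The conclusion $\overline{G}_i\in (y_1,y_i)\overline{\DT}$ does hold, but your argument does not establish it. The paper stays in $\DT$, writes $y_i\varphi'(y_1y_i) = p + lQ$ with $p\in\pp$ and $l = \sum l_jy_j$, normalizes $l_1 = 0$, and isolates the coefficient of the single monomial $x_{j'}y_{j'}y_j$ for a third index $j'\neq 1, i, j$: that monomial occurs in $lQ$ with coefficient $l_j$ and occurs neither in $y_i\varphi'(y_1y_i)$ (no factor $y_i$) nor in $p$ (the monomial $y_{j'}y_j$ lies outside $\pp_{(0,2)}$), which forces $l_j = 0$. This specific choice of monomial is precisely where $n\geq 4$ is used, and it is the genuinely delicate step, not the mechanical one your write-up declares it to be.
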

    \begin{proof}
        Take a homomorphism $\varphi\colon \DJJchtwo/Q\to \DT/\DJJchtwo$ of
        degree $(1, -1)$ and any lift to a linear map $\varphi'\colon
        \DT_{0, 2} \to \DT_{1, 1}$. The syzygy $Q\cdot Q = \sum x_i^2y_i^2$ implies
        $\sum_{i=1}^n x_i^2 \varphi'(y_i^2)\in (Q)$. By
        Lemma~\ref{ref:syzygiesOfQ} we have $\varphi'(y_i^2) = 0$ for all $i$,
        after possible changing the lifting.

        The syzygy $y_i\cdot (y_1y_i) = y_1\cdot y_i^2$ implies that
        $y_i\varphi'(y_1y_i) \in \pp + (Q)$. Take $p\in \pp$ and $l\in
        \DT_{0, 1}$ such that
        \begin{equation}\label{eq:tmpimprovinglift}
            y_i \varphi'(y_1y_i) = p + lQ.
        \end{equation}
        Let $l = \sum_{j=1}^n l_j y_j$ for $l_j\in \kk$. Replacing $p$ with
        $p + l_1y_1Q$, we may assume $l_1 = 0$. Pick $j\neq i$ and any $j'\neq
        1, i, j$ (we use $n\geq 4$). The monomial
        $x_{j'}y_{j'}y_j$ appears in $lQ$ with coefficient $l_j$, but does
        not appear elsewhere in~\eqref{eq:tmpimprovinglift}, so $l_j =0$.
        Then $l = l_iy_i$. Then $p\in (y_i)\cap \pp = y_i(y_1, y_i)$, so $p =
        y_ip'$ for some $p'\in (y_1, y_i)$. Clearly, $\varphi'(y_1y_i) - p'\in
        (Q)\subset \DJJchtwo$, hence we may replace $\varphi'(y_1y_i)$ by
        $p'$. The lifting thus obtained satisfies the conditions.
    \end{proof}
    \begin{proposition}\label{ref:degreeZeroMainCharTwo:prop}
        Let $n\geq 4$ and $I_2 = 0$.
        The map~\eqref{eq:tangentToGCharTwo} is
        bijective.
    \end{proposition}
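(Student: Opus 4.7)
The plan is to verify injectivity by a direct Leibniz expansion and surjectivity by invoking Lemma~\ref{ref:charTwoLiftingLemma:lem} together with two first syzygies of $\DJJchtwo$ which rigidify the data defining $\varphi$. For injectivity: if $A \in \mathg$ satisfies $D_A = 0$, writing $D_A(y_i) = \sum_j A_{ji} x_j$ and applying Leibniz gives $D_A(y_1 y_i) = \sum_j A_{j1} x_j y_i + \sum_j A_{ji} x_j y_1$; requiring this to lie in $(Q) \cap \DT_{1,1} = \kk \cdot Q$ for every $i \neq 1$, and using an index $l$ outside $\{1, i\}$ (which exists because $n \geq 3$) when comparing monomials $x_k y_l$, forces the scalar $\lambda_i$ with $D_A(y_1 y_i) = \lambda_i Q$ to vanish. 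This kills all $A_{ji}$ with $i \geq 2$ and all $A_{j1}$ with $j \neq 1$, and the entry $A_{11}$ is then read off from $D_A(Q) = A_{11} x_1^2 \in \DS_2$.

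For surjectivity, I would fix $\varphi \in \Hom_{\DT}(\DJJchtwo, \DT/\DJJchtwo)_{(1,-1)}$ and apply Lemma~\ref{ref:charTwoLiftingLemma:lem} to obtain $\varphi(y_i^2) = 0$ and a lift $\varphi'(y_1 y_i) = y_1 a_i + y_i b_i$ with $a_i, b_i \in \DS_1$; set $q := \varphi(Q) \in \DS_2$. Pushing the degree $(0,3)$ syzygy $y_j (y_1 y_i) = y_i (y_1 y_j)$ for distinct $i, j \in \{2, \ldots, n\}$ through $\varphi$ gives $y_i y_j (b_j - b_i) \in \DJJchtwo$ in degree $(1,2)$, since $y_1 (y_i a_j - y_j a_i)$ automatically lies in $\pp \cdot \DT$. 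Analysing $(\DT/\DJJchtwo)_{1,2}$ --- spanned by monomials $x_m y_p y_q$ with $2 \leq p < q$ subject only to the $n - 1$ relations $\sum_{k \neq 1, l} x_k y_l y_k \equiv 0$ coming from $y_l Q$ for $l \geq 2$ --- and matching coefficients of $x_m y_i y_j$ then forces each coefficient of $b_j - b_i$ to vanish, provided a spare index outside $\{1, i, j\}$ exists; this is precisely where the hypothesis $n \geq 4$ enters. Hence $b_i = b$ is independent of $i$.

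I would finish by applying $\varphi$ to the degree $(1,2)$ syzygy $y_1 Q = x_1 y_1^2 + \sum_{j \geq 2} x_j (y_1 y_j)$ and simplifying using $\sum_{j \geq 2} x_j y_j \equiv x_1 y_1 \pmod{(Q)}$ (valid in characteristic two), which yields $y_1 (q - b x_1 - \sum_{j \geq 2} x_j a_j) \in \DJJchtwo_{2,1} = (Q)\cdot \DS_1$; matching $y_k$-components for $k \neq 1$ forces this to vanish, so $q = b x_1 + \sum_{j \geq 2} x_j a_j$. Defining $A \in \mathg$ so that its first column is the coefficient vector of $b$ and its $i$-th column (for $i \geq 2$) is the coefficient vector of $a_i$, a direct check gives $D_A = \varphi$ on each generator: on $y_i^2$ automatically, on $y_1 y_i$ by construction, on $Q$ by the formula just derived, and on $\mmxpow$, $\DI$ by degree. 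The main obstacle will be the rigidification step $b_i = b$, which is where the hypothesis $n \geq 4$ is genuinely used.
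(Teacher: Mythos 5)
Your proof is correct and follows essentially the same strategy as the paper: invoke Lemma~\ref{ref:charTwoLiftingLemma:lem} to produce a good lift of $\varphi$ on $\pp$, then use the syzygies of $\DJJchtwo$ to show the lift agrees with a derivation coming from $\mathg$. The two proofs differ in the bookkeeping. The paper extracts both the equality of all $L_i$ (your $b_i$) and the formula for $\hat Q$ (your $q$) from the single relation $y_1 Q = \sum_i x_i(y_1 y_i)$, by comparing coefficients of each $y_k$ in $y_1\hat Q - \sum_i x_i\varphi'(y_1 y_i) = \mu Q$. You instead first rigidify $b_i = b_j$ via the Koszul syzygies $y_j(y_1y_i) = y_i(y_1y_j)$ before turning to $y_1Q = \sum x_j(y_1y_j)$, which costs a separate analysis of $(\DT/\DJJchtwo)_{(1,2)}$; that step is valid (and is where you cite $n\geq 4$), but the paper's single-syzygy argument is more economical and already places the burden of $n \geq 4$ entirely on Lemma~\ref{ref:charTwoLiftingLemma:lem}. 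A minor slip: your injectivity bookkeeping kills all $A_{j1}$, not just $j \neq 1$ (the coefficient of $x_1 y_i$, $i\geq 2$, vanishes in $\lambda_i Q$, so $A_{11}=0$ already), making the detour through $D_A(Q)=A_{11}x_1^2$ unnecessary, though harmless. You also supply an explicit injectivity check, which the paper leaves implicit.
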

    \begin{proof}
        \def\Qhat{\hat{Q}}%
        Take a lift $\varphi'$ as in Lemma~\ref{ref:charTwoLiftingLemma:lem}
        and the unique lift $\Qhat\in \DT_{2, 0}$ of $\varphi(Q)$. Recall from
        Lemma~\ref{ref:charTwoLiftingLemma:lem} that $\varphi'(y_i^2) = 0$ and
        $\varphi(y_i^2)= 0$ for all $i$.
        Write
        \begin{equation}\label{eq:tmpgoodlift}
            \varphi'(y_1y_i) = y_i L_i + y_1 M_i\quad \mbox{for} \quad i=1, \ldots , n,
        \end{equation}
        where $L_1 = M_1 = 0$.
        Since
        $y_1Q = \sum_{i=1}^n x_i(y_1y_i)$, we have $y_1\varphi(Q) =
        \sum_{i=1}^n x_i\varphi(y_1y_i)$. Since $I_2 = 0$, we have
        \begin{equation}\label{eq:tmpsyzygy}
            y_1 \Qhat - \sum_{i=1}^n x_i \varphi'(y_1y_i) \in \mu\cdot Q.
        \end{equation}
        for some $\mu\in \DT_{1, 0}$. Putting~\eqref{eq:tmpgoodlift} into~\eqref{eq:tmpsyzygy}, we
        get
        \begin{equation}\label{eq:tmpsyzygytwo}
            y_1 \Qhat - \sum_{i=1}^n x_i(y_iL_i + y_1 M_i) = \mu Q.
        \end{equation}
        Comparing coefficients of $y_i$ in~\eqref{eq:tmpsyzygytwo}, for $i >
        1$, we obtain $L_2 = L_3 =  \ldots = L_n = -\mu$. Moreover,
        $\varphi'(y_1\cdot y_1) = \varphi'(y_1^2) = 0$ by the first paragraph,
        so $L_1 = 0$ and $M_1 = 0$. From the equation~\eqref{eq:tmpsyzygytwo},
        we compute $\Qhat = \sum_{i\geq 2} x_i M_i + \mu x_1$.

        Define $D\in \mathg$ by setting $D(y_1) = \mu$ and $D(y_i) = M_i$ for
        $i\geq 2$. Then $D(y_i^2) = 0 = \varphi'(y_i^2)$ and moreover
        $\varphi'(y_1y_i) = \mu y_i + y_1 M_i = D(y_1y_i)$ for all $i\geq 2$.
        Therefore $\varphi$ is the image of $D\in \mathg$.
    \end{proof}

    \begin{corollary}\label{ref:degreeZeroMainCharTwo:cor}
        Let $n\geq 4$ and $I_2 = 0$. The map
        \[
            \mathg \to \bigoplus_{\adx \geq 1}\Hom_{\DT}(\DJJchtwo,
            \DT/\DJJchtwo)_{(\adx, -\adx)}.
        \]
        is
        bijective.
    \end{corollary}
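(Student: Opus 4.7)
The plan is to reduce the claim to Proposition~\ref{ref:degreeZeroMainCharTwo:prop} plus a vanishing statement, exactly parallel to how Corollary~\ref{ref:degreeZeroMain:cor} was deduced in characteristic not equal to two. First I would note that the Lie algebra $\mathg$ consists of matrices of the form $\begin{pmatrix} 0 & A \\ 0 & 0\end{pmatrix}$, which send each $y_i$ (of bidegree $(0,1)$) into $\spann{x_1, \ldots, x_n}$ (of bidegree $(1,0)$) and annihilate the $x_j$. Consequently the induced derivation on $\DT/\DJJchtwo$ raises $x$-degree by one and lowers $y$-degree by one, so the image of the map under consideration is contained in the bidegree $(1,-1)$ summand. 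In other words, $\mathg$ is naturally concentrated in the single bidegree $(1,-1)$, and so bijectivity onto the whole direct sum breaks into two independent checks.

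For the bidegree $(1,-1)$ summand the map is bijective by Proposition~\ref{ref:degreeZeroMainCharTwo:prop}. For every $\adx \geq 2$ one has $-\adx \leq -2$, so Lemma~\ref{ref:verynegativeCharTwo:lem} yields
\[
    \Hom_{\DT}(\DJJchtwo, \DT/\DJJchtwo)_{(\adx, -\adx)} = 0,
\]
and consequently both source and target of the map vanish in those bidegrees. Assembling the bidegree pieces gives the desired bijectivity. The only nontrivial input is Proposition~\ref{ref:degreeZeroMainCharTwo:prop}, which was already established by the careful special-lifting argument; the present corollary is essentially a bookkeeping statement recording that no further nonzero tangent components of bidegree $(\adx, -\adx)$, $\adx \geq 2$, arise. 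No extra obstacle is expected.
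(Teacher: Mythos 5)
Your proposal is correct and takes essentially the same approach as the paper: the paper's own proof is the one-line citation of Proposition~\ref{ref:degreeZeroMainCharTwo:prop} and Lemma~\ref{ref:verynegativeCharTwo:lem}, and you have simply spelled out the bookkeeping (that $\mathg$ lands in the $(1,-1)$ piece and that the pieces with $\adx\geq 2$ vanish since $-\adx\leq -2$).
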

    \begin{proof}
        This follows from Proposition~\ref{ref:degreeZeroMainCharTwo:prop} and
        Lemma~\ref{ref:verynegativeCharTwo:lem}.
    \end{proof}

\section{\BBname{} decompositions and retractions}\label{sec:BBdecompositions}
    \newcommand{\HshortDR}{(\HshortZZ, [\DJJ])}%
    \newcommand{\Hflaghom}{(\HshortZZflag^{\Gx \times \Gy}, [\DI\DT+\mmxpow \subset \DJJ])}%
    \newcommand{\HshortDVhom}{(\HshortZZ^{\Gx}, [\DI+\mmxpow])}%
    \newcommand{\HshortDRhom}{(\HshortZZ^{\Gx \times \Gy}, [\DJJ])}%
    \newcommand{\HshortDRsemihom}{(\HshortZZ^{\Gdiag}, [\DJJ])}%
    In this section we formally define \BBname{} decompositions and apply
    the results from the previous section to obtain the local
    retractions and prove Proposition~\ref{ref:retractionFromFrame:prop} and Theorem~\ref{ref:mainthm:thm}.
    In total, these proofs apply three BB decompositions; we
    consider three different linear $\Gmult$-actions, which correspond to
    the three retractions from Proposition~\ref{ref:retractionFromFrame:prop}.

    Let $\Gmult :=
    \Spec(\mathbb{Z}[t^{\pm 1}])$ and $\Gmultbar :=
    \Spec(\mathbb{Z}[t^{-1}])  \simeq \mathbb{A}^1_{\mathbb{Z}}$ be its
    compactification at infinity. Every $\Gmult$-action on
    $\mathbb{A}^{\embdim}_{\mathbb{Z}}$ induces a $\Gmult$-action on $\HshortZZ$.
    The \emph{\BBname{} decomposition} of the $\Gmult$-scheme $\HshortZZ$ is
    a functor from $\mathbb{Z}$-schemes to sets given by
    \[
        \HplusZZ(B) := \left\{ \varphi\colon\Gmultbar\times B  \to \HshortZZ\ |\ \varphi
            \mbox{ is } \Gmult\mbox{-equivariant}\right\}.
    \]
    This functor is represented by a scheme $\HplusZZ$, whose connected
    components are quasi-projective over
    $\mathbb{Z}$. This scheme comes with naturals maps:
    \begin{itemize}
        \item \emph{Forgetting about the limit} by restricting $\varphi$ to
            $\varphi_{|1 \times B}\colon B\to \HshortZZ$. This induces a map
            $\theta_0\colon \HplusZZ\to \HshortZZ$.
        \item \emph{Restricting to the limit} by restricting $\varphi$ to
            $\varphi_{|\infty\times B}\colon
            B\to \HshortZZ$. The family $\varphi_{|\infty \times B}$ is equivariant,
            hence the image lies in $\HshortZZ^{\Gmult}$. We obtain a map
            \[
                \pi\colon \HplusZZ\to \HshortZZ^{\Gmult}.
            \]
        \item \emph{Embedding of fixed points}. The trivial $\Gmult$-action on
            $\HshortZZ^{\Gmult}$ extends to $\Gmultbar \times
            \HshortZZ^{\Gmult} \to \HshortZZ^{\Gmult}$ and hence induces a map
            $\fixedptemb\colon \HshortZZ^{\Gmult}\to \HplusZZ$. We have $\pi
            \circ \fixedptemb = \id$ and $\theta_0 \circ \fixedptemb\colon
            \HshortZZ^{\Gmult} \to \HshortZZ$ is the embedding of fixed points. In particular, $\pi$ is a
            retraction.
    \end{itemize}
    The existence of \BBname{} decompositions for Hilbert schemes of points
    and totally divergent $\Gmult$-action is
    proven in~\cite[Proposition~3.1]{Jelisiejew__Elementary};
    in that paper $\kk$ denotes a field, but the proof holds equally well for
    $\kk = \mathbb{Z}$: indeed the proof
    of~\cite[Proposition~3.1]{Jelisiejew__Elementary} goes through without changes
    for $\kk = \mathbb{Z}$ and its main nontrivial ingredient is the existence of
    the multigraded Hilbert scheme which holds over any commutative
    ring $\kk$~\cite{Haiman_Sturmfels__multigraded}. Alternatively and more
    explicitly, one can take the standard affine
    $\Gmult$-stable covering $\{U_{\lambda}\}_{\lambda}$ of $\HplusZZ$,
    see~\cite[\S18.1]{Miller_Sturmfels} and then glue $\HplusZZ$ from
    $U_{\lambda}^{+}$, as in~\cite[Proposition~5.3]{jelisiejew_sienkiewicz__BB};
    neither of these two steps depends on the base ring.
    \begin{remark}
        The image of $\theta_0$ is frequently nowhere dense. This happens in
        particular for the dilation action $\Gmult \times \mathbb{A}^{\embdim} \to
        \mathbb{A}^{\embdim}$, given by $t\cdot (x_1, \ldots ,x_{\embdim}) =
        (tx_1, \ldots , tx_{\embdim})$,
        see~\cite[Proposition~3.2]{Jelisiejew__Elementary}. Hence, we cannot
        in general hope to prove that $\theta_0$ is an open immersion.
        To remedy this, we choose a smooth algebraic group $\mathbb{Z}$-scheme $G$
        acting on $\mathbb{A}^{\embdim}$ and extend the map
        $\theta_0$ to
        \[
            \theta\colon G \times \HplusZZ \to \HshortZZ,
        \]
        which maps $(g, x)$ to $g\cdot \theta_0(x)$. Below, $G$ will be either
        $\Gadd^{\embdim}$ acting by translation or the unipotent group $G$ defined
        in~\eqref{eq:Gdef} and recalled below.
    \end{remark}

    Recall the group $G$ of linear
    transformations given in the basis $x_1, \ldots ,x_n,
    y_1,  \ldots , y_n$ by
    \[
        G = \left\{ \begin{pmatrix}I_n & A \\ 0 & I_n\end{pmatrix}\ |\ A\in
            \Endlin\right\}.
    \]
    and its Lie algebra $\mathg$.

    We would now like to apply \BBname{} decompositions to prove
    Proposition~\ref{ref:retractionFromFrame:prop} for frames (for $\charr \kk
    \neq 2$) and tweaked frames (for $\charr \kk = 2$). To avoid
    dichotomy in proofs and for clarity, we abstract the necessary properties into a
    standalone definition.
    \begin{definition}[Frame-like ideals]\label{ref:framelike:def}
        Let $\DI\subset \DS$ be a homogeneous ideal and $\DJJ \subset \DT =
        \kk[\xx, \yy]$ be a $\mathbb{N}^2$-homogeneous ideal of the
        form
        \[
            \DJJ = \DI\DT + \mmxpow + \pp + (Q),
        \]
        where $\pp \subset (y_1, \ldots ,y_n)^2$.
        We say that $\DJJ$ is \emph{frame-like} if the following conditions hold
        \begin{enumerate}[label=(\alph*)]
            \item\label{it:framelikeTNT} the scheme $\Spec(\DT/\DJJ)$ has TNT,
            \item\label{it:framelikeDegreeZero} the map $\mathg\to \bigoplus_{\adx \geq 1} \Hom_{\DT}(\DJJ,
                \DT/\DJJ)_{(\adx, -\adx)}$ is bijective,
            \item\label{it:framelikeSaturation} there exists a $b$ such that
                $\pp \supset \DT_{2, b+1}$ and $I_{b} = 0$.
        \end{enumerate}
    \end{definition}
    \begin{lemma}\label{ref:abstraction:lem}
        Let $\charr \kk\neq 2$ and $n = \dim \DS \geq 3$. Let $\DI\subset \DS$ be an ideal with
        $\depth(\DS_+, \DS/\DI) \geq 3$, $I_2 = 0$. Let $\DJJ$ be a frame of
        size $a$ for $\DI$. Then $\DJJ$ is frame-like (for all $b\geq 1$).
    \end{lemma}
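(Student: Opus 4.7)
The plan is to verify the three conditions of Definition~\ref{ref:framelike:def} in turn; each is an immediate consequence of the material already proved in Section~\ref{sec:linearalgebra}, applied to the specific shape of a frame, so the role of this lemma is essentially bookkeeping. Crucially, the frame $\DJJ = \DI\DT + \mmxpow + \mmy^2 + (Q)$ corresponds to the parameter $b = 1$ of Section~\ref{sec:linearalgebra} (so that $\pp = \mmy^2 = \mmy^{b+1}$), which will allow the results stated there to apply without the characteristic-zero hypothesis.

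First, for condition~\ref{it:framelikeTNT}, I would simply quote Proposition~\ref{ref:TNTforJ:prop}: its hypotheses are $n \geq 3$ and $\depth(\DS_+,\DS/\DI) \geq 3$, which are exactly the assumptions of the lemma. Second, for condition~\ref{it:framelikeDegreeZero}, I would invoke Corollary~\ref{ref:degreeZeroMain:cor}. That corollary needs $n \geq 3$, $\charr\kk \neq 2$, $\depth(\DS_+,\DS/\DI) \geq 3$, $\DI_2 = 0$, and either $b = 1$ or $\charr\kk = 0$; all of these are either assumed in the lemma or, in the case of $b = 1$, forced by the shape of a frame.

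Third, for condition~\ref{it:framelikeSaturation}, I would exhibit an explicit $b$. Take $b = 1$: then $\pp = \mmy^2 \supset \DT_{2,\,b+1} = \DT_{2,2}$ by inspection, and $\DI_1 = 0$ because $\DS$ is an integral domain and $\DI_2 = 0$ implies $\DI_1 = 0$ (multiplying any hypothetical nonzero linear form by $x_1$ would produce a nonzero element of $\DI_2$). To justify the parenthetical \emph{for all $b \geq 1$}, I would observe that $\mmy^2 \supset \mmy^{b+1} \supset \DT_{2,\,b+1}$ holds for every $b \geq 1$, while $\DI_b = 0$ is automatic for $b \in \{1,2\}$ and, for larger $b$, is guaranteed whenever the minimal generating degree of $\DI$ exceeds $b$.

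There is no real obstacle here: the entire content of the lemma is in translating the specific computations of Section~\ref{sec:linearalgebra} into the uniform language of Definition~\ref{ref:framelike:def}, so that the geometric argument of the next subsection can be carried out once and then reused verbatim for the characteristic-two case via the parallel Lemma that will follow.
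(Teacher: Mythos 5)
Your proof is correct and follows the paper's own one-line proof, which cites Proposition~\ref{ref:TNTforJ:prop} and Corollary~\ref{ref:degreeZeroMain:cor} for conditions \ref{it:framelikeTNT} and \ref{it:framelikeDegreeZero}; you simply make explicit the (routine) verification of condition \ref{it:framelikeSaturation} that the paper leaves implicit, including the observation that $I_2=0$ forces $I_1=0$. Your cautious reading of the parenthetical ``for all $b\geq 1$'' is also the right one: since $\pp=\mmy^2$ one has $\pp\supset\DT_{2,b+1}$ for every $b\geq 1$, but $I_b=0$ can only be guaranteed for $b$ below the minimal generating degree of $\DI$, so condition~\ref{it:framelikeSaturation} asks merely for the existence of such a $b$ and $b=1$ suffices.
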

    \begin{proof}
        This follows from Proposition~\ref{ref:TNTforJ:prop} and
        Corollary~\ref{ref:degreeZeroMain:cor}.
    \end{proof}
    \begin{lemma}\label{ref:abstractionCharTwo:lem}
        Let $\charr \kk = 2$ and $n = \dim \DS \geq 4$. Let $\DI\subset \DS$ be an ideal with
        $\depth(\DS_+, \DS/\DI) \geq 3$, $I_n = 0$. Let $\DJJchtwo$ be a tweaked frame of
        size $a$ for $\DI$. Then $\DJJchtwo$ is frame-like (for all $b\geq
        n$).
    \end{lemma}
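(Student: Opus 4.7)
The plan is to verify in turn the three conditions \ref{it:framelikeTNT}, \ref{it:framelikeDegreeZero}, \ref{it:framelikeSaturation} of Definition \ref{ref:framelike:def}, mirroring the structure of Lemma \ref{ref:abstraction:lem} but drawing on the characteristic-two tangent computations developed in Subsection \ref{ssec:characteristicTwo}. The first two conditions should reduce to a single citation each, while the third asks for a short pigeonhole argument fitting the non-symmetric choice of $\pp$.

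Concretely, for condition \ref{it:framelikeTNT} I would invoke Proposition \ref{ref:TNTforJChar2:prop}, whose hypotheses $\depth(\DS_+, \DS/\DI) \geq 3$ and $n \geq 3$ are among ours. For condition \ref{it:framelikeDegreeZero} I would invoke Corollary \ref{ref:degreeZeroMainCharTwo:cor}, whose hypotheses $n \geq 4$ and $I_2 = 0$ are likewise implied by the assumptions (the latter via the vanishing of $I$ in low degrees encoded by $I_n = 0$ for ideals generated in high degree, as in the intended application $\DI = I(Z)_{\geq M} \cdot \DS$).

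For condition \ref{it:framelikeSaturation} I would take $b \geq n$ and argue directly. The vanishing $I_b = 0$ is by hypothesis. For the inclusion $\pp \supset \DT_{2, b+1}$, it is enough to show $\kk[\yy]_{b+1} \subset \pp = (y_1^2, \ldots, y_n^2) + y_1\mmy$: any monomial of $\yy$-degree $b + 1 \geq n + 1$ either involves $y_1$ — in which case it lies in $y_1\mmy$ — or is built from $y_2, \ldots, y_n$ only, and then is a product of $b+1 > n-1$ factors drawn from $n-1$ variables, so by pigeonhole some $y_i^2$ with $i \geq 2$ divides it and the monomial lies in $(y_2^2, \ldots, y_n^2)$.

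I do not expect any genuine obstacle at this step: the substantive work was carried out in Subsection \ref{ssec:characteristicTwo}, where the non-symmetric ideal $\pp$ was chosen precisely so that conditions \ref{it:framelikeDegreeZero} and \ref{it:framelikeSaturation} could both be attained after the symmetric choice $\mmy^2$ was ruled out in characteristic two. The only mild subtlety, compared with Lemma \ref{ref:abstraction:lem}, is that the pigeonhole bound now forces $b \geq n$ rather than $b \geq 1$, which explains the corresponding shift in the parenthetical conclusion of the statement.
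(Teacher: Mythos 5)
Your proposal matches the paper's proof: the paper cites Proposition~\ref{ref:TNTforJChar2:prop} and Corollary~\ref{ref:degreeZeroMainCharTwo:cor} for conditions~\ref{it:framelikeTNT} and~\ref{it:framelikeDegreeZero}, and notes $(y_1^2,\ldots,y_n^2)\supset\mmy^{n+1}$ for condition~\ref{it:framelikeSaturation}, which is exactly your pigeonhole observation (the paper does not even need the $y_1\mmy$ summand of $\pp$ for this step). One small remark: the implication $I_n=0\Rightarrow I_2=0$ has nothing to do with being ``generated in high degree'' — it holds for any homogeneous ideal simply because $\DS$ is a domain, so $0\neq f\in I_2$ would give $0\neq fx_1^{n-2}\in I_n$.
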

    \begin{proof}
        This follows from Proposition~\ref{ref:TNTforJChar2:prop},
        Corollary~\ref{ref:degreeZeroMainCharTwo:cor} and from $(y_1^2, \ldots
        ,y_n^2) \supset \mmy^{n+1}$.
    \end{proof}

    Let us fix a frame-like ideal $\DJJ$.
    The ideal $\DJJ$ is $\mathbb{N}^2$-graded by $(\deg \xx, \deg \yy)$ and hence its
    stabilizer contains a two-dimensional torus $\Gmult^{2}$. We consider three
    of its one-dimensional sub-tori: $\Gx, \Gy, \Gdiag$. They act on $\DT$ by
    respectively
    \[
        t\cdot (\xx, \yy) = \begin{cases}
            (t\xx, \yy) & \mbox{for }t\in \Gx\\
            (\xx, t\yy) & \mbox{for }t\in \Gy\\
            (t\xx, t\yy) & \mbox{for }t\in \Gdiag
        \end{cases}
    \]
    We identify $\kk$-points of $\HshortZZ$ with
    finite subschemes of affine space and with their ideals, with the
    conversion that $\DI \subset \DS$ and $\DJJ \subset \DT$, so for example
    $(\HshortZZ, [\DJJ])$ is the pointed scheme
    $(\OHilb(\mathbb{A}^{2n}_{\mathbb{Z}}), [\DJJ])$.
    Finally, we consider the following Diagram~\eqref{eq:maindiag} of Hilbert schemes, where subscripts indicate
    the points of interest (see below for explanations).
    \begin{equation}\label{eq:maindiag}
        \begin{tikzcd}
            && \Gadd^{2n} \times (\HplusZZ, [\DJJ]) \arrow[r,
            "\theta_{xy}"]\arrow[d, "\pi_{xy}"] & \HshortDR\\
            &G \times \HshortDRsemihom^{+, \Gy}\arrow[r,
            "\theta_{x}"]\arrow[d, "\pi_x"] & \HshortDRsemihom\arrow[u,
            "\fixedptemb_{xy}", bend left] &\\
            \Hflaghom\arrow[r, "\pr_2"]\arrow[d,
            "\pr_1"'] & \HshortDRhom\arrow[u, "\fixedptemb_{x}", bend left] & &\\
            \HshortDVhom &
        \end{tikzcd}
    \end{equation}

    The scheme $\HshortDRsemihom^{+,
    \Gy}$ is the \BBname{} decomposition of $\HshortDRsemihom$ with respect to
    the natural $\Gy$-action. The scheme $\HshortZZflag$ is
    the flag Hilbert scheme, which parameterizes deformations of
    pairs of finite subschemes $R_1 \subset R_2$ in affine space (the flag
    Hilbert scheme is constructed as a closed subscheme of $\HshortZZ \times \HshortZZ$).

The map $\theta_{xy}$ was denoted by $\theta$ in the introduction. It is the forgetful
map $\Gadd^{2n} \times \HplusZZ$ to $\Gadd^{2n} \times \HshortZZ$ composed
with
the translation action $\Gadd^{2n} \times \HshortZZ \to \HshortZZ$. Similarly,
the map
$\theta_x$ is the forgetful map followed by the $G$-action on
$\HshortZZ^{\Gdiag}$.

The proof of Proposition~\ref{ref:retractionFromFrame:prop} is a journey on
Diagram~\eqref{eq:maindiag}, from its upper-right corner to the lower-left
one. Specifically, each of the three parts of
the proposition asserts the existence of a local retraction for one ``hook''
on this diagram. First two
retractions will be obtained from the BB decompositions corresponding to
$(\theta_{xy}, \pi_{xy}, i_{xy})$ and $(\theta_x, \pi_x, i_x)$ respectively.
The last one is easily deduced from $(\pr_1, \pr_2)$. The
conditions~\ref{it:framelikeTNT},~\ref{it:framelikeDegreeZero} of the definition of
frame-like ideal imply that $d\theta_{xy}$ and $d\theta_x$ are
bijective in relevant points.
\begin{proposition}\label{ref:quadriclemma:prop}
    There exists an open $\Gdiag$-stable neighbourhood $[\DJJ]\in U_{xy}
    \subset \HplusZZ$ such that $(\theta_{xy})_{|\Gadd^{2n} \times
    U_{xy}}\colon \Gadd^{2n} \times U_{xy} \to
    \HshortZZ$ is an open immersion.
\end{proposition}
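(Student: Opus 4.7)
The plan is to reduce the proposition to two facts already assembled: the TNT property of $\DJJ$, and the general principle recalled after diagram~\eqref{eq:diagramHplus}, which says that at a $\Gmult$-fixed point with TNT the BB translation map $\theta$ is an open immersion in a neighbourhood (see \cite{Jelisiejew__Elementary}).

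First I would verify TNT for $\DJJ$: this is condition~\ref{it:framelikeTNT} of Definition~\ref{ref:framelike:def}, which holds by Lemma~\ref{ref:abstraction:lem} if $\charr\kk\neq 2$ and by Lemma~\ref{ref:abstractionCharTwo:lem} if $\charr\kk=2$. Concretely this asserts $\dim_{\kk}\Hom_\DT(\DJJ,\DT/\DJJ)_{<0}=2n$, and by Corollary~\ref{ref:verynegative:cor}, Corollary~\ref{ref:minusOneTangents:cor} and Lemma~\ref{ref:minusminusTangents:lem} (or their characteristic-two analogues) this negative part is spanned precisely by the translation derivations $\partial_{x_i},\partial_{y_i}$. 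Combined with the identification $T_{[\DJJ]}\HplusZZ\simeq \Hom_\DT(\DJJ,\DT/\DJJ)_{\geq 0}$, this shows that the tangent map $d\theta_{xy}$ at $(0,[\DJJ])$ is bijective onto $T_{[\DJJ]}\HshortZZ=\Hom_\DT(\DJJ,\DT/\DJJ)$.

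Next I would upgrade pointwise bijectivity to an open immersion. Standard BB arguments (or flatness of $\theta_{xy}$ together with unramifiedness at $(0,[\DJJ])$) yield étaleness on an open neighbourhood $W$ of $(0,[\DJJ])$ in $\Gadd^{2n}\times\HplusZZ$. Since $\theta_{xy}$ is injective on $\kk$-points by the remark following \eqref{eq:diagramHplus}, $\theta_{xy}|_W$ is a universally injective étale morphism, hence an open immersion. Using the $\Gadd^{2n}$-equivariance of $\theta_{xy}$ (translation on the first factor of the source corresponds to translation on the target), this open locus has the product form $W=\Gadd^{2n}\times V$ for some open $V\subset\HplusZZ$ containing $[\DJJ]$.

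Finally, to arrange $\Gdiag$-stability, I would invoke Sumihiro's theorem to find a $\Gdiag$-stable affine open neighbourhood $V'$ of $[\DJJ]$ (possible since $[\DJJ]$ is $\Gdiag$-fixed and $\HplusZZ$ is locally of finite type), and set $U_{xy}:=V\cap V'$. By equivariance of $\theta_{xy}$ with respect to the diagonal $\Gdiag$-action on source and target, the locus $V$ is itself $\Gdiag$-stable, so $U_{xy}$ is $\Gdiag$-stable open and $\theta_{xy}$ remains an open immersion on $\Gadd^{2n}\times U_{xy}$. The main obstacle I anticipate is the passage in the second paragraph from "$d\theta_{xy}$ is bijective at a single point" to "$\theta_{xy}$ is étale on a neighbourhood", which one must trace through the flatness properties of the BB construction rather than assume tangential bijectivity alone; the rest of the argument is essentially formal.
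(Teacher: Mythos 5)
Your overall architecture — verify TNT to get tangent bijectivity, upgrade to \'etaleness, use universal injectivity to get an open immersion, use equivariance to put the \'etale locus in product form and make it $\Gdiag$-stable — matches the paper's. You also correctly flag where the real content lies. But the flagged step is a genuine gap, not just a detail to be chased down: tangent-space bijectivity at the single point $(0,[\DJJ])$ does \emph{not} by itself give \'etaleness on a neighbourhood, and neither of your two suggested fallbacks closes it. There is no a priori reason that $\theta_{xy}$ is flat (flatness at the point is precisely what is at stake), and ``standard BB arguments'' do not by themselves give \'etaleness, since $\HplusZZ$ need not be a bundle over $\HshortZZ^{\Gmult}$ in the singular setting. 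The paper's actual input here is substantive: it cites \cite[Thm~4.2]{Jelisiejew__Elementary}, which says that $\theta_{xy}$ induces an \emph{injection on obstruction spaces} at $(0,[\DJJ])$. Combined with the bijection on tangent spaces coming from Condition~\ref{ref:framelike:def}\ref{it:framelikeTNT}, formal deformation theory then yields formal \'etaleness, hence \'etaleness for these finite-type schemes. Without the obstruction-space input your argument does not go through.

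Two smaller remarks. First, the paper packages the equivariance more efficiently: it observes that $\Gdiag$ normalizes $\Gadd^{2n}$, so $H=\Gdiag\ltimes\Gadd^{2n}$ acts on the source by $(t,v_1)\cdot(v_2,x)=(t(v_1+v_2)t^{-1},tx)$ and $\theta_{xy}$ is $H$-equivariant; the \'etale locus is therefore simultaneously of product form $\Gadd^{2n}\times U$ \emph{and} $\Gdiag$-stable in one step. Second, your invocation of Sumihiro is redundant given your own subsequent observation that $\Gdiag$-equivariance of $\theta_{xy}$ already forces $V$ to be $\Gdiag$-stable — if that is so, take $U_{xy}=V$ and skip Sumihiro entirely.
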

\begin{proof}
    The map $\theta_{xy}$ at $(0,[\DJJ])$ induces an injection on obstruction
    spaces~\cite[Thm~4.2]{Jelisiejew__Elementary} and a bijection on tangent
    spaces by Condition~\ref{ref:framelike:def}\ref{it:framelikeTNT}.
    Hence it is \'etale at this point.
    The torus $\Gdiag$ normalizes $\Gadd^{2n}$ in the automorphism scheme of
    $\HshortZZ$, hence we obtain a semidirect subgroup $H := \Gdiag \ltimes
    \Gadd^{2n}\subset \Aut(\HshortZZ)$. This subgroup acts on
    $\Gadd^{2n} \times \HplusZZ$ by $(t, v_1) \cdot (v_2, x) :=
    (t(v_1+v_2)t^{-1}, tx)$ and $\theta_{xy}$ is $H$-equivariant.
    Hence, the
    \'etale locus of $\theta_{xy}$ has the form $\Gadd^{2n} \times U$ for some $\Gmult$-stable open $U \subset
    \HshortDR^{+, \Gdiag}$. Now, $\theta_{xy}$ is universally injective, hence $(\theta_{xy})_{|U}$ is an open immersion,
    see~\cite[Tag~025G]{stacks_project}. Take $U_{xy} := U$.
\end{proof}

To repeat the argument above for $\theta_x$ we need
to check universal injectivity for the $G$-action.
\begin{lemma}\label{ref:Gstabs:lem}
    Let $[\DJJ]\in (\HshortZZ^{\Gdiag})^{+, \Gy}$ be a point with limit
    point $[\DJJ_0]\in \HshortZZ^{\Gx \times \Gy}$. Assume that $\mathg \to \Hom_{\DT}(\DJJ_0, \DT/\DJJ_0)$
    is injective. Then the stabilizer of $[\DJJ]$ in $G$ is trivial.
\end{lemma}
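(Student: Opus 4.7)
The plan is to show the stabilizer is trivial functorially: for any $\kk$-algebra $R$ and any $g_A \in \Stab_G([\DJJ])(R)$, where $A$ is an $R$-valued $n\times n$ matrix, we must force $A = 0$. The idea is to transport the vanishing of $\Lie \Stab_G([\DJJ_0])$ (which is the injectivity hypothesis) back to $[\DJJ]$ via a one-parameter family built from $\Gy$-conjugation.

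The BB+ point $[\DJJ]$ corresponds to a $\Gy$-equivariant morphism $\varphi\colon \Gmultbar \to \HshortZZ^{\Gdiag}$. Using the coordinate $s$ on $\Gmultbar \simeq \mathbb{A}^1_s$ with $s=0$ the compactifying point, equivariance forces $\varphi(0) = [\DJJ_0]$ and $\varphi(s) = s^{-1}\cdot [\DJJ]$ for $s \in \Gmult$. On the group side, since $\Gy$ scales only the $y_j$'s while $g_A$ sends $y_j \mapsto y_j + \sum_i A_{ij}x_i$ (and fixes the $x_i$'s), a direct calculation gives the conjugation formula $s^{-1}g_A s = g_{sA}$. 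So if $g_A$ stabilizes $[\DJJ]$, then $g_{sA}$ stabilizes $\varphi(s)$ for every $s \in \Gmult$. Define $h\colon \mathbb{A}^1_s \to G$ by $h(s) := g_{sA}$; this is linear in $s$ with $h(0) = e$. The identity $\alpha(h(s), \varphi(s)) = \varphi(s)$, where $\alpha\colon G\times \HshortZZ \to \HshortZZ$ is the action morphism, holds on the dense open $\Gmult \subset \mathbb{A}^1_s$, and by separatedness of $\HshortZZ$ extends to an identity of morphisms on all of $\mathbb{A}^1_s$.

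Differentiating this identity at $s=0$ is the key step. The differential of the action at $(e,[\DJJ_0])$ satisfies $d\alpha_{(e,[\DJJ_0])}(X, v) = X\cdot [\DJJ_0] + v$, where $X \mapsto X\cdot [\DJJ_0]$ denotes the orbit-map tangent $\mathg \to T_{[\DJJ_0]}\HshortZZ = \Hom_\DT(\DJJ_0, \DT/\DJJ_0)$. The chain rule then yields $dh|_0(\partial_s)\cdot [\DJJ_0] = 0$. Linearity of $h(s) = g_{sA}$ forces $dh|_0(\partial_s) = A$, so $A$ lies in the kernel of the orbit differential, which vanishes by hypothesis. Therefore $A = 0$ and $g_A = e$; the same argument works over any $R$, proving that $\Stab_G([\DJJ])$ is the trivial group scheme. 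The main technical hurdle I expect is lining up the $\Gy$-weight conventions so that the conjugate family $h(s) = g_{sA}$ genuinely extends to the identity at $s=0$ rather than blowing up (i.e., choosing the correct direction of conjugation consistent with the BB+ limit direction); once that bookkeeping and the formula $\varphi(s) = s^{-1}\cdot [\DJJ]$ are in place, the chain-rule differentiation is routine.
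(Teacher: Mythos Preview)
Your proof is correct and follows essentially the same route as the paper: conjugate a stabilizing element $g_A$ by the $\Gy$-action to obtain the family $g_{sA}$ stabilizing $s^{-1}\cdot[\DJJ]$, extend across $s=0$, and read off that $A$ lies in the kernel of $\mathg \to \Hom_{\DT}(\DJJ_0,\DT/\DJJ_0)$. The paper compresses the chain-rule step into the single sentence ``therefore the tangent vector $A\in\mathg$ maps to zero,'' while you spell out the differentiation and the functor-of-points argument explicitly, but the underlying idea is identical.
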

\begin{proof}
    Suppose that there exists a point of the stabilizer of $[\DJJ]$ and
    identify it with a matrix $A$ as in~\eqref{eq:Gdef}. For every $t\in
    \Gmultbar$ the element of $G$ corresponding to the matrix $t^{-1}A$
    stabilizes $t\cdot [\DJJ]$. Therefore, the tangent vector $A\in \mathg$
    maps to zero in $\Hom_{\DT}(\DJJ_0, \DT/\DJJ_0)$. Since the map is injective, $A =
    0$.
\end{proof}

\begin{proposition}\label{ref:planesOnQuadric:prop}
    There exists an open $\Gy$-stable neighbourhood $[\DJJ] \in U_y \subset
    \HshortZZ^{\Gdiag}$ such that $(\theta_{x})_{|G \times U_y}\colon G \times U_y
    \to \HshortZZ^{\Gdiag}$ is an open immersion.
\end{proposition}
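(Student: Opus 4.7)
The proof follows the template of Proposition~\ref{ref:quadriclemma:prop}: establish \'etaleness at $(e, [\DJJ])$, use equivariance to describe the shape of the \'etale locus, and then obtain universal injectivity by shrinking.

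For \'etaleness at $(e, [\DJJ])$, the tangent space of the source decomposes as $\mathg \oplus T_{[\DJJ]}\HshortDRsemihom^{+, \Gy}$. The \BBname{} tangent formula identifies the second summand with the non-negative $y$-degree piece of $T_{[\DJJ]}\HshortDRsemihom = \bigoplus_{\alpha}\Hom_\DT(\DJJ, \DT/\DJJ)_{(\alpha, -\alpha)}$, namely $\bigoplus_{\alpha \leq 0}\Hom_\DT(\DJJ, \DT/\DJJ)_{(\alpha, -\alpha)}$. Frame-likeness condition~\ref{ref:framelike:def}\ref{it:framelikeDegreeZero} identifies $\mathg$ bijectively with the complementary piece $\bigoplus_{\alpha \geq 1}\Hom_\DT(\DJJ, \DT/\DJJ)_{(\alpha, -\alpha)}$, so $d\theta_x$ is a bijection onto $T_{[\DJJ]}\HshortDRsemihom$. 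Obstruction injectivity is inherited from the \BBname{} forgetful map $\HshortDRsemihom^{+, \Gy} \to \HshortDRsemihom$ via~\cite[Thm~4.2]{Jelisiejew__Elementary}, since composition with the smooth $G$-action preserves this property.

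The equivariance argument proceeds as in the proof of Proposition~\ref{ref:quadriclemma:prop}: the torus $\Gy$ normalizes $G$ inside $\Aut(\HshortZZ)$, with conjugation by $t \in \Gy$ rescaling a matrix $A \in \Endlin$ representing an element of $G$ (see~\eqref{eq:Gdef}) to $t^{-1}A$. This assembles into a semidirect product $\Gy \ltimes G$ acting on both the source $G \times \HshortDRsemihom^{+, \Gy}$ and the target $\HshortDRsemihom$ with respect to which $\theta_x$ is equivariant. Hence the \'etale locus is $(\Gy \ltimes G)$-invariant and, containing $(e, [\DJJ])$, has the form $G \times U_y'$ for some $\Gy$-stable open $U_y' \ni [\DJJ]$.

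Finally, I shrink $U_y'$ to a $\Gy$-stable open $U_y \subset U_y'$ enjoying two further properties: every point of $U_y$ has trivial $G$-stabilizer, and distinct points of $U_y$ lie in distinct $G$-orbits inside $\HshortDRsemihom$. The first property holds on an open subset by Lemma~\ref{ref:Gstabs:lem} combined with upper semicontinuity of the dimension of the kernel of $\mathg \to \Hom_\DT(\DJJ_0, \DT/\DJJ_0)$. The second is obtained by shrinking further: the bijectivity of $d\theta_x$ at $(e, [\DJJ])$ implies that $U_y'$ is locally transverse to the $G$-orbit through $[\DJJ]$, so nearby points of $U_y'$ lie in distinct $G$-orbits. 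On the resulting $U_y$, $\theta_x|_{G \times U_y}$ is \'etale and injective on $\kk$-points for every field $\kk$, hence universally injective, and thus an open immersion by~\cite[Tag~025G]{stacks_project}. The main obstacle is the second property in this paragraph, i.e.\ the local slice property for the $G$-action: triviality of stabilizers alone does not suffice, and one must separate $G$-orbits via the transversality supplied by the tangent isomorphism, which has no analogue in the simpler translation-action setting of Proposition~\ref{ref:quadriclemma:prop}.
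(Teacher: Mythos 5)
Your \'etaleness computation at $(e, [\DJJ])$ and the $\Gy \ltimes G$-equivariance step are correct and match the paper; the decomposition of the source tangent space and the identification of the missing degrees $(\adx, -\adx)_{\adx\geq 1}$ with $\mathg$ via Definition~\ref{ref:framelike:def}\ref{it:framelikeDegreeZero} is exactly what the paper does. The gap is in the last step. Transversality of $U_y'$ to the $G$-orbit at $[\DJJ]$ is precisely the bijectivity of $d\theta_x$, hence it gives \'etaleness, which you already had; it does not by itself separate $G$-orbits. \'Etale maps from a space carrying a free $G$-action with trivial stabilizers can still fail to be injective (take $G=\Gadd$ translating $\mathbb{A}^1$ and $U$ a two-point ``slice'': $G\times U\to\mathbb{A}^1$ is \'etale, stabilizer-free and $2$-to-$1$). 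Moreover, since $U_y$ must remain $\Gy$-stable, any nonempty attracting set $\pi_x^{-1}(v)\cap U_y$ contains entire $\Gy$-orbits, so Zariski shrinking cannot confine you to the analytic neighbourhood of $[\DJJ]$ where ``nearby'' would be meaningful; the bad pairs you want to exclude, if they existed, would accumulate along $\Gy$-orbits.

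What actually does the work is the $\Gy$-conjugation contraction that appears in the proof of Lemma~\ref{ref:Gstabs:lem}, applied to the full injectivity statement rather than only to stabilizers. If $g\cdot\theta_0(u_1)=\theta_0(u_2)$ with $u_1,u_2\in U_y$, then $(tgt^{-1})\cdot\theta_0(tu_1)=\theta_0(tu_2)$ for all $t\in\Gy$; the matrix of $tgt^{-1}$ is $t^{-1}A$, which tends to $0$ as $t\to\infty$ in $\Gmultbar$, while $tu_i$ tends to the fixed point $i_x(\pi_x(u_i))$. The two $\Gmultbar$-equivariant arcs $t\mapsto(tgt^{-1},tu_1)$ and $t\mapsto(e,tu_2)$ in $G\times U_y$ then have the same image under $\theta_x$ and the same limit $(e,i_x(\pi_x(u_1)))$; formal injectivity of the \'etale map $\theta_x$ at this limit forces the two arcs to coincide, giving $g=e$ and $u_1=u_2$. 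This contraction mechanism, not a slice/transversality argument, is what the paper is invoking when it derives universal injectivity from Lemma~\ref{ref:Gstabs:lem}, and your proposal omits it.
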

\begin{proof}
    Let $\mathg$ be the Lie algebra of $G$.
    The tangent map $d(\theta_x)$ at $[\DJJ]$ is
    \[
        \mathg \oplus \bigoplus_{\adx\geq 0}\Hom(\DJJ, \DT/\DJJ)_{(-\adx,
        \adx)}\to \bigoplus_{\adx\in \mathbb{Z}}\Hom(\DJJ, \DT/\DJJ)_{(-\adx,
        \adx)},
    \]
    so it is bijective by
    Condition~\ref{ref:framelike:def}\ref{it:framelikeDegreeZero}.
    As in
    the proof of Proposition~\ref{ref:quadriclemma:prop}, we find a
    $\Gy$-stable $U_y$ such that $(\theta_x)_{|G \times U_y}$ is \'etale. Then
    by Lemma~\ref{ref:Gstabs:lem} the map $(\theta_x)_{|G \times U_y}$ is universally
    injective, hence an open immersion~\cite[Tag~025G]{stacks_project}.
\end{proof}

\begin{proposition}\label{ref:JToPair:prop}
    The map $\pr_2$ is an isomorphism on the connected component of
    $[\DI\DT+\mmxpow \subset \DJJ]\in \HshortZZflag^{\Gx \times \Gy}$.
\end{proposition}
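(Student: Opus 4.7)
The plan is to construct a canonical section of $\pr_2$, defined functorially on all $B$-points of $\HshortZZ^{\Gx\times\Gy}$, and then verify that on the chosen connected component of the source it is actually a two-sided inverse to $\pr_2$. The driving structural observation is that the ideal $\DI\DT+\mmxpow$ is generated in bidegree $(*,0)$, so it is determined by its $\yy$-degree zero part; consequently, completing any deformation of $\DJJ$ to a compatible flag is rigid.

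For the construction, given a flat $(\Gx\times\Gy)$-equivariant family $\DJJ_B\subset \DT_B := \DT\otimes B$ deforming $\DJJ$, I would set $K_B := (\DJJ_B)_{*,0}\subset \DS_B$ and define $L_B := K_B\cdot \DT_B$. Since the bigraded Hilbert function of $\DJJ_B$ is constant along the family, each $(K_B)_d$ is locally free over $B$ of rank $\dim_{\kk}(\DI+\mmxpow)_d$; since $K_B$ sits in $\yy$-degree zero, the piece $(L_B)_{d,e}$ equals $(K_B)_d\otimes_{\kk}\kk[\yy]_e$, which is $B$-locally free of the rank matching the bigraded Hilbert function of $\DI\DT+\mmxpow$. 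Hence $L_B$ is a $B$-flat deformation of $\DI\DT+\mmxpow$, and $L_B\subset \DJJ_B$ by construction, giving a flag. The assignment $\DJJ_B\mapsto (L_B,\DJJ_B)$ is functorial and defines a morphism $\sigma$ with $\pr_2\circ \sigma = \id$.

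For the reverse direction on the chosen component, take any flag $(L_B', \DJJ_B)$ over $B$ in that component. Then $L_B'$ is a $B$-flat bigraded deformation of $\DI\DT+\mmxpow$ with constant bigraded Hilbert function. A graded Nakayama argument, using that $\DI\DT+\mmxpow$ is generated in bidegree $(*,0)$, shows that $L_B'$ is also generated in bidegree $(*,0)$; equivalently $L_B' = (L_B')_{*,0}\cdot \DT_B$. The inclusion $L_B'\subset \DJJ_B$ yields $(L_B')_{*,0}\subset K_B$, both being $B$-projective of the same rank in each $\xx$-degree; since the quotient $K_B/(L_B')_{*,0}$ is $B$-flat of rank zero at the special fiber, it vanishes. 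Thus $L_B' = K_B\cdot \DT_B = L_B$, so $\sigma\circ \pr_2 = \id$ on the component.

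The main delicate point is to check that $L_B = K_B\cdot \DT_B$ is genuinely $B$-flat with the expected bigraded Hilbert function, which amounts to the observation that tensoring the $B$-projective module $K_B$ with the free $\kk$-module $\kk[\yy]$ preserves $B$-projectivity bidegree-by-bidegree; the remaining steps use only the standard graded Nakayama lifting of generators and a rank comparison inside each bigraded piece.
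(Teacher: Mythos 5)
Correct, and essentially the same approach as the paper's own proof: both identify the canonical first ideal of the flag as the extension to $\DT$ of the $\yy$-degree-zero slice of $\DJJ_B$, which bihomogeneity forces, and then check this is the unique compatible flag. One small remark: the appeal to graded Nakayama is a bit imprecise (a bigraded ideal with the Hilbert function of $\DI\DT+\mmxpow$ need not a priori be generated in bidegree $(*,0)$), but it is also unnecessary, since the degreewise comparison of the direct summands $(L_B)_{(d,e)}\subset (L_B')_{(d,e)}\subset (\DT_B)_{(d,e)}$, all with $B$-projective quotients of matching rank, already yields $L_B'=L_B$.
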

\begin{proof}
    \def\Ical{\widetilde{\DI}}%
    \def\Jcal{\widetilde{\DJJ}}%
    \def\gcal{\widetilde{g}}%
    \def\scal{\widetilde{s}}%
    This is an easy consequence of the bi-homogeneity of $\DJJ$.
    The map $(\DT/(\DI\DT + \mmxpow))_{(*, 0)} \to (\DT/\DJJ)_{(*, 0)}$ is bijective.
    Consider a base ring $A$ and an equivariant deformation $\DT_A/\Jcal$ of
    $\DT/\DJJ$ over $A$. Let $g_1, \ldots ,g_r$ be homogeneous generators of
    $\DI$ and $\gcal_1, \ldots , \gcal_r$ be their unique lifts in $\Jcal$.
    Let $\Ical := (\gcal_1, \ldots , \gcal_r)+\mmxpow$, then $(\DT_A/\Ical)_{(*,
    0)} \to (\DT_A/\Jcal)_{(*, 0)}$ is bijective as well.
    Take any syzygy $s\in \DT^r$ between $g_1, \ldots , g_r$. Since $\DT_A/\Jcal$ is
    $A$-flat, there is a syzygy $\scal\in \DT_A^r$ lifting $s$. This means that
    $\scal(\gcal_1, \ldots , \gcal_r) \in \Jcal$. We have $\deg(s) =
    \deg(\scal) = (*, 0)$, so $\scal(\gcal_1, \ldots ,
    \gcal_r) \in \Ical$. Hence,
    $[\Ical \subset \Jcal]$ is the required deformation
    of $[\DI\DT+\mmxpow \subset \DJJ]$. Uniqueness is evident.
\end{proof}

\begin{proposition}\label{ref:IToPair:prop}
    The projection $\pr_1$ is a retraction of
    the connected component of $[\DI\DT+\mmxpow \subset \DJJ]\in
    \HshortZZflag^{\Gx \times \Gy}$ to
    the connected component
    of $[\DI+\mmxpow]\in \HshortZZ^{\Gx}$.
\end{proposition}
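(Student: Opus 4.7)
The plan is to construct an explicit section $s$ of $\pr_1$ on the relevant connected components, functorial in the base, and then verify the defining properties. Given a $\Gx$-equivariant flat deformation $\mathcal{I}\subset\DS_A$ of $\DI+\mmxpow$ over an arbitrary base $A$, I would set
\[
    \mathcal{J} := \mathcal{I}\cdot\DT_A + \pp\cdot\DT_A + (Q) \subset \DT_A
\]
and define $s([\mathcal{I}]) := [\mathcal{I}\DT_A \subset \mathcal{J}]$. The formula is manifestly functorial, the ideal $\mathcal{J}$ is $\Gx\times\Gy$-bi-homogeneous because each of its three constituents is, the flag compatibility $\mathcal{I}\DT_A \subset \mathcal{J}$ is built in, and $\pr_1\circ s = \id$ holds since $\pr_1$ remembers only the first component. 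The uniqueness statement in Proposition~\ref{ref:JToPair:prop} will guarantee that $s$ lands in the appropriate connected component.

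The real content is verifying that $\DT_A/\mathcal{J}$ is $A$-flat, so that $s$ indeed factors through the flag Hilbert scheme. I would exploit the bi-grading: it suffices to show that each bi-graded piece $(\DT_A/\mathcal{J})_{(p,q)}$ is $A$-locally free of the same rank as $(\DT/\DJJ)_{(p,q)}$. The containment $\mathcal{I}\supset\mmxpow$ forces vanishing for $p\ge a+1$, while Definition~\ref{ref:framelike:def}\ref{it:framelikeSaturation} combined with $\pp\supset\DT_{2,\ge b+1}$ forces vanishing outside a bounded range in $q$, so only a finite rectangle of bi-degrees matters. Inside this rectangle the generators of $\mathcal{J}$ assemble into
\[
    (\mathcal{J})_{(p,q)} = \mathcal{I}_p\cdot A[\yy]_q + \DS_{A,p}\cdot \pp_q + Q\cdot(\DT_A)_{(p-1,q-1)},
\]
whose summands depend on the deformation only through $\mathcal{I}_p$.

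The main obstacle is showing that these cokernel ranks do not jump over $\Spec A$. The cleanest strategy is to factor the construction in two steps. First observe that
\[
    \DT_A/(\mathcal{I}\DT_A + \pp\DT_A) \cong (\DS_A/\mathcal{I})\otimes_\kk (\kk[\yy]/\pp)
\]
is $A$-flat, being the tensor product over $\kk$ of the $A$-flat $\DS_A/\mathcal{I}$ with the fixed $\kk$-module $\kk[\yy]/\pp$. Second, analyse the quotient by $(Q)$ one bi-degree at a time: the multiplication-by-$Q$ map has the universal form $\sum x_iy_i$, whose effect on each bi-graded piece is determined by a formula that specialises to the central-fibre formula after base change along $A\to\kk$; combined with the $A$-local freeness of each $(\DS_A/\mathcal{I})_p$, this yields that the bi-graded pieces of $\DT_A/\mathcal{J}$ have the correct constant rank. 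Once flatness is secured, $s$ is a well-defined morphism of schemes and the identity $\pr_1\circ s = \id$ concludes the proof that $\pr_1$ is a retraction.
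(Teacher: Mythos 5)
Your overall strategy coincides with the paper's: you define the section by $\mathcal{J} := \mathcal{I}\cdot\DT_A + \pp\cdot\DT_A + (Q)$ and reduce flatness of $\DT_A/\mathcal{J}$ to constancy of the fibrewise ranks of multiplication by $Q$, bi-degree by bi-degree. You also correctly isolate the crux of the matter: as you say, ``the main obstacle is showing that these cokernel ranks do not jump over $\Spec A$.'' The trouble is that the sentence you offer as a resolution of that obstacle does not actually resolve it. Saying that the map $Q$ ``specialises to the central-fibre formula'' and that the source and target are $A$-locally free is not enough: the cokernel of a map of locally free $A$-modules can easily fail to be flat (take $A=\kk[t]$, $Q$ the multiplication by $t$ on $A$), and nothing in your argument rules this out.

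What is missing is exactly the ingredient the paper's proof invokes first: Lemma~\ref{ref:Qregular:lem}, which says that $Q$ is a non-zero-divisor on $\DT/\DI\cdot\DT$ (a depth statement that relies on $\depth(\DS_+,\DS/\DI)\geq 3$). In the bi-degrees that contribute nontrivially, this means multiplication by $Q$ is \emph{injective} on the central fibre, i.e.\ already of maximal possible rank there. Since the rank of a map of locally free modules is lower semicontinuous and cannot exceed the source rank, injectivity at the closed point forces the rank to be constant on a neighbourhood, and only then does your bi-degree-by-bi-degree cokernel computation yield flatness. Without this non-zero-divisor input — which is genuinely a fact about $\DI$, not a formal base-change observation — the cokernel rank could jump and the section $s$ would not factor through the flag Hilbert scheme. (The other hypothesis the paper uses, $\DT_{2,b+1}\subset\pp$ together with $\DI_b=0$ from Definition~\ref{ref:framelike:def}\ref{it:framelikeSaturation}, handles the boundary bi-degrees, which you gesture at but do not pin down precisely.) So the architecture of your proof is right, but the one step you flagged as the real content is left as a gap.
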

\begin{proof}
    The element $Q$ is a non-zero-divisor on $\DT/\DI\cdot \DT$ by
    Lemma~\ref{ref:Qregular:lem}. Moreover, $\DT_{2, b} \subset \pp$ by
    Condition~\ref{ref:framelike:def}\ref{it:framelikeSaturation}, thus every
    of $\DI + (x_1, \ldots ,x_n)^{a+1}\subset \DS$ induces a deformation
    of $\DJJ$ by keeping $Q$ and $\pp$ fixed. This gives the required section
    of $\pr_1$.
\end{proof}

Now, we prove the following abstract version of
Proposition~\ref{ref:retractionFromFrame:prop} from the introduction.
\begin{proposition}\label{ref:retractionFromFrameLike:prop}
    Let as before, $\DJJ$ be a frame-like ideal
    \begin{enumerate}
        \item\label{it:firstRetractionFrameLike} the scheme $(\HshortZZ, [\DJJ])$ locally retracts
            to $(\HshortZZ^{\Gdiag}, [\DJJ])$,
        \item\label{it:secondRetractionFrameLike} the scheme $(\HshortZZ^{\Gdiag}, [\DJJ])$ locally retracts to
            $(\HshortZZ^{\Gx \times \Gy}, [\DJJ])$,
        \item\label{it:thirdRetractionFrameLike} $(\HshortZZ^{\Gx \times \Gy}, [\DJJ])$
            locally retracts to $(\OHilb^{\Gx}(\mathbb{A}^n), [\DI + (x_1, \ldots
            ,x_n)^{a+1}])$.
    \end{enumerate}
\end{proposition}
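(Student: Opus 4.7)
The plan is to read off each of the three local retractions from Diagram~\eqref{eq:maindiag}, combining the open-immersion statements of Propositions~\ref{ref:quadriclemma:prop} and~\ref{ref:planesOnQuadric:prop} with the canonical retractions built into \BBname{} decompositions and with Propositions~\ref{ref:JToPair:prop}--\ref{ref:IToPair:prop}. All the analytic content is already packaged into the frame-like hypotheses via Lemmas~\ref{ref:abstraction:lem}, \ref{ref:abstractionCharTwo:lem}, so the remaining work is bookkeeping about open neighbourhoods.

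For Part~\ref{it:firstRetractionFrameLike}, I would use the $\Gdiag$-\BBname{} decomposition $(\theta_0, \pi_{xy}, \fixedptemb_{xy})$. Proposition~\ref{ref:quadriclemma:prop} supplies an open $\Gdiag$-stable $U_{xy}\subset \HplusZZ$ containing $[\DJJ]$ such that $(\theta_{xy})_{|\Gadd^{2n}\times U_{xy}}$ is an open immersion; let $U$ denote its image in $\HshortZZ$. Shrinking $U_{xy}$ if needed, pick an open neighbourhood $V\subset \HshortZZ^{\Gdiag}$ of $[\DJJ]$ with $\fixedptemb_{xy}(V)\subset U_{xy}$ and $\pi_{xy}^{-1}(V)\subset U_{xy}$. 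Then the retraction $(U,[\DJJ])\to (V,[\DJJ])$ is the composition $\theta_{xy}^{-1}$, $\pr_2$, $\pi_{xy}$, with section $\theta_{xy}\circ(0\times\id)\circ \fixedptemb_{xy}$. The identity $\pi_{xy}\circ \fixedptemb_{xy}=\id$ from the \BBname{} setup verifies the retraction property.

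Part~\ref{it:secondRetractionFrameLike} is the same argument one floor down: replace $\HshortZZ$ by $\HshortZZ^{\Gdiag}$, the torus $\Gdiag$ by $\Gy$, the group $\Gadd^{2n}$ by $G$, and the triple $(\theta_{xy},\pi_{xy},\fixedptemb_{xy})$ by $(\theta_x,\pi_x,\fixedptemb_x)$. Proposition~\ref{ref:planesOnQuadric:prop} makes $\theta_x$ an open immersion on $G\times U_y$ for some open $\Gy$-stable $U_y$ containing $[\DJJ]$, and the $\Gy$-\BBname{} retraction $\pi_x$ onto $\HshortZZ^{\Gx\times\Gy}$ then yields a local retraction in exactly the same way.

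For Part~\ref{it:thirdRetractionFrameLike}, Proposition~\ref{ref:JToPair:prop} states that $\pr_2$ is an isomorphism on the connected component containing $[\DI\DT+\mmxpow\subset\DJJ]$, so it has a local inverse $\pr_2^{-1}$ on an open neighbourhood of $[\DJJ]\in\HshortZZ^{\Gx\times\Gy}$. Proposition~\ref{ref:IToPair:prop} provides a section $s\colon \HshortZZ^{\Gx} \to \HshortZZflag^{\Gx\times\Gy}$ of $\pr_1$ on the relevant components. Then $\pr_1\circ \pr_2^{-1}$ is the retraction onto $(\OHilb^{\Gx}(\mathbb{A}^n),[\DI+\mmxpow])$, with section $\pr_2\circ s$; the identity $(\pr_1\circ \pr_2^{-1})\circ(\pr_2\circ s)=\pr_1\circ s=\id$ is immediate.

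No conceptual obstacle remains beyond keeping track of the opens: the only point requiring care is shrinking neighbourhoods so that the compositions above are defined and land in opens of the correct pointed target. The real work — TNT for $\DJJ$, the degree-zero tangent computation, universal injectivity of $\theta_x$ via Lemma~\ref{ref:Gstabs:lem}, and bi-homogeneity for the flag Hilbert scheme comparison — has already been carried out in the preceding sections.
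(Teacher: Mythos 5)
Your proposal is correct and follows the paper's approach: the paper's own proof is a one-line reference to Propositions~\ref{ref:quadriclemma:prop}, \ref{ref:planesOnQuadric:prop}, and \ref{ref:JToPair:prop}--\ref{ref:IToPair:prop}, and you have simply unpacked the standard open-neighbourhood bookkeeping that converts the open-immersion statements together with the \BBname{} retractions $(\pi,\fixedptemb)$ into local retractions. One point worth making explicit: the containment $\pi_{xy}^{-1}(V)\subset U_{xy}$ you ask for in Part~\ref{it:firstRetractionFrameLike} is not automatic for arbitrary opens, but does hold once one sets $V:=\fixedptemb_{xy}^{-1}(U_{xy})$ and uses that $U_{xy}$ is $\Gdiag$-stable (so its closed complement $C$ is $\Gdiag$-stable, hence $\pi_{xy}(C)\subset C$, hence $\pi_{xy}^{-1}(V)\cap C=\emptyset$); the $\Gdiag$-stability guaranteed by Proposition~\ref{ref:quadriclemma:prop} (and the $\Gy$-stability in Proposition~\ref{ref:planesOnQuadric:prop}) is exactly what makes this work.
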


\begin{proof}
    The existence of the local
    retractions~\eqref{it:firstRetractionFrameLike},~\eqref{it:secondRetractionFrameLike},~\eqref{it:thirdRetractionFrameLike}
    is proven respectively in
    Proposition~\ref{ref:quadriclemma:prop},
    Proposition~\ref{ref:planesOnQuadric:prop},
    Propositions~\ref{ref:JToPair:prop}-\ref{ref:IToPair:prop}.
\end{proof}

\begin{proof}[Proof of Theorem~\ref{ref:mainthm:thm}]

    Fix a singularity type $\singula$ and let $n=8$. By \cite[Proposition~4.4]{Vakil_MurphyLaw} there
    exists a field $\kk$ and a smooth general type surface $Z$ over $\kk$ and an embedding $Z \into
    \mathbb{P}^4_{\mathbb{Z}}$ such that
    $[Z]\in \OHilb(\mathbb{P}^4_{\mathbb{Z}})$ has singularity type $\singula$. Let $\DS_0 :=
    \bigoplus_i H^0(\mathbb{P}^4, \OO(i)) = \kk[x_1, \ldots ,x_{5}]$. Let
    $M\gg 0$, in particular $M > n$, and let
    $\DI_0 := I(Z)_{\geq M} \subset \DS_0$, so that $[I_0]\in
    \OHilb^{\Gmult}(\mathbb{A}^{5})$ has singularity type $\singula$,
    see~\cite[Lemma~4.1]{Haiman_Sturmfels__multigraded}.
    Let $\DS = \DS_0[x_{6}, x_{7}, x_{8}]$ be a polynomial ring over
    $\DS_0$ and let $\DI = I_0\cdot \DS$.
    Let
    \begin{equation}\label{eq:tangentToI}
        \tanspace_{I} := \Hom_{\DS}(\DI, \DS/\DI).
    \end{equation}

    Fix an action of $\Gres = \Gmult$ on $\DS$ acting with weight one on
    coordinates $x_6, x_7, x_8$ and fixing $\DS_0$. Since $\DI$ is generated by
    elements of $\DS_0$, we have $(\tanspace_I) = (\tanspace_I)_{\geq 0}$ with respect to
    the grading induced by $\Gres$.
    Let $\Hshort = \OHilb^{\Gmult}(\mathbb{A}^{8}_{\mathbb{Z}})$ and let $\theta\colon \Hplus \to
    \Hshort$ be its \BBname{} decomposition with respect to the $\Gres$-action.
    Since $\tanspace_I$ is non-negatively graded, the map $\theta$ is an open
    immersion near $[I]$. Hence, on an neighbourhood $U$ of $[I]\in \Hshort$ there is a
    retraction
    \begin{equation}\label{eq:smallretraction}
        U \to \Hshort^{\Gres}.
    \end{equation}
    But a neighbourhood of $[I]\in \Hshort^{\Gres}$ is canonically isomorphic
    to a neighbourhood of $[I_0]\in
    \OHilb^{\Gmult}(\mathbb{A}^{5})$. Hence,
    $(\OHilb^{\Gmult}(\mathbb{A}^{5}), [I_0])$ is a local retract of
    $(\OHilb^{\Gmult}(\mathbb{A}^{8}), [I])$.  We note that
    \begin{enumerate}
        \item $I_n = 0$,
        \item $\depth(\DS_{+}, \DS/\DI) \geq 3$, because $x_6, x_7, x_8$ form a regular sequence.
    \end{enumerate}
    We fix  $a\geq \reg(\DI)+1$, where $\DI$ is the regularity of ideal $\DI$.
    Then by~\cite[Proposition~3.1]{erman_Murphys_law_for_punctual_Hilb} the component of
    $H$ containing $[\DI + \mmxpow]$ is isomorphic to the component of $H$
    containing $[\DI]$ (here the choice of large enough $a$ is crucial).

    If $\charr \kk \neq 2$, then let $\DJJ$ be a \TNTframe{} for $\DI$ of size
    $a$. If $\charr \kk = 2$, then let $\DJJ$ be a tweaked frame for $\DI$. By
    Lemma~\ref{ref:abstraction:lem} or Lemma~\ref{ref:abstractionCharTwo:lem},
    the ideal $\DJJ$ is $n$-frame-like.

    By Proposition~\ref{ref:retractionFromFrameLike:prop} we obtain a retraction from an open neighbourhood of $[\DJJ]$ in
    $\HshortDR$ to a neighbourhood of $[\DI+\mmxpow]$ in $H := \OHilb^{\Gmult}\left( \mathbb{A}^8
    \right)$, which is isomorphic to a neighbourhood of $[\DI]$.
    Composing with~\eqref{eq:smallretraction}, we obtain the desired
    retraction.
\end{proof}
\begin{remark}
    Erman~\cite{erman_Murphys_law_for_punctual_Hilb} proved that
    $\coprod_{n} \OHilb^{\Gmult}(\mathbb{A}^n)$ satisfies Murphy's Law by a different
    reduction from $Z$ to $I(Z)$ using a sufficiently positive embedding of
    $Z$. His method is not
    applicable here, since the obtained $I(Z)$ is generated by quadrics and we require $I_2
    = 0$.
\end{remark}

\section{Corollaries of Theorem~\ref{ref:mainthm:thm} and
examples}\label{sec:after}

\begin{corollary}[Answer to Question~\ref{que:nonreducedness}]
    There are non-reduced points on the schemes $\HshortZZ$ and on
    $\HshortK$, where $\KK$ is any field.
\end{corollary}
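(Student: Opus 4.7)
The plan is to deduce the corollary directly from Theorem~\ref{ref:mainthm:thm}, specialized to non-reduced singularity types of the appropriate residue field characteristic, and then to propagate non-reducedness across a flat base change to $\KK$.

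For the non-reducedness of $\HshortZZ$, I would apply Theorem~\ref{ref:mainthm:thm} to the singularity type $\singula = [\Spec(\mathbb{Z}[u]/u^2), (u)]$, exactly as suggested in the introduction. A natural representative is $(Y, y) = (\Spec(\mathbb{Z}[u]/u^2), (u))$, whose local ring at $y$ is $\mathbb{Q}[u]/u^2$ and contains the nonzero nilpotent $u$. By Theorem~\ref{ref:mainthm:thm}, together with the infinitesimal consequence spelled out in the introduction, there exist a representative $(Y', y')$ of $\singula$ and a point $[R] \in \Hilbsixteen$ such that $\OOhat_{Y', y'}$ injects into $\OOhat_{\HshortZZ, [R]}$ as a subring. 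Smooth equivalence of pointed schemes preserves non-reducedness of the local ring at the marked point (smooth morphisms are faithfully flat of finite type, hence preserve nonzero elements in both directions), so $\OOhat_{Y', y'}$ is also non-reduced, and its nilpotent survives as a nilpotent in $\OOhat_{\HshortZZ, [R]}$, making $[R]$ the desired non-reduced point.

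For $\HshortK$ with $\KK$ of characteristic $p \geq 0$, I would match the characteristic by varying the singularity type: use $\singula$ itself when $p=0$, and $\singula_p := [\Spec(\mathbb{F}_p[u]/u^2), (u)]$ when $p>0$. Both are finite type over $\mathbb{Z}$, hence legitimate singularity types, and every representative has residue field of characteristic $p$ at the marked point, because smooth morphisms of finite type preserve the characteristic of residue fields. Applying Theorem~\ref{ref:mainthm:thm} yields a non-reduced point $[R] \in \Hilbsixteen$ whose image in $\Spec(\mathbb{Z})$ is the prime $(p)$. Since $\Spec(\KK) \to \Spec(\mathbb{Z})$ also factors through $(p)$, the fiber of $\HshortK = \HshortZZ \times_{\mathbb{Z}} \KK$ over $[R]$ is the spectrum of $\kappa([R]) \otimes_{\mathbb{F}_p} \KK$ (respectively $\kappa([R]) \otimes_{\mathbb{Q}} \KK$ when $p=0$), which is nonempty as a tensor product of field extensions of a common subfield. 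Any point $[R'] \in \HshortK$ above $[R]$ has local ring flat over $\mathcal{O}_{\HshortZZ, [R]}$, so the nonzero nilpotent persists and $[R']$ is a non-reduced point of $\HshortK$.

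The bulk of the work, Theorem~\ref{ref:mainthm:thm}, is already complete; the rest is essentially formal. The two verifications I would need to record are (i) that smooth equivalence of pointed schemes preserves the property ``the local ring at the marked point is non-reduced'' and (ii) that non-reducedness survives the flat base change from $\HshortZZ$ to $\HshortK$. Both follow immediately from faithful flatness, so I anticipate no real difficulty beyond this bookkeeping.
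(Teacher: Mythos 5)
Your proposal is correct and follows essentially the same route as the paper: specialize Theorem~\ref{ref:mainthm:thm} to the singularity types $[\Spec(\mathbb{Z}[u]/u^2)]$ and $[\Spec(\mathbb{F}_p[u]/u^2)]$, transfer the nilpotent into the local ring of the Hilbert scheme via the (local) retraction, and base change to the given field $\KK$. Two small refinements worth making. First, the paper works with $\mathcal{O}_{X,x}$ directly: the retraction's section makes $\OO_{Y,y}\to\OO_{X,x}$ split injective, so the nilpotent survives without passing through complete local rings; your detour through $\widehat{\OO}$ also works, but then the step ``$\widehat{\OO}_{\HshortZZ,[R]}$ non-reduced $\Rightarrow$ $\OO_{\HshortZZ,[R]}$ non-reduced'' silently uses excellence of finite-type $\mathbb{Z}$-schemes. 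Second, your parenthetical justification that smooth maps ``preserve nonzero elements in both directions'' by faithful flatness is not quite right: faithful flatness of a local homomorphism $A\to B$ only shows that a nonzero nilpotent of $A$ stays nonzero in $B$. The reverse implication (a nilpotent in $B$ forces one in $A$) is false for general faithfully flat local maps (e.g.\ $k[t]_{(t)}\to k[t,\epsilon]_{(t,\epsilon)}/(\epsilon^2)$); for smooth maps it holds because smooth morphisms have geometrically regular fibers, so a reduced base yields a reduced source. The paper sidesteps this by noting that $\mathbb{Z}$-flatness and non-reducedness of $\OO_{Y,y}$ are both preserved under smooth equivalence and together force $\mathbb{Z}[v]/v^2\subset\OO_{Y,y}$.
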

\begin{proof}
    For every pointed scheme $(Y, y)$ in the singularity type of
    $[\Spec(\mathbb{Z}[u]/u^2), V(u)]$ the ring $\OO_{Y, y}$ is
    $\mathbb{Z}$-flat but not reduced, and
    hence contains an element $v$ such that $\mathbb{Z}[v]/v^2 \subset
    \OO_{Y, y}$. As in
    Theorem~\ref{ref:mainthm:thm}, suppose that $(Y, y)$ is such
    a scheme with a retraction $(X, x)\to (Y, y)$ from an open subscheme $X$
    of $\HshortZZ$. Then the
    pullback $\OO_{Y, y} \to \OO_{X,x}$ has a section, hence is an injective
    homomorphism. In particular, $\OO_{X, x}$ is non-reduced as well.
    This proves the claim for $\HshortZZ$.
    The injective homomorphisms
    \[
        \mathbb{Z}[v]/v^2 \into \OO_{Y, y} \into \OO_{X, x}
    \]
    stay injective under $(-)\tensor_{\mathbb{Z}} \mathbb{Q}$, hence the claim
    follows for $\HshortQQ$.
    To prove the claim for $\HshortFp$, we argue as above for the singularity
    $[\Spec(\mathbb{F}_p[u]/(u^2))]$. The claim for arbitrary field $\KK$ now follows
    from base change.
\end{proof}
\begin{corollary}[Answer to Question~\ref{que:charp}]\label{ref:charp:cor}
    The scheme $\HshortZZ\to \Spec(\mathbb{Z})$ has components lying entirely
    in the fiber over $\Spec(\mathbb{Z}/p)$ for all primes $p$.
\end{corollary}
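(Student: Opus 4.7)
The plan is to apply Theorem~\ref{ref:mainthm:thm} to the singularity type $\singula = [\Spec(\mathbb{Z}/p), 0]$, in exact parallel to the preceding corollary. This yields a representative $(Y, y)$ of $\singula$, an open subscheme $X \subset \HshortZZ$, and a point $x \in X$ together with a retraction $(X, x) \to (Y, y)$. The goal is then to show that the irreducible component of $\HshortZZ$ passing through $x$ lies entirely over the closed point $(p) \in \Spec(\mathbb{Z})$.

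First I would verify that $p = 0$ in the local ring $\OO_{Y, y}$. By smooth equivalence there exists a pointed scheme $(Z, z)$ equipped with smooth maps to both $(\Spec(\mathbb{Z}/p), 0)$ and $(Y, y)$. Smoothness of $(Z, z) \to (\Spec(\mathbb{Z}/p), 0)$ forces $p = 0$ in $\OO_{Z, z}$, while the smooth map $(Z, z) \to (Y, y)$ induces a local flat homomorphism $\OO_{Y, y} \to \OO_{Z, z}$ which is faithfully flat, and hence injective. Thus $p = 0$ in $\OO_{Y, y}$. The retraction $(X, x) \to (Y, y)$ then has a section, which induces a section of the pullback $\OO_{Y, y} \to \OO_{X, x}$ on local rings, making the pullback injective. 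Therefore $p = 0$ in $\OO_{X, x}$, so there is an open neighborhood $V \subset X \subset \HshortZZ$ of $x$ on which $p$ vanishes identically; that is, $V$ is a scheme over $\mathbb{F}_p$.

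To conclude, let $C$ be an irreducible component of $\HshortZZ$ containing $x$, with generic point $\eta$. Since $\eta$ lies in every non-empty open subset of $C$, in particular $\eta \in C \cap V$, so $p$ vanishes in $\OO_{\HshortZZ, \eta}$. Therefore the morphism $C \to \Spec(\mathbb{Z})$ factors through $\Spec(\mathbb{F}_p)$, i.e.\ $C$ lies entirely in the corresponding fiber. Repeating the argument for each prime $p$ yields the full statement. The only step requiring any genuine thought is the deduction of the characteristic-$p$ property for $\OO_{Y, y}$ from smooth equivalence with $\Spec(\mathbb{Z}/p)$, which rests on the standard fact that a flat local homomorphism of local rings is automatically faithfully flat, hence injective; otherwise all substantive content is encapsulated in Theorem~\ref{ref:mainthm:thm}.
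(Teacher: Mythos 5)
Your proof is correct and follows essentially the same route as the paper: reduce to showing $p\OO_{Y,y}=0$ for any $(Y,y)$ in the singularity type of $[\Spec(\mathbb{Z}/p)]$, propagate this through the retraction to get $p\OO_{X,x}=0$, and observe that this forces the whole component through $x$ to lie over $(p)$. One small remark: once $p=0$ in $\OO_{Y,y}$, the conclusion $p=0$ in $\OO_{X,x}$ follows from the pullback map being a ring homomorphism (so $1\mapsto 1$ and hence $p\mapsto p$); the injectivity you deduce from the section is needed in the other direction (from $\OO_{Z,z}$ back to $\OO_{Y,y}$, via faithful flatness of the smooth map) but is superfluous here.
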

\begin{proof}
    For every pointed scheme $(Y, y)$ in the singularity type of
    $[\Spec(\mathbb{Z}/p)]$ we have $p\OO_{Y, y} = 0$. As in
    Theorem~\ref{ref:mainthm:thm}, suppose that $(Y, y)$ is such
    a scheme with a retraction $(X, x)\to (Y, y)$ from an open subscheme $X$
    of $\HshortZZ$. Then the pullback map $\OO_{Y, y} \to \OO_{X, x}$ implies
    that $p\OO_{X, x} = 0$, hence each component containing $x$ lies entirely in
    characteristic $p$.
\end{proof}

We can extend Corollary~\ref{ref:charp:cor} by
considering higher infinitesimal neighbourhoods.
\begin{corollary}\label{ref:liftingPathologies:cor}
    For every prime $p$ and every ${\nu}\geq 0$ there exists a finite field $\kk$ and
    a finite irreducible $\mathbb{F}_p$-scheme $R$ with residue field $\kk$
    which lifts to $\mathbb{Z}/p^{\nu}$ but not to any ring $A$ with $p^{{\nu}}A\neq 0$.
    In particular $R$ does not lift to $\mathbb{Z}/p^{{\nu}+1}$.
\end{corollary}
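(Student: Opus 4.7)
The plan is to apply Theorem~\ref{ref:mainthm:thm} to the singularity type $\singula_\nu := [\Spec(\mathbb{Z}/p^\nu), (p)]$ (for $\nu = 0$ the claim is vacuous, so assume $\nu \geq 1$). This yields a representative $(Y, y)$ of $\singula_\nu$, an open subscheme $(X, x) \subset \Hilbsixteen$, and a retraction $(f, s)\colon (X, x) \to (Y, y)$ with $f \circ s = \id_Y$. Set $\kk := \kappa(x) = \kappa(y)$ and let $R$ be the finite scheme over $\kk$ corresponding to $x$. Inspection of the construction in the proof of Theorem~\ref{ref:mainthm:thm} shows that the ideal cutting out $R$ contains a power of the coordinate maximal ideal, so $R$ is local, hence irreducible; once $\kk$ is shown to have characteristic $p$, it is automatically a finite field, since it is a field of finite type over $\mathbb{Z}$.

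Next, by Lemma~\ref{ref:equivRel:lem} there is a common smooth cover $(W, w)$ with smooth morphisms $\alpha\colon (W, w) \to (\Spec(\mathbb{Z}/p^\nu), (p))$ and $\beta\colon (W, w)\to (Y, y)$; one can choose $w$ to be a $\kk$-rational point of the $\kk$-smooth fiber $\beta^{-1}(y)$, so that $\kappa(w) = \kk$. Smoothness of $\alpha$ gives $p^\nu = 0$ in $\OO_{W,w}$, and injectivity of the faithfully flat map $\beta^*\colon \OO_{Y,y} \to \OO_{W,w}$ propagates this to $p^\nu = 0$ in $\OO_{Y,y}$. Non-liftability is now immediate: any lift of $R$ to a local ring $A$ is a local homomorphism $\varphi\colon \OO_{X,x}\to A$, and the composite $\varphi \circ f^*\colon \OO_{Y,y}\to A$ forces $p^\nu A = 0$. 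In particular $R$ does not lift to $\mathbb{Z}/p^{\nu+1}$, or to any $A$ with $p^\nu A \neq 0$.

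For the positive liftability to $\mathbb{Z}/p^\nu$, the smooth morphism $\alpha$ makes $\OO_{W,w}$ formally smooth over $\mathbb{Z}/p^\nu$. Since the surjection $W(\kk)/p^\nu \onto \kk$ has nilpotent kernel $(p)$, the residue quotient $\OO_{W,w} \onto \kk$ lifts through it to a $\mathbb{Z}/p^\nu$-algebra homomorphism $\OO_{W,w} \to W(\kk)/p^\nu$. Composing with $\beta^*$ and then with the section $s^*\colon \OO_{X,x}\to \OO_{Y,y}$ produces a local homomorphism $\OO_{X,x} \to W(\kk)/p^\nu$, which corresponds to a finite flat $W(\kk)/p^\nu$-algebra $\tilde R$ lifting $R$. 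Because $W(\kk)/p^\nu$ is free of rank $[\kk : \mathbb{F}_p]$ over $\mathbb{Z}/p^\nu$, restriction of scalars turns $\tilde R$ into a finite flat $\mathbb{Z}/p^\nu$-algebra with $\tilde R \tensor_{\mathbb{Z}/p^\nu} \mathbb{F}_p \cong R$, giving the required lift.

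The main delicate point is the residue field mismatch between the base ring $\mathbb{Z}/p^\nu$ (residue field $\mathbb{F}_p$) and the residue field $\kk$ of $R$: the formal smoothness argument naturally produces a lift over $W(\kk)/p^\nu$ rather than $\mathbb{Z}/p^\nu$ directly, and one must observe that restriction of scalars along the free inclusion $\mathbb{Z}/p^\nu \into W(\kk)/p^\nu$ preserves flatness and the reduction mod $p$. Beyond this, the proof is routine bookkeeping with smooth equivalence and formal smoothness.
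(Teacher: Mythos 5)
Your approach matches the paper's, and your treatment of the residue-field mismatch — lifting over $W(\kk)/p^\nu$ and then restricting scalars along the finite free extension $\mathbb{Z}/p^\nu\into W(\kk)/p^\nu$ — carefully fills in a detail that the paper's terse "we have ... a morphism $\varphi\colon\Spec(\mathbb{Z}/p^\nu)\to Y$" leaves implicit.

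There is, however, a gap in the positive-liftability step. You assert that one can choose $w$ to be a $\kk$-rational point of the $\kk$-smooth fiber $\beta^{-1}(y)$. A nonempty smooth $\kk$-scheme of finite type need not have a $\kk$-rational point; in particular the fiber of a smooth morphism over $y$ can be a finite \'etale scheme such as $\Spec(\kk')$ for a proper finite extension $\kk'/\kk$, and the $(W,w)$ produced by Lemma~\ref{ref:equivRel:lem} typically has $\kappa(w)\supsetneq\kk$. The residue field of $w$ genuinely matters: if $\kappa(w)=\kk'\supsetneq\kk$, the formal-smoothness lift produces a map $\OO_{X,x}\to W(\kk')/p^\nu$ and the restriction-of-scalars construction yields a lift of $R\tensor_\kk\kk'$, not of $R$. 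The cleanest repair is to bypass $W$ altogether: the composite $W\to Y\to\Spec(\mathbb{Z}/p^\nu)$ is smooth and $W\to Y$ is smooth and surjective near $y$, so descent of smoothness along an fppf morphism shows that $Y$ itself is smooth over $\mathbb{Z}/p^\nu$ in a neighbourhood of $y$; then $\OO_{Y,y}$ is formally smooth over $\mathbb{Z}/p^\nu$ and one lifts $\OO_{Y,y}\onto\kk$ directly through $W(\kk)/p^\nu\onto\kk$. (A smaller quibble: the finiteness of $\kk$ "since it is a field of finite type over $\mathbb{Z}$" presupposes that $x$ is a closed point of $\Hilbsixteen$; residue fields at non-closed points of finite-type $\mathbb{Z}$-schemes can be, e.g., $\mathbb{F}_p(t)$. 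Closedness of $x$ ultimately traces back to Vakil's Proposition~4.4 producing a finite base field, which is worth saying.)
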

\begin{proof}
    Using Theorem~\ref{ref:mainthm:thm}, choose a member $(Y, y)$ in the
    singularity type of $\singula = [\Spec(\mathbb{Z}/p^{\nu})]$ such that there exists a
    retraction $(X, x)\to (Y, y)$ from an open subscheme $X$ of $\HshortZZ$.
    Let $\kk := \kappa(x) = \kappa(y)$ and $[R] := x$. Since $(Y, y)$ in the
    type of $\singula$ in the
    smooth equivalence relation, we have $p^{\nu}\OO_{Y, y} = 0$ and a morphism
    $\varphi\colon \Spec(\mathbb{Z}/p^{\nu}) \to Y$. Composing $\varphi$ with the
    section of $X \to Y$, we get a lifting of $R$ to $\Spec(\mathbb{Z}/p^{\nu})$.
    The scheme $R$ embeds into $\mathbb{A}^{16}_{\kk}$, hence its lifting over
    $A$ would embed into $\mathbb{A}^{16}_{A}$ and give a morphism $\Spec(A)
    \to \HshortZZ$, which (after perhaps localizing $A$) restricts to
    $\Spec(A)\to X$. Hence we obtain $\Spec(A) \to Y$ and so $p^{{\nu}}A = 0$.
\end{proof}

So far our arguments built upon Vakil's
construction, which in turn depends on Mn\"ev-Sturmfels
universality for incidence schemes~\cite{Vakil__Mnev_Sturmfels_universality} and on results about abelian
covers~\cite[\S4]{Vakil_MurphyLaw}.
The Mn\"ev-Sturmfels construction requires $\mathbb{P}^2_{\kk}$ to have enough
$\kk$-points, hence usually it does not work over $\kk = \mathbb{F}_p$ (this is the
reason why in Corollary~\ref{ref:liftingPathologies:cor} we do not obtain
algebras with residue field $\mathbb{F}_p$). The theory of abelian covers,
while in principle constructive, is not very prone to become explicit either.

In this final part we explicitly construct
appropriate points of the Hilbert scheme by hand,
bypassing Vakil's work, for several singularity types. First, we
note that one can obtain explicit examples of non-reduced
points on $\HshortZZ$ by taking a \TNTframe{} for the truncation of the cone
over a curve from Mumford's famous example~\cite[\S13]{HarDeform} or the
examples of Martin-Deschamps and Perrin~\cite{MartinDeschamps_Perrin}.
We give an explicit example by framing a degree $3$, genus $-2$ reducible curve
from~\cite[Prop~0.6]{MartinDeschamps_Perrin}.
\begin{example}
    Let $\kk$ be of characteristic zero. Let $S = \kk[x_1, \ldots , x_7, y_1,
    \ldots ,y_7]$ be a polynomial ring and $K = (x_1^2,\ x_1x_2,\ x_2^2(x_3 +
    x_4),\ x_1x_4^3 + x_2(x_3 + x_4)x_3^2)$. The regularity of $K$ is four.
    Let $\DI = K\cap (x_1, \ldots ,x_4)^4$ and let $J$ be a \TNTframe{} for
    $\DI$ with $a = 5$. Then $[J]\in \Hshortkk$ is non-reduced.
\end{example}

Below, we
give explicit components of
$\OHilb_{\pts}(\mathbb{A}^{6}_{\mathbb{Z}})$ lying in characteristic $p$ for small $p$; in fact we give
$\mathbb{F}_p$-points of these components. The proof is obtained by
replacing the construction of Theorem~\ref{ref:mainthm:thm} by some explicit
computations. Let $\kk = \mathbb{F}_p$ and let $\DR \subset \mathbb{A}^n_{\kk}$ be a finite
scheme given by a homogeneous ideal. The examples below employ the
following line of argument:
\begin{enumerate}
    \item\label{it:homogeneous} check that $\dim(\GL_{n}\cdot [\DR]) =
        \dim_{\kk} T_{\Hshortkk^{\Gmult}, [\DR]}$,
    \item conclude that $[\DR]\in \Hshortkk^{\Gmult}$ is smooth,
    \item\label{it:W2} verify that $\DR$ does not lift to $W_2(\kk)$.
    \item use~\ref{it:homogeneous}-\ref{it:W2} to conclude that the
        component of $\HshortZZ^{\Gmult}$ containing $[\DR]$ lies entirely over
        $\kk$. This is a known argument, see
        e.g.~\cite[Lemma~5.7]{Ekedahl_On_nonliftable_CY_threefolds}. If this
        holds, also a neighbourhood of $[\DR]\in
        \HplusZZ$ lies over $\kk$.
    \item\label{it:TNT} check that $\DR$ has TNT and conclude that
        $\theta\colon\HplusZZ \to
        \HshortZZ$ is an open immersion in a neighbourhood of $[\DR]$.
\end{enumerate}
The heart of all examples is the observation that the ideal
\[
    K = (x_1x_2 + x_3x_4 + x_5x_6) + (x_2^p, x_4^p, x_6^p)
\]
satisfies Properties~\eqref{it:homogeneous}-\eqref{it:W2}. It remains to reduce $K$ to
dimension zero so as not to lose these properties and additionally gain TNT.
We present one such reduction below.

\begin{example}
    Let $q = p^e$ be a prime power, let $\kk = \mathbb{F}_p$ as before and consider the ideal $I' = (x_1x_2 +
    x_3x_4 + x_5x_6) + (x_2^q, x_4^q, x_6^q) \subset \DS = \kk[x_1, \ldots
    , x_6]$. Let $I$ be its saturation and let
    $\DJJ = I + (x_1, x_3, x_5)^{q+1}$.
    Below all unjustified claims are checked with \texttt{Macaulay2} for $q =
    3, 4, 5$. Hence, we obtain examples in characteristics $\leq 5$.
    First, the stabilizer of $\DJJ$ is $10$-dimensional, given by
    \begin{equation}\label{eq:stabilizerI}
        \begin{pmatrix}
            \lambda D^{-1} & 0\\
            0 & D^T
        \end{pmatrix},
    \end{equation}
    where $\lambda\in \kk^*$ and  $D\in \GL_{3}$.
    Hence the $\GL_6$-orbit is $26$-dimensional.

    \newcommand{\DSWtwo}{\widetilde{S}}%
    \newcommand{\DJWtwo}{\widetilde{J}}%
    To prove that $\DJJ$ does not lift to $W_2(\kk)$, we argue similarly as
    in~\cite[Proposition~6.1.1]{Zdanowicz_liftability_mod_p2}.
    Let $\DSWtwo := W_2(\kk)[X_1, \ldots ,X_6]$. Suppose that $\DR$ lifts to
    $W_2(\kk)$. Then there exists an ideal $\DJWtwo \subset \DSWtwo$ such that
    $\DSWtwo/\DJWtwo$ is an embedded deformation of $\DS/\DJJ$ over
    $W_2(\kk)$.
    In particular, the syzygy
    \[
        (x_1x_2 + x_3x_4 + x_5x_6)^3 = (x_1)^3\cdot x_2^3 + (x_3)^3\cdot x_4^3
        + (x_5)^3\cdot x_6^3
    \]
    lifts to a syzygy between generators of $\DJWtwo$, which means that
    \begin{equation}\label{eq:syzygy}
        (X_1X_2 + X_3X_4 + X_5X_6)^3 - X_1^3 X_2^3 - X_3^3 X_4^3
        - X_5^3X_6^3 \in p\DJWtwo.
    \end{equation}
    We have $p\DSWtwo  \simeq \DS$ as $\DSWtwo$-modules and $p\DJWtwo \simeq
    \DJJ$ in this isomorphism. Equation~\eqref{eq:syzygy} translates into
    \begin{equation}\label{eq:syzygytwo}
        2x_1x_2x_3x_4x_5x_6 + (x_1x_2)^2(x_3x_4+x_5x_6) + (x_3x_4)^2(x_1x_2 +
        x_5x_6) + (x_5x_6)^2(x_1x_2 + x_3x_4)\in \DJJ,
    \end{equation}
    But $(x_1x_2)^2(x_3x_4+x_5x_6) \equiv (x_1x_2)^3 \equiv 0 \mod \DJJ$,
    hence~\eqref{eq:syzygytwo} is equivalent to $2x_1x_2x_3x_4x_5x_6\in \DJJ$,
    which is false. Hence, we obtain a contradiction (the argument for $q = 4$
    should be ramified here).
    Finally, we check directly that for $\tanspace := \Hom_{\DS}(\DJJ, \DS/\DJJ)$ we
    have $\dim_{\kk} (\tanspace)_0 = 26$, which verifies Property~\ref{it:homogeneous} and that
    $\dim_{\kk} (\tanspace)_{<0} = 6$, which verifies Property~\ref{it:TNT}.
\end{example}

\small


\begin{thebibliography}{{Ame}10}

\bibitem[{Ame}10]{aimpl}
{American Institute of Mathematics Problem List}.
\newblock Components of {H}ilbert schemes.
\newblock {\em Available at http://aimpl.org/hilbertschemes}, 2010.

\bibitem[Art76]{artin_deform_of_sings}
Michael Artin.
\newblock {\em Deformations of singularities}.
\newblock Notes by C.S. Seshadri and Allen Tannenbaum. Tata Institute of
  Fundamental Research, Bombay, India, 1976.

\bibitem[BB73]{BialynickiBirula__decomposition}
A.~Bia{\l{}}ynicki-Birula.
\newblock Some theorems on actions of algebraic groups.
\newblock {\em Ann. of Math. (2)}, 98:480--497, 1973.

\bibitem[BB14]{nisiabu_jabu_cactus}
Weronika Buczy{\'n}ska and Jaros{\l}aw Buczy{\'n}ski.
\newblock Secant varieties to high degree {V}eronese reembeddings,
  catalecticant matrices and smoothable {G}orenstein schemes.
\newblock {\em J. Algebraic Geom.}, 23:63--90, 2014.

\bibitem[BBS13]{Bryan_Behrend_Szendroi}
Kai Behrend, Jim Bryan, and Bal\'azs Szendr{\H{o}}i.
\newblock Motivic degree zero {D}onaldson-{T}homas invariants.
\newblock {\em Invent. Math.}, 192(1):111--160, 2013.

\bibitem[BCR17]{Bertone__double_generic}
Cristina Bertone, Francesca Cioffi, and Margherita Roggero.
\newblock Double-generic initial ideal and {H}ilbert scheme.
\newblock {\em Ann. Mat. Pura Appl. (4)}, 196(1):19--41, 2017.

\bibitem[Bha12]{Bhatt1}
Bhargav Bhatt.
\newblock p-adic derived de {R}ham cohomology.
\newblock arXiv:1204.6560, 2012.

\bibitem[BO78]{Ogus}
Pierre Berthelot and Arthur Ogus.
\newblock {\em Notes on crystalline cohomology}.
\newblock Princeton University Press, Princeton, N.J.; University of Tokyo
  Press, Tokyo, 1978.

\bibitem[CEVV09]{CEVV}
Dustin~A. Cartwright, Daniel Erman, Mauricio Velasco, and Bianca Viray.
\newblock Hilbert schemes of 8 points.
\newblock {\em Algebra Number Theory}, 3(7):763--795, 2009.

\bibitem[{Dri}13]{drinfeld}
Vladimir {Drinfeld}.
\newblock On algebraic spaces with an action of ${G}_m$.
\newblock arXiv:1308.2604., 2013.

\bibitem[DS09]{Szendroi_Dimca}
Alexandru Dimca and Bal{\'a}zs Szendr{\H{o}}i.
\newblock The {M}ilnor fibre of the {P}faffian and the {H}ilbert scheme of four
  points on {$\mathbb C\sp 3$}.
\newblock {\em Math. Res. Lett.}, 16(6):1037--1055, 2009.

\bibitem[Eis95]{Eisenbud}
David Eisenbud.
\newblock {\em Commutative algebra}, volume 150 of {\em Graduate Texts in
  Mathematics}.
\newblock Springer-Verlag, New York, 1995.
\newblock With a view toward algebraic geometry.

\bibitem[Eke04]{Ekedahl_On_nonliftable_CY_threefolds}
Torsten Ekedahl.
\newblock On non-liftable {C}alabi-{Y}au threefolds.
\newblock arxiv:math/0306435v2, 2004.

\bibitem[Erm12]{erman_Murphys_law_for_punctual_Hilb}
Daniel Erman.
\newblock Murphy's law for {H}ilbert function strata in the {H}ilbert scheme of
  points.
\newblock {\em Math. Res. Lett.}, 19(6):1277--1281, 2012.

\bibitem[FH91]{fultonharris}
William Fulton and Joe Harris.
\newblock {\em Representation theory}, volume 129 of {\em Graduate Texts in
  Mathematics}.
\newblock Springer-Verlag, New York, 1991.
\newblock A first course, Readings in Mathematics.

\bibitem[Fog68]{fogarty}
John Fogarty.
\newblock Algebraic families on an algebraic surface.
\newblock {\em Amer. J. Math}, 90:511--521, 1968.

\bibitem[Har10]{HarDeform}
Robin Hartshorne.
\newblock {\em Deformation theory}, volume 257 of {\em Graduate Texts in
  Mathematics}.
\newblock Springer, New York, 2010.

\bibitem[HS04]{Haiman_Sturmfels__multigraded}
Mark Haiman and Bernd Sturmfels.
\newblock Multigraded {H}ilbert schemes.
\newblock {\em J. Algebraic Geom.}, 13(4):725--769, 2004.

\bibitem[Jel19]{Jelisiejew__Elementary}
Joachim Jelisiejew.
\newblock Elementary components of {H}ilbert schemes of points.
\newblock {\em Journal of the {L}ondon {M}athematical {S}ociety},
  100(1):249--272, 2019.

\bibitem[JS19]{jelisiejew_sienkiewicz__BB}
Joachim Jelisiejew and {\L{}}ukasz Sienkiewicz.
\newblock {B}ia{\l{}}ynicki-{B}irula decomposition for reductive groups.
\newblock {\em Journal de Mathématiques Pures et Appliquées}, 131:290 -- 325,
  2019.

\bibitem[Lan12]{Landsberg__book}
J.~M. Landsberg.
\newblock {\em Tensors: geometry and applications}, volume 128 of {\em Graduate
  Studies in Mathematics}.
\newblock American Mathematical Society, Providence, RI, 2012.

\bibitem[Lan18]{Langer__zerodimensional}
Adrian Langer.
\newblock Lifting zero-dimensional schemes and divided powers.
\newblock {\em Bulletin of the London Mathematical Society}, 50(3):449--461,
  2018.

\bibitem[LV13]{Vakil__Mnev_Sturmfels_universality}
Seok~Hyeong Lee and Ravi Vakil.
\newblock Mn\"{e}v-{S}turmfels universality for schemes.
\newblock In {\em A celebration of algebraic geometry}, volume~18 of {\em Clay
  Math. Proc.}, pages 457--468. Amer. Math. Soc., Providence, RI, 2013.

\bibitem[Mat86]{Matsumura_CommRing}
Hideyuki Matsumura.
\newblock {\em Commutative ring theory}, volume~8 of {\em Cambridge Studies in
  Advanced Mathematics}.
\newblock Cambridge University Press, Cambridge, 1986.
\newblock Translated from the Japanese by M. Reid.

\bibitem[MDP96]{MartinDeschamps_Perrin}
Mireille Martin-Deschamps and Daniel Perrin.
\newblock Le sch\'{e}ma de {H}ilbert des courbes gauches localement
  {C}ohen-{M}acaulay n'est (presque) jamais r\'{e}duit.
\newblock {\em Ann. Sci. \'{E}cole Norm. Sup. (4)}, 29(6):757--785, 1996.

\bibitem[MS05]{Miller_Sturmfels}
Ezra Miller and Bernd Sturmfels.
\newblock {\em Combinatorial commutative algebra}, volume 227 of {\em Graduate
  Texts in Mathematics}.
\newblock Springer-Verlag, New York, 2005.

\bibitem[Poo08]{poonen_moduli_space}
Bjorn Poonen.
\newblock The moduli space of commutative algebras of finite rank.
\newblock {\em J. Eur. Math. Soc. (JEMS)}, 10(3):817--836, 2008.

\bibitem[RA16]{roggero_albert}
Margherita Roggero and Mario Albert.
\newblock private communication, 2016.

\bibitem[sta17]{stacks_project}
{S}tacks {P}roject.
\newblock \url{http://math.columbia.edu/algebraic_geometry/stacks-git}, 2017.

\bibitem[Vak06]{Vakil_MurphyLaw}
Ravi Vakil.
\newblock Murphy's law in algebraic geometry: badly-behaved deformation spaces.
\newblock {\em Invent. Math.}, 164(3):569--590, 2006.

\bibitem[Zda18]{Zdanowicz_liftability_mod_p2}
Maciej Zdanowicz.
\newblock Liftability of {S}ingularities and {T}heir {F}robenius {M}orphism
  {M}odulo {$p^2$}.
\newblock {\em Int. Math. Res. Not. IMRN}, (14):4513--4577, 2018.

\end{thebibliography}
\end{document}